\documentclass[12pt, English,titlepage]{article}

\usepackage{epsfig}
\usepackage{setspace}
\usepackage{ifthen}
\usepackage{graphicx}
\usepackage{amsmath}
\usepackage{amsfonts}
\usepackage{amssymb}
\usepackage{xypic}
\usepackage{lscape}
\usepackage{amsthm}
\usepackage{xy}
\usepackage{yhmath}
\usepackage{lscape}
\usepackage{amscd, bbold}
\usepackage{paralist}
\usepackage{graphics} 
\usepackage{epsfig} 


\usepackage[colorlinks=true]{hyperref}

\setlength{\textwidth}{6.2in} \setlength{\textheight}{8.6in}
\setlength{\topmargin}{-0.1in} \setlength{\oddsidemargin}{0in}
\setlength{\parskip}{2mm} \setstretch{1.5}
\def\br#1{\left(#1\right)}

\newtheorem{theorem}{Theorem}

\newtheorem{corollary}[theorem]{Corollary}

\newtheorem{definition}[theorem]{Definition}
\newtheorem{ex}{Example}[section]

\newtheorem{lemma}[theorem]{Lemma}
\newtheorem{notation}[theorem]{Notation}

\newtheorem{proposition}[theorem]{Proposition}
\newtheorem{remark}[theorem]{Remark}

\def\br#1{\left(#1\right)}

\def\<#1,#2>{\langle #1,#2 \rangle}

 \def\bothID{\rlap{\hbox to.97\wd0{\hss\vrule height.06\ht0 width.82\wd0}}
 \copy0\rlap{\kern-.36\wd0\vrule height1.05\ht0 width.05\ht0}\kern.14\wd0}


\DeclareMathOperator{\diag}{diag} 

 
\DeclareMathOperator{\dist}{dist} 
\DeclareMathOperator{\diam}{diam} \DeclareMathOperator{\spec}{spec}
 \DeclareMathOperator{\ess}{ess}
\DeclareMathOperator{\vol}{vol} 
 
\DeclareMathOperator{\thick}{thick} 

\DeclareMathOperator{\dvol}{dvol} 
\DeclareMathOperator{\grad}{grad} \DeclareMathOperator{\exte}{ext}
\DeclareMathOperator{\inte}{int} \DeclareMathOperator{\Hom}{Hom}
 \DeclareMathOperator{\Spin}{Spin}
\DeclareMathOperator{\Cl}{Cl} \DeclareMathOperator{\SO}{SO}
\DeclareMathOperator{\tr}{tr} 
\DeclareMathOperator{\dil}{dil} \DeclareMathOperator{\APS}{APS}

\begin{document}

\title{Dirac Cohomology on Manifolds with Boundary and Spectral Lower Bounds}

\author{Simone Farinelli\\
        Aum\"ulistrasse 20\\
        CH-8906 Bonstetten\\
        Email: simone.farinelli@alumni.ethz.ch
        }

\maketitle

\begin{abstract}
Along the lines of the classic Hodge-De Rham theory a general
decomposition theorem for sections of a Dirac bundle over a compact
Riemannian manifold is proved by extending concepts as exterior
derivative and coderivative as well as as elliptic absolute and
relative boundary conditions for both Dirac and Dirac Laplacian
operators. Dirac sections are shown to be a direct sum of harmonic,
exact and coexact spinors satisfying alternatively absolute and
relative boundary conditions. Cheeger's estimation technique for
spectral lower bounds of the Laplacian on differential forms is
generalized to the Dirac Laplacian. A general method allowing to
estimate Dirac spectral lower bounds for the Dirac spectrum of a
compact Riemannian manifold in terms of the Dirac eigenvalues for a
cover of $0$-codimensional submanifolds is developed. Two applications
are provided for the Atiyah-Singer operator. First, we prove the existence
on compact connected spin manifolds of Riemannian metrics of unit volume
with arbitrarily large first non zero eigenvalue, which is an already known result. Second, we prove that
on a degenerating sequence of oriented, hyperbolic, three spin
manifolds for any choice of the spin structures the first positive non zero eigenvalue
is bounded from below by a positive uniform constant, which improves an already known result.
\end{abstract}
\noindent \text{Classification: }
 Primary: 58J50; Secondary: 53C27.\\
\text{Keywords: }
Dirac Operator, Spectral Lower Bounds, Dirac Cohomology.

\tableofcontents 

\section{Introduction}
When dealing with direct and indirect spectral theory on Riemannian
manifolds, the following question naturally arises. Given a formally
selfadjoint operator of Laplace type (or Laplacian for short) over a
compact Riemannian manifold, consider a finite open cover or a
decomposition into $0$-codimensional submanifolds with boundary and
add an appropriate elliptic boundary condition. Is there a general
principle allowing to find lower bounds of the spectrum of the
manifold in terms of the spectra of the pieces?\par To our knowledge the only answer
to this question known so far is a dissection principle, known also as domain monotonicity, which was originally
formulated for the Laplacian on functions on domains in
$\mathbf{R}^m$ by Courant and Hilbert (\cite{CoHi93} and
\cite{Cha84}). The remarkable fact is that it still holds for any
formally selfadjoint operator of Laplace type under Neumann boundary
conditions, as recognized for the first time by B\"ar \cite{Ba91} for the Dirac
Laplacian.\par

The main contribution of this paper is a new technique allowing to
estimate the lower spectral part of a general Dirac operator in
terms of the spectra of a finite cover under the appropriate
boundary conditions. The original idea in the case of differential
forms is due to Cheeger but unpublished, based on the Mayer-Vietoris
scheme, was  carried out in \cite{Go93}. In order to extend it to
the set up of Dirac bundles, a Dirac complex as in non commutative
differential geometry is introduced, as well as appropriate elliptic
local boundary conditions for both Dirac and
 Dirac Laplacian. Concepts like derivation and coderivation and
boundary conditions like  absolute and relative ones can be extended
from the context of differential forms  to that of Dirac sections.
\par If $(V,\left<\cdot,\cdot\right>,\nabla,\gamma)$ is a Dirac bundle over
the Riemannian manifold $(M,g)$, where $M$ is compact with boundary,
and if there exists a bundle isomorphism $T$ on $V$ anticommuting
 with $\gamma$ and with
the Dirac operator $Q$, for which $T^2=\mathbb{1}$, then, the tuple
$(V,\left<\cdot,\cdot\right>,\nabla,\gamma, \overline{\gamma})$ where $\overline{\gamma}:=iT\gamma$ defines a
$(1,1)$-Dirac bundle structure with corresponding Dirac operators
$Q$ and $\overline{Q}$. The operators
\begin{equation}
d:=\frac{1}{2}(Q-i\overline{Q})=
\frac{\mathbb{1}+T}{2}Q\quad\text{and}\quad
\delta:=\frac{1}{2}(Q+i\overline{Q})=\frac{\mathbb{1}-T}{2}Q
\end{equation}
can be seen derivative and coderivative on M, while the zero-order
boundary operators
\begin{equation}
B_{\pm}:=\frac{\mathbb{1}\mp T\gamma(\nu)}{2}
\end{equation}
play the role of the absolute ($B_{-}$) and relative ($B_{+}$)
boundary conditions on $\partial M$ for the Dirac operator $Q$. The
Dirac Laplacian can be decomposed as
\begin{equation}Q^2=d\delta+\delta d\end{equation} and the corresponding first
order boundary operators read as:
\begin{equation}B_{-}\oplus B_{-}d\quad \text{(absolute) }\quad
B_{+}\oplus B_{+}\delta \quad \text{(relative).} \end{equation}

\begin{theorem}[Orthogonal Decomposition of Dirac Sections]\label{Hodge0}
Let $(V,\left<\cdot,\cdot\right>,\nabla,\gamma)$ be a Dirac bundle over the compact
Riemannian manifold with boundary $(M,g)$ admitting a bundle isomorphism $T$  anticommuting with $\gamma$ and with
the Dirac operator $Q$, such that $T^2=\mathbb{1}$ holds. Let $C^{\infty}(M,V)$ denote the Dirac
spinors, i.e. the differentiable sections of $V$,
$\mathcal{H}_{B_{\pm}}(M,V)$ the harmonic,
$\Omega^d_{B_{\pm}}(M,V)$ the exact and
$\Omega^{\delta}_{B_{\mp}}(M,V)$ the coexact Dirac sections
satisfying the absolute ($B_-$) and the relative ($B_+$) boundary
conditions. Then, the following orthogonal decomposition holds:
\begin{equation}C^{\infty}(M,V)=\mathcal{H}_{B_{\pm}}(M,V)\oplus\Omega^d_{B_{\pm}}(M,V)\oplus\Omega^{\delta}_{B_{\mp}}(M,V).\end{equation}
\end{theorem}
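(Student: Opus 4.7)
My plan is to adapt the classical Hodge--De Rham argument on compact manifolds with boundary to the present Dirac setting, exploiting the algebraic structure produced by the involution $T$. A preliminary algebraic step records that $(\mathbb{1}\pm T)/2$ are complementary orthogonal projections (since $T$ is self-adjoint with $T^2=\mathbb{1}$) that anticommute with $Q$, from which $d^2=\delta^2=0$, $Q=d+\delta$, and $Q^2=d\delta+\delta d$ follow immediately. Similarly, $T\gamma(\nu)$ is an involution on $V|_{\partial M}$ (using $T\gamma=-\gamma T$ and $\gamma(\nu)^2=-\mathbb{1}$ on the unit normal), so $B_\pm$ are complementary orthogonal projections on the boundary bundle.

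The central analytic input is a Green's formula for the pair $(d,\delta)$: starting from the standard Green's formula for $Q$ with boundary integrand of the form $\langle\gamma(\nu)\sigma,\tau\rangle$, and substituting $d=\tfrac{\mathbb{1}+T}{2}Q$ and $\delta=\tfrac{\mathbb{1}-T}{2}Q$, one obtains a boundary integrand that can be rewritten in terms of $B_\pm$ via the intertwining $T\gamma(\nu)=-\gamma(\nu)T$. The key consequence is that the boundary term vanishes whenever either $\sigma$ satisfies $B_-$ or $\tau$ satisfies $B_+$ (and symmetrically), so $d$ restricted by $B_\pm$ and $\delta$ restricted by $B_\mp$ are mutual formal adjoints, and the cross pairings $\langle d\alpha,\delta\beta\rangle$ vanish when the appropriate conditions are imposed (also using $d^2=\delta^2=0$).

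Granting that $(Q^2;B_\pm\oplus B_\pm d)$ and $(Q^2;B_\mp\oplus B_\mp\delta)$ are elliptic self-adjoint boundary value problems --- a fact I would take as established earlier in the paper via verification of the Shapiro--Lopatinsky condition for $B_\pm$ relative to $Q$ --- standard elliptic theory produces a finite-dimensional smooth kernel $\mathcal{H}_{B_\pm}(M,V)=\ker(Q^2_{BC})$ together with an $L^2$-orthogonal decomposition $L^2(M,V)=\mathcal{H}_{B_\pm}(M,V)\oplus\mathrm{Im}(Q^2_{BC})$. Since the image equals $d\delta(\mathrm{dom})+\delta d(\mathrm{dom})\subset\mathrm{Im}(d)+\mathrm{Im}(\delta)$ and the two summands are $L^2$-orthogonal by Green's formula, elliptic regularity descends the decomposition to $C^\infty(M,V)$ and identifies the summands with $\Omega^d_{B_\pm}(M,V)$ and $\Omega^\delta_{B_\mp}(M,V)$.

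The main obstacle, in my view, is not the functional-analytic skeleton, which is by now classical, but the careful intertwining of the boundary projectors with $d$ and $\delta$. Specifically, one must show that the boundary condition on $\alpha$ is preserved by $d$ in the sense required by the first-order problem $B_\pm\oplus B_\pm d$, and dually for $\delta$; and one must verify that the ranges produced by the abstract Hodge--De Rham machinery coincide exactly with the smooth exact and coexact spaces as defined in the statement. Both points reduce to commutation identities among $T$, $\gamma(\nu)$, $Q$, and the projections $B_\pm$, but require careful bookkeeping near $\partial M$, where the boundary condition imposes constraints of different orders on a section and on its $d$- or $\delta$-image.
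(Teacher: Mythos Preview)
Your proposal is correct and follows essentially the same approach as the paper. The paper's own proof is a single sentence invoking ``standard elliptic operator theory and the fact that derivative and coderivative operators preserve the absolute and the relative, respectively, boundary condition''; the ingredients you outline --- the algebraic identities for $d$, $\delta$, and $B_\pm$, Green's formula for the pair $(d,\delta)$, ellipticity and self-adjointness of $(Q^2,C_\pm)$, and the boundary-preservation lemma for $d$ and $\delta$ --- are exactly those established in the propositions immediately preceding the theorem, so your sketch is a faithful (and more detailed) expansion of that terse argument.
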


\noindent This theorem generalizes Morrey's Theorem (cf. \cite{Mo56}
and \cite{Sc95}) for differential forms on manifold under the
relative or absolute boundary condition. By using this Hodge-De
Rham-like decomposition theorem a variational characterization of
the Dirac spectrum in terms of the Dirac spectrum on exact Dirac
sections can be derived. This is the technical result needed to
prove the following:

\begin{theorem}[Spectral Lower Bounds by Dissection]\label{Cheeger}
Let $(V,\left<\cdot,\cdot\right>,\nabla,\gamma)$ be a Dirac bundle over the
compact Riemannian manifold without boundary $(M,g)$ with Dirac operator $Q$. Assume the
existence  of a a bundle isomorphism $T$ on $V$ anticommuting with
$\gamma$ and with $Q$, such that $T^2=\mathbb{1}$ holds. Let $(U_j)_{j=0}^K$ be a collection of
closed sets whose interiors cover $M$. Choose and fix
$(\rho_j)_{j=1}^K$ a subordinate partition of unity and set
\begin{equation}
 \begin{split}
 U_{\alpha_0,\alpha_1,\dots,\alpha_k}&:=\bigcap_{i\in\{\alpha_0,\dots,\alpha_k\}}U_i\\
 N_1&:=\sum_{i,j=0}^K\dim\mathcal{H}_{B_{-}}(U_{i,j},V)\\
 N_2&:=\sum_{i,j,k=0}^K\dim\mathcal{H}_{B_{-}}(U_{i,j,k},V)\\
 N&:=N_1+N_2+1\\
 m_i&:=|\{j\neq i|\,U_j\cap U_i\neq\emptyset\}|\\
 C_{\rho}&:=\frac{1}{2}\max_{0\le i \le K}\sup_{x\in U_i} |\nabla
 \rho_i(x)|^2
 \end{split}
\end{equation}
\noindent For any closed set $U\subset M$ let $\lambda(U)$ denote
the smallest positive eigenvalue of the Dirac operator on exact
Dirac sections satisfying the absolute boundary condition $B_{-}$ on
$\partial U$. Then, the $N$-th positive eigenvalue of the Dirac
operator over $M$ has the following positive lower bound:
\begin{equation}
  \lambda_N(Q)\ge
  \frac{1}{\sqrt{\sum_{i=0}^K\br{
                        \frac{1}{\lambda^2(U_i)}
                        +4\sum_{j=0}^{m_i}\br{
                                           \frac{C_{\rho}}{\lambda^2(U_{ij})}
                                           +1}
                                           \br{\frac{1}{\lambda^2(U_i)}
                                           +\frac{1}{\lambda^2(U_j)}}
                        }
           }}.
\end{equation}
\end{theorem}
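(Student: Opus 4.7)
The strategy is to adapt the Mayer--Vietoris dissection scheme of Cheeger, carried out for differential forms in~\cite{Go93}, to the Dirac setting, using the Hodge-type decomposition of Theorem~\ref{Hodge0} as the analytic backbone. First I would reduce the problem to an estimate on exact Dirac sections: Theorem~\ref{Hodge0}, combined with the identities $Q(\Omega^d)\subset\Omega^\delta$ and $Q(\Omega^\delta)\subset\Omega^d$, shows that $\lambda_N(Q)^2$ equals the $N$-th eigenvalue of $Q^2=d\delta$ on $\Omega^d_{B_-}(M,V)$; moreover, for any exact $\psi=d\omega$ the identity $\|\psi\|^2=\<\psi,d\omega>=\<\delta\psi,\omega>\le\|Q\psi\|\,\|\omega\|$ reduces the sought lower bound, via Courant--Fischer, to constructing on a subspace of exact sections of codimension at most $N-1$ a $d$-primitive $\omega$ of $\psi$ whose $L^2$ norm is controlled by $\|\psi\|$ with the constant claimed in the theorem.

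Next comes the Mayer--Vietoris construction of such a primitive. Restricting $\psi$ to each $U_i$ and applying Theorem~\ref{Hodge0} on $U_i$ produces a coexact local primitive $\omega_i\in\Omega^\delta_{B_+}(U_i,V)$ with $d\omega_i=\psi|_{U_i}$ and $\|\omega_i\|\le\lambda(U_i)^{-1}\|\psi|_{U_i}\|$. On a double overlap $U_{ij}$ the difference $\omega_i-\omega_j$ is $d$-closed, hence by Theorem~\ref{Hodge0} on $U_{ij}$ decomposes as a harmonic piece in $\mathcal{H}_{B_-}(U_{ij},V)$ plus a term $d\omega_{ij}$ with $\|\omega_{ij}\|\le\lambda(U_{ij})^{-1}\|\omega_i-\omega_j\|$; the harmonic pieces assemble into a \v{C}ech $1$-cocycle, a coboundary modulo the $N_2$-dimensional harmonic obstruction on the triple intersections $U_{ijk}$. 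Projecting orthogonally to the $N_1+N_2$-dimensional total obstruction space, the modified local primitives can be reassembled into a global primitive via the partition-of-unity gluing $\omega:=\sum_i\rho_i\omega_i$; the residual Clifford error $d\omega-\psi=\sum_i\gamma(\grad\rho_i)\omega_i$, pointwise dominated by $C_\rho$, is absorbed using a second round of intersection corrections, and this is where the combined factor $C_\rho/\lambda^2(U_{ij})+1$ enters the bound.

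Combining the local norm estimates via Cauchy--Schwarz, summing over the $m_i$ neighbours of each $U_i$ and then over $i$, yields precisely the denominator in the stated bound. The main obstacle will be the Mayer--Vietoris bookkeeping itself: one has to show that the compatibility obstructions at each stage really lie in $\mathcal{H}_{B_-}$ with the specified absolute boundary condition---particularly at the corners where $\partial U_i\cap\partial M\ne\emptyset$ and the cut-offs $\rho_i$ do not respect $B_\pm$---and that the \v{C}ech $1$-coboundary obstruction is captured by $\sum\dim\mathcal{H}_{B_-}(U_{ijk},V)$ rather than by a larger quantity. A second delicate point is the Cauchy--Schwarz tracking: recovering the precise combinatorial structure of the bound, with its factor of $4$ and the explicit product $(C_\rho/\lambda^2(U_{ij})+1)(1/\lambda^2(U_i)+1/\lambda^2(U_j))$, rather than a cruder form, will require organising the estimates with some care.
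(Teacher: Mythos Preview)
Your high-level strategy matches the paper's exactly: reduce to a primitive-norm estimate for exact sections via the variational characterisation, restrict to local primitives, control differences on overlaps by killing harmonic obstructions of total dimension $N_1$, pass to secondary primitives $\eta_{ij}$, kill the triple-overlap obstruction of dimension $N_2$, and sum via Cauchy--Schwarz. The obstacle you flag about corners and boundary conditions is real but not the main difficulty.

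The genuine gap is in your gluing step. The formula $\omega:=\sum_i\rho_i\omega_i$ does \emph{not} produce a global primitive of $\psi$, and the error $d\omega-\psi$ cannot be ``absorbed using a second round of intersection corrections'' as you suggest: that error, while closed and supported on the overlaps, has no reason to be exact with a primitive of controlled norm unless you spend another $N_1$ dimensions of harmonic obstruction, which would spoil the value of $N$. The correct mechanism---and this is what the paper does, following the standard \v{C}ech--de Rham zig-zag---is to apply the partition of unity one level deeper, to the \emph{secondary} primitives: set $\tau_i:=\sum_j\rho_j\eta_{ij}$ and replace each local primitive by $\overline{\psi}_i:=\omega_i-d\tau_i$. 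Once the triple-overlap obstructions $\nu_{ijk}=s\{\eta_{ij}\}$ have been killed (this is where $N_2$ enters), one has $s\{\overline{\psi}_i\}=0$ exactly, so the $\overline{\psi}_i$ patch to a global section $\overline{\psi}$ with $d\overline{\psi}=\psi$ on the nose; no residual error remains to absorb.

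This also explains the shape of the constant. The factor $C_\rho/\lambda^2(U_{ij})+1$ does not come from bounding a Clifford error in $d\omega-\psi$; it comes from the Leibniz rule applied to $d\tau_i=d\bigl(\sum_j\rho_j\eta_{ij}\bigr)$, which produces a gradient term $\tfrac{\gamma-i\tilde\gamma}{2}(\grad\rho_j)\eta_{ij}$ controlled by $C_\rho\|\eta_{ij}\|^2\le C_\rho\|d\eta_{ij}\|^2/\lambda^2(U_{ij})$ and a term $\rho_j\,d\eta_{ij}$ controlled by $\|d\eta_{ij}\|^2$; the ``$+1$'' is the second of these. Then $\|d\eta_{ij}\|^2=\|\omega_i-\omega_j\|^2\le 2(\|\omega_i\|^2+\|\omega_j\|^2)$ together with $\|\omega_i\|^2\le\|\psi\|^2/\lambda^2(U_i)$ gives exactly the product structure and the factor $4$ in the statement.
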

\noindent This is the generalization Cheeger's technique for the
Laplacian on differential forms (cf. \cite{Go93}). The lower
spectral bound method found can be applied to prove the new results
introduced by the following two subsections.

\subsection{Large First Eigenvalues}
Let $(M, g)$ be a compact, connected $n$ dimensional Riemannian
manifold. and $\lambda_1(\Delta_p^g))$ the smallest positive
eigenvalue of the Laplacian on $p$ forms. Hersch (\cite{Her70})
proved, that for functions on the sphere $S^2$ we have
\begin{equation}
\lambda_1(\Delta_0^g)\text{Vol}(S^2,g)\le 8\pi
\end{equation}
\noindent for every Riemannian metric $g$. In connection with this
result, Berger (\cite{Ber73}) asked whether there exists a constant
$k(M)$ such that
\begin{equation}\label{ineqlam}
\lambda_1(\Delta_0^g)\text{Vol}(M,g)^{\frac{2}{m}}\le k(M)
\end{equation}
for any Riemannian metric $g$ on a manifold $M$ of dimension $m$. Yang and Yau \cite{YaYa80}
proved that the inequality above holds for a compact surface $S$ of
genus $\Gamma$ with $k(S)=8\pi(\Gamma+1)$. Later, Bleecker
(\cite{Ble83}), Urakawa (\cite{Ura79}) and others constructed
examples of manifolds of dimension $m>3$ for which the inequality
(\ref{ineqlam}) is false.  Xu (\cite{Xu92}), and Colbois and Dodziuk
(\cite{CoDo94}) showed that  inequality (\ref{ineqlam}) is false for
every Riemannian manifold of dimension $m > 3$.  Tanno (\cite{Ta83})
posed the analogous question for forms of degree $p$, if there exist
a constant $k(M)$ such that
\begin{equation}\label{ineqForms}
\lambda_1(\Delta_p^g)\text{Vol}(M,g)^{\frac{2}{m}}\le k(M)
\end{equation}
for any Riemannian metric g on $M$. Pagliara and Gentile
(\cite{GP95}) showed that inequality (\ref{ineqForms}) is false for
$m>4$ and $2<p<m-2$. We can adapt now their proof to show

\begin{theorem}\label{Berger}
Every compact connected spin manifold $M$ of dimension $m\ge2$ without boundary admits for a given spin
structure $s$ metrics $g$ of volume one with arbitrarily large first
non zero Atiyah-Singer operator eigenvalue $\lambda_1(D^{(M,g)}_s)$.
\end{theorem}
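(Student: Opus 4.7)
I would mimic the Gentile--Pagliara construction \cite{GP95} for the Hodge Laplacian on forms, invoking Theorem \ref{Cheeger} in place of its form analogue. Given $\Lambda > 0$, the target is a Riemannian metric $g_\Lambda$ on $M$ with $\text{Vol}(M, g_\Lambda) = 1$ and $\lambda_1(D^{(M,g_\Lambda)}_s) \geq \Lambda$. In order to bound $\lambda_1$ directly from Theorem \ref{Cheeger} one needs $N = 1$, i.e.\ $N_1 = N_2 = 0$; for this I would choose a finite closed cover $\{U_j\}_{j=0}^{K}$ of $M$, extracted from a sufficiently fine smooth triangulation with combinatorial multiplicities $m_i$ uniformly bounded in terms of $m$, in which every non-empty $U_i$, $U_{i,j}$, and $U_{i,j,k}$ is diffeomorphic to a closed ball carrying the restriction of the fixed spin structure $s$. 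On such contractible pieces, harmonic Dirac sections satisfying $B_-$ vanish (the Dirac complex has trivial cohomology there), so $N = 1$ and the Cheeger-type bound applies directly to $\lambda_1(D^{(M,g_\Lambda)}_s)$.

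Next, I would engineer the metric $g_\Lambda$ so that each restricted piece $(U_j, g_\Lambda|_{U_j})$ is isometric to a flat anisotropic ``box'' $[0,L]^{m-1}\times[0,\epsilon]$, for a small parameter $\epsilon \ll L$. On such a box the Dirac spectrum under $B_-$ is explicitly computable by separation of variables and the first positive exact eigenvalue satisfies $\lambda(U_j) \geq c(m)/\epsilon$. The unit-volume normalisation $\sum_j \text{Vol}(U_j) = 1$ then forces the number of pieces to be $K(\epsilon) \sim \epsilon^{-1} L^{-(m-1)}$, which for fixed $L$ grows only linearly in $1/\epsilon$ --- much slower than the $\epsilon^{-m}$ one would obtain with isotropic balls. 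A subordinate partition of unity $\{\rho_j\}$ can be chosen with transitions only along the long directions, giving $|\nabla \rho_j| \leq C/L$ and therefore $C_\rho \leq C'/L^2$.

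Substituting the estimates $1/\lambda^2(U_*) \leq C_1\epsilon^2$, $C_\rho/\lambda^2(U_*) \leq C_2\epsilon^2/L^2$, and $m_i \leq M(m)$ into the bound of Theorem \ref{Cheeger} with $N=1$ yields a denominator under the square root of order $K(\epsilon)\cdot\epsilon^2 \sim \epsilon/L^{m-1}$, and therefore $\lambda_1(D^{(M,g_\Lambda)}_s) \geq c\, L^{(m-1)/2}/\sqrt{\epsilon}$, which tends to $+\infty$ as $\epsilon\to 0$ for every $m \geq 2$. The main obstacle is the construction of a global smooth metric $g_\Lambda$ locally isometric to the prescribed anisotropic box on each $U_j$: the ``long'' directions of adjacent boxes need not match across the collar overlaps, and one must verify that the interpolating metric there does not destroy the per-piece lower bound on $\lambda(U_j)$ nor inflate $C_\rho$. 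Organising this gluing in a way compatible with the fixed spin structure $s$ (which extends trivially across each contractible $U_j$) is the technical heart of the argument; once accomplished, the conclusion $\lambda_1(D^{(M,g_\Lambda)}_s) \geq \Lambda$ follows from Theorem \ref{Cheeger} by direct substitution.
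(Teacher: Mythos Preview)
The proposal has a genuine gap and also diverges from both the paper's argument and the actual Gentile--Pagliara construction it claims to follow. Gentile--Pagliara (and the paper's adaptation) do \emph{not} use a fine triangulation with a growing number of pieces; they use a \emph{fixed} three-piece cover of a ``cigar'' construction. One forms a connected sum of $M$ with a sphere carrying a long cylinder $I\times S^{m-1}$, covers the result by $U_1$ (one cap plus a short cylinder), $U_2$ (the remainder of $M$), and $U_3$ (the long middle cylinder), and then stretches only $U_3$ to length $t$. The double intersections are fixed short cylinders $Z_1,Z_3\cong I\times S^{m-1}$, the triple intersection is empty, and the metric on $S^{m-1}$ is chosen to admit no harmonic spinors, giving $N_1=N_2=0$ and $N=1$. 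By a K\"unneth argument $\mu(U_3)$ stays bounded below independently of $t$; every other quantity in Theorem~\ref{Cheeger} is $t$-independent, so $\mu_1(M,g_t)\ge\varepsilon>0$. Since $\mathrm{Vol}(M,g_t)=a+bt$, rescaling to unit volume yields $\lambda_1^2\ge\varepsilon(a+bt)^{2/m}\to\infty$.

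Your scheme breaks down at two points. First, the assertion that $\mathcal H_{B_-}(U_{i,j},V)=0$ on contractible pieces is unjustified: the paper explicitly notes (remark after Theorem~\ref{DeRham}) that Dirac cohomology is a \emph{Riemannian}, not a topological, invariant, so contractibility alone does not force $N_1=N_2=0$; you would have to verify this by hand for your specific flat boxes and boundary operator $B_-$, and it is not obvious. Second, the metric construction is not feasible as stated: a general compact $M$ admits no smooth metric that is locally flat on every simplex of a triangulation, and once you introduce curved transition collars you lose the explicit box spectrum $\lambda(U_j)\ge c/\epsilon$. Worse, to obtain $K(\epsilon)\sim 1/\epsilon$ boxes of shape $[0,L]^{m-1}\times[0,\epsilon]$ covering a fixed manifold you must stack them along the short direction, and then neighbouring boxes overlap over a region of width $\lesssim\epsilon$, forcing $|\nabla\rho_j|\gtrsim 1/\epsilon$ and $C_\rho\gtrsim 1/\epsilon^2$, not $1/L^2$ as you assumed. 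The paper's three-piece cigar avoids all of this by keeping the cover, the partition of unity, and $C_\rho$ \emph{fixed} and extracting the growth entirely from the volume rescaling.
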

When the proof of this theorem was written, the author was unaware that \cite {AJ11} had proved this result in the context of conformally covariant elliptic operators.

\subsection{Lower Spectrum of Degenerating Hyperbolic Three Manifolds}
According to Thurston's cusp closing Theorem (cf. \cite{Th79}),
every complete, non compact, hyperbolic, three manifold $M$ of
finite volume is the limit in the sense of pointed Lipschitz of a
sequence of compact, hyperbolic, three manifolds $(M_j)_{j\ge
0}$.\par The Laplace-Beltrami operator on p-forms is selfadjoint and
non negative. Its spectrum is contained in $[0,\infty[$ and can be
seen as the disjoint union of pure point spectrum i.e. eigenvalues
and continuous spectrum i.e. approximate eigenvalues or,
alternatively, as the disjoint union of non essential spectrum i. e.
isolated eigenvalues of finite multiplicity and essential spectrum
i. e. cluster points of the spectrum and eigenvalues of infinite
multiplicity.\par On the basis of Thurston's Theorem, we expect the
eigenvalues of ${\Delta}_p$ on $M_j$ to accumulate at points  of the
spectrum of ${\Delta}_p$ on $M$.\par In three dimensions the spectra
of functions and coexact 1-forms fully determines the spectra of
forms in all degree. In the case of functions, the results of
Donnely (\cite{Do80}) implied $\ess\spec( {\Delta}_0)=[1,\infty[$
and a sharp estimate for the number of eigenvalues of $M_j$ in any
interval $[1,1+x^2]$ was given by Chavel and Dodziuk \cite{CD93}. In
the case of 1-forms, Mazzeo and Phillips (\cite{MP90}) proved
$\spec({\Delta}_1)=[0,\infty[$ and the accumulation rate near $0$
was estimated by McGowan (\cite{Go93}). Later on, these results were
extended by Dodziuk and McGowan (\cite{DG95}), who gave an
asymptotic formula for the number of 1-form eigenvalues in an
arbitrary interval $[0,x]$.

\begin{theorem}[\bf Dodziuk, Mc Gowan ]\label{thm41}
On a degenerating sequence of hyperbolic compact three manifolds without boundary
$(M_j,g_j)_{j\ge 0}$ the lower eigenvalues of the Laplace-Beltrami
operator acting on $1-$forms accumulate near zero as the inverse of
the square of the diameter. More precisely, there exists an integer
$N_0\in \mathbf{N}_0$ such that
\begin{equation}\lambda_{N_0}(\Delta_1^{(M_j,g_j)})=\frac{O(1)}{\diam^2(M_j,g_j)}\quad(j\rightarrow
\infty).\end{equation}

\end{theorem}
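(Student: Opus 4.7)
The plan is to exhibit, on each $M_j$, a fixed-dimensional space of explicit test $1$-forms whose Rayleigh quotients are of order $1/\diam^2(M_j,g_j)$, and to conclude by the min-max characterization of eigenvalues; note that the Cheeger-type dissection of Theorem~\ref{Cheeger} produces the opposite, lower bound, so it is not the relevant tool here.

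First I would invoke Thurston's hyperbolic Dehn surgery theorem: the complete non compact limit $M$ has finitely many cusps, say $N_0$ of them, and for $j$ large $M_j$ is obtained by Dehn filling each cusp, producing a Margulis tube $T_j^{(k)}$, $k=1,\ldots,N_0$, around a short closed geodesic of length $\ell_j^{(k)}\to 0$. In Fermi coordinates the tube metric reads
\begin{equation*}
g_j=dr^2+\sinh^2(r)\,d\theta^2+\cosh^2(r)\,dz^2,\quad r\in[0,R_j^{(k)}],\ \theta\in[0,2\pi],\ z\in\mathbf{R}/\ell_j^{(k)}\mathbf{Z},
\end{equation*}
with outer radius $R_j^{(k)}\sim\log(1/\ell_j^{(k)})$, so $\diam(M_j,g_j)\asymp\max_k R_j^{(k)}$. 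I would then fix $\chi\in C^\infty_c([0,R_j^{(k)}))$ equal to $1$ on $[0,R_j^{(k)}/2]$ and vanishing on $[3R_j^{(k)}/4,R_j^{(k)}]$ with $|\chi'|\le C/R_j^{(k)}$, and set $\omega_j^{(k)}:=\chi(r)\,dz$ on $T_j^{(k)}$, extended by zero elsewhere. These $N_0$ forms have pairwise disjoint supports and are linearly independent.

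Using $|dz|_g^2=\cosh^{-2}(r)$ and $\dvol=\sinh(r)\cosh(r)\,dr\,d\theta\,dz$, a direct computation yields
\begin{equation*}
\|\omega_j^{(k)}\|^2=2\pi\ell_j^{(k)}\int_0^{R_j^{(k)}}\chi^2\tanh(r)\,dr\,\asymp\,\ell_j^{(k)}R_j^{(k)},\qquad \|d\omega_j^{(k)}\|^2\le\frac{C\,\ell_j^{(k)}}{R_j^{(k)}}.
\end{equation*}
Since $*dz=\tanh(r)\,dr\wedge d\theta$ is a function of $r$ alone wedged with $dr\wedge d\theta$, one has $d(*\omega_j^{(k)})=0$ and hence $\delta\omega_j^{(k)}=0$. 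The Rayleigh quotient on $V_j:=\spann\{\omega_j^{(1)},\ldots,\omega_j^{(N_0)}\}$ (and on its enlargement by the finite-dimensional space of harmonic $1$-forms, on which the quotient vanishes trivially) is therefore bounded by $C/(R_j^{(k)})^2=O(1)/\diam^2(M_j,g_j)$ uniformly in $k$ and $j$. By the min-max principle one concludes $\lambda_{N_0}(\Delta_1^{(M_j,g_j)})\le O(1)/\diam^2(M_j,g_j)$, possibly after increasing $N_0$ by the (uniform in $j$) first Betti number of $M_j$.

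The main obstacle, and the real content of the argument, is the choice of the longitudinal generator $dz$ rather than the meridional $d\theta$. The hyperbolic volume growth $\sim e^{2r}\,dr\,d\theta\,dz$ on the tube would otherwise force a naive bump $1$-form's $L^2$ mass to explode exponentially in $R_j^{(k)}$, swallowing the desired $1/(R_j^{(k)})^2$ gain; the pointwise norm $|dz|_g=\cosh^{-1}(r)$ decays precisely so as to cancel this volume factor, leaving only linear growth in $R_j^{(k)}$ in both numerator and denominator of the Rayleigh quotient. A secondary technical point, namely that $N_0$ can be chosen uniform in $j$, is handled by the uniform collar estimates supplied by Thurston's theorem together with Mostow rigidity for the Betti numbers.
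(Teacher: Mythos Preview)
The paper does not prove Theorem~\ref{thm41}; it is quoted in the introduction as a result of Dodziuk and McGowan \cite{DG95}, serving as background for the paper's own contributions on the Dirac operator. So there is no proof in the paper to compare your proposal against.

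That said, your sketch is essentially the correct argument and is in the spirit of what Dodziuk--McGowan actually do: build test $1$-forms supported in the Margulis tubes and apply min--max. Your identification of the longitudinal form $dz$ as the right object---because $|dz|_g=\cosh^{-1}(r)$ exactly compensates the exponential volume growth---is precisely the key point. Two small corrections are worth noting. First, the uniform bound on $b_1(M_j)$ does not come from Mostow rigidity but from the elementary topology of Dehn filling (the $M_j$ are quotients of a fixed compact core with solid tori glued in, so $b_1(M_j)\le b_1$ of the core). Second, the tube quotient involves a holonomy twist $(t,\theta)\sim(t+\varepsilon_j,\theta+\rho_j)$, not just a translation in $z$; this is harmless for your argument since $dt$ is invariant under any translation and still descends, but the coordinate description $z\in\mathbf{R}/\ell_j\mathbf{Z}$ is slightly misleading. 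Finally, if the tubes have unequal radii $R_j^{(k)}$, the span of all $N_0$ test forms controls $\lambda_{N_0}$ only by $C/(\min_k R_j^{(k)})^2$, not $C/\diam^2$; since the statement merely asserts the existence of \emph{some} $N_0$, it is cleaner to use a single test form on the longest tube together with the harmonic forms.
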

Recall that $c_j=O(1)$ $(j\rightarrow\infty)$ if and only if $(c_j)_{j\ge0}$ is
a bounded sequence and that for a degenerating sequence of
hyperbolic manifolds $\diam(M_j,g_j)\uparrow+\infty$
$(j\rightarrow\infty)$.\par
An explicit lower bound for the first eigenvalue with respect to the diameter
has been recently provided by Jammes (cf. \cite{Ja12}).

\begin{theorem}[\bf Jammes]\label{thm42}
For any real $V>0$, there exists a constant $c(V)>0$ such that,
if $M$ is a three dimensional hyperbolic compact without boundary manifold of volume smaller than $V$,
whose thin part has $k$ components, then
\begin{equation}
\begin{split}
\lambda_{1}(\Delta_1^{(M,g)})&\ge\frac{c(V)}{\diam^3(M,g)\exp\left(2k\diam^3(M,g\right))}\\
\lambda_{k+1}(\Delta_1^{(M,g)})&\ge\frac{c(V)}{\diam^2(M,g)}.
\end{split}
\end{equation}
\end{theorem}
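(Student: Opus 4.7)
The strategy is to exploit the Margulis thick-thin decomposition of the compact hyperbolic three-manifold $M$ together with a Mayer-Vietoris / dissection argument of the type proved as Theorem \ref{Cheeger}, adapted from the Dirac operator to the Hodge Laplacian on $1$-forms (the form version is due to McGowan in \cite{Go93} and is in fact the classical model for Theorem \ref{Cheeger}). The thin part of $M$ splits into $k$ connected components $T_1,\dots,T_k$, each a Margulis tube around a short closed geodesic, while the thick part $M_{\mathrm{thick}}$ has bounded geometry once the volume bound $V$ is fixed. The closed cover $\{M_{\mathrm{thick}},T_1,\dots,T_k\}$ is then naturally tailored for a dissection estimate.

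The first reduction is Hodge-theoretic: on a closed oriented three-manifold the positive spectrum of $\Delta_1$ is the union of the positive spectrum of $\Delta_0$ and the positive spectrum on coexact $1$-forms, and for hyperbolic $3$-manifolds of bounded volume the function spectrum is already controlled from below by classical results, so only the coexact $1$-form part is critical. Next I would invoke a min-max / Mayer-Vietoris argument: any coexact test $1$-form on $M$ is glued via a partition of unity subordinate to the cover, and its Rayleigh quotient is controlled in terms of (i) the smallest positive coexact eigenvalue $\lambda(U_i)$ on each piece with absolute boundary conditions on $\partial U_i$, (ii) the corresponding eigenvalues on the overlaps $U_i \cap U_j$, and (iii) the gradient of the partition of unity. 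On each Margulis tube, viewed as a warped product $S^1 \times D^2$, the coexact spectrum can be treated by separation of variables, yielding a first eigenvalue of order $\diam(T_i)^{-2}$ together with a one-dimensional harmonic class dual to the core geodesic; on $M_{\mathrm{thick}}$, bounded geometry gives a uniform lower bound $c(V)>0$ depending only on the volume bound.

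The delicate part is the quantitative gluing, which differs between the two claimed bounds. For $\lambda_1$, the competing test form may reproduce the $k$ tube-cohomology modes, and the cumulative cost of transporting Mayer-Vietoris data across all $k$ thin collars is exponential in the product of $k$ with the inverse collar width; since the collar width of a Margulis tube decays polynomially in the ambient diameter, one obtains the factor $\exp(2k\,\diam^3)$. For $\lambda_{k+1}$, test forms can be chosen orthogonal to the $k$ tube-cohomology modes, eliminating the exponential amplification and leaving only a single polynomial $\diam^2$ factor from one Mayer-Vietoris step. The main obstacle is the sharp quantification of both the partition-of-unity gradient on the thin overlaps and the first coexact eigenvalue of a single Margulis tube, since both require explicit hyperbolic computations and the exponential bound is genuinely forced by the geometry of long thin tubes; tightness rather than merely an order of magnitude is the subtle point. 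Substituting these ingredients into the McGowan-Cheeger inequality produces both estimates simultaneously, with the explicit constant $c(V)$ obtained by tracking Margulis' constant at each step.
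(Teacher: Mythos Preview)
This theorem is not proved in the paper. It is stated in the Introduction as a result of Jammes, with the citation \cite{Ja12}, and serves only as background context alongside the companion result Theorem~\ref{thm43}. The paper's own contributions concern the Dirac operator (Theorems~\ref{Hodge0}, \ref{Cheeger}, \ref{Berger}, \ref{spinors3Manifold}), and the Jammes theorems on the Hodge Laplacian are quoted without proof. Consequently there is no ``paper's own proof'' against which to compare your proposal.

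That said, your sketch is broadly in the spirit of Jammes' actual argument in \cite{Ja12}, which does use the McGowan/Cheeger dissection principle on a thick-thin cover. But be aware that several of your quantitative claims are heuristic rather than established: the precise origin of the $\diam^3$ power and of the exponential factor $\exp(2k\,\diam^3)$ is not simply ``cumulative transport across collars'' as you describe it, and your assertion that the partition-of-unity gradient contributes only polynomially while the Mayer-Vietoris iteration contributes exponentially would need a careful derivation from the explicit tube eigenvalue computation. If you intend to reconstruct Jammes' proof, you should consult \cite{Ja12} directly for the sharp tube estimates and the exact bookkeeping of constants, since the exponents in the statement are delicate and not recoverable from the general dissection inequality alone.
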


\begin{theorem}[\bf Jammes]\label{thm43}
For every non compact three dimensional hyperbolic manifold $M$ of bounded volume,
there exits a constant $c>0$ and a degenerating sequence of hyperbolic three compact without boundary manifolds
$(M_j,g_j)_{j\ge 0}$ converging to $M$ such that for all $j\ge0$
\begin{equation}\lambda_{1}(\Delta_1^{(M_j,g_j)})\ge\frac{c}{\diam^2(M_j,g_j)}.\end{equation}

\end{theorem}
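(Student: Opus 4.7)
The plan is to construct the degenerating sequence explicitly by Thurston's hyperbolic Dehn surgery theorem, choosing slopes so that the filled manifolds have vanishing first Betti number, and then to apply the dissection principle of Theorem~\ref{Cheeger}---specialized to the Gauss--Bonnet operator $d+\delta$ on $\Lambda^{\bullet}T^{*}M$ with the involution $T=(-1)^{\deg}$, whose Dirac Laplacian restricted to $1$-forms is the Hodge Laplacian $\Delta_1$---to the Margulis thick-thin decomposition of each $M_j$, concluding with a quantitative Mayer--Vietoris argument that eliminates the few small eigenvalues permitted by Theorem~\ref{thm42}.

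First I would invoke the hyperbolic Dehn surgery theorem. The manifold $M$ has finitely many cusps $C_1,\dots,C_k$ with toral cross sections $T_i$; fix a primitive basis $(\mu_i,\lambda_i)$ of $H_1(T_i;\mathbb{Z})$. For all but finitely many slope tuples $(s_1,\dots,s_k)$ the Dehn filling $M(s_1,\dots,s_k)$ admits a unique hyperbolic structure, and as $\min_i\norm{s_i}\to\infty$ these manifolds converge to $M$ in the pointed Lipschitz topology. I would choose a sequence $s^{(j)}=(s_1^{(j)},\dots,s_k^{(j)})$ such that (a) all $k$ core geodesics of $M_j$ have comparable length $\ell_j$, so that $\ell_j\asymp 1/\diam(M_j,g_j)$, and (b) $b_1(M_j;\mathbb{R})=0$; the latter is a Zariski-dense condition on the slopes, since Dehn filling along $s_i$ kills the image of $s_i$ in $H_1$, and a generic choice then trivialises the resulting abelianisation. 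Thurston's theorem gives $\diam(M_j,g_j)\uparrow\infty$.

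Next I would apply the Margulis lemma to write $M_j=U_0^{(j)}\cup U_1^{(j)}\cup\dots\cup U_k^{(j)}$ with $U_0^{(j)}$ the $\varepsilon$-thick part and each $U_i^{(j)}$, $i\ge 1$, a Margulis tube about the $i$-th short core geodesic. Because $U_0^{(j)}$ converges to a truncation of $M$ and is therefore of uniformly bounded geometry, $\lambda(U_0^{(j)})\ge c_0>0$ independently of $j$. Each Margulis tube is an explicit hyperbolic solid torus of core length $\ell_j$, on which $\Delta_1$ under absolute boundary conditions can be analysed by Fourier expansion in the axial and angular coordinates, giving $\lambda(U_i^{(j)})\ge c_1/\diam(M_j,g_j)^2$. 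Double intersections $U_0^{(j)}\cap U_i^{(j)}$ are collars of $2$-tori and triple intersections are empty, so $N_1$ and $N_2$ depend only on $k$. Plugging into Theorem~\ref{Cheeger} yields
\begin{equation*}
\lambda_N\br{\Delta_1^{(M_j,g_j)}}\ge\frac{c'}{\diam(M_j,g_j)^2}
\end{equation*}
for some fixed $N=N(k)$ and $c'>0$ independent of $j$.

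The main obstacle is to upgrade this bound on $\lambda_N$ to one on $\lambda_1$. Here I would show that any eigenform with eigenvalue below $c'/\diam(M_j,g_j)^2$ is $L^2$-close to the harmonic space $\mathcal{H}^1(M_j)$ by a quantitative Mayer--Vietoris argument: the locally harmonic $1$-form concentrated on each Margulis tube (the longitudinal $d\theta$) has cohomology class cobounded in $M_j$ by the meridian disk of the corresponding Dehn filling, whose area is of order $\diam(M_j,g_j)$, so the bridging cost is \emph{linear} rather than exponential in the diameter. Since $b_1(M_j)=0$ by construction, $\mathcal{H}^1(M_j)=0$, and hence no such low-lying eigenvalue can exist; the bound therefore already applies to $\lambda_1$. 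This final step is the crucial improvement over Theorem~\ref{thm42}, whose exponential dependence on the diameter reflects precisely the absence of such a slope choice.
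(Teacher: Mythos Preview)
The paper does not contain a proof of Theorem~\ref{thm43}: the result is quoted from Jammes \cite{Ja12} as context for the paper's own results on the Atiyah--Singer operator, so there is no in-paper argument to compare against. I therefore assess the proposal on its own merits.

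There is a genuine gap at step (b). You assert that a generic choice of Dehn-filling slopes gives $b_1(M_j;\mathbf{R})=0$, but this is false for many $M$. If $N$ is a compact core of $M$ with $\partial N$ a union of $k$ tori, the half-lives-half-dies lemma forces the image of $H_1(\partial N;\mathbf{Q})\to H_1(N;\mathbf{Q})$ to have rank exactly $k$. Dehn filling along slopes $s_1,\dots,s_k$ produces $H_1(M_j;\mathbf{Q})=H_1(N;\mathbf{Q})/\langle[s_1],\dots,[s_k]\rangle$ with each $[s_i]$ lying in that rank-$k$ image, so one always has $b_1(M_j)\ge b_1(M)-k$. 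Finite-volume cusped hyperbolic $3$-manifolds with $b_1(M)>k$ exist in abundance (for instance, once-punctured-surface bundles whose pseudo-Anosov monodromy acts on $H_1$ of the fibre with $1$ as an eigenvalue), and for such $M$ \emph{no} Dehn filling has vanishing first Betti number. Your final paragraph uses $\mathcal{H}^1(M_j)=0$ essentially to eliminate the low-lying eigenvalues, so the argument does not cover these $M$, whereas the theorem is stated for \emph{every} finite-volume non-compact $M$.

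Even in the favourable case $b_1(M)=k$, the closing step---upgrading the bound on $\lambda_N$ to one on $\lambda_1$ by a ``quantitative Mayer--Vietoris argument''---is only a sketch. The claim that every eigenform of eigenvalue below $c'/\diam(M_j)^2$ is $L^2$-close to a harmonic form, with deficit controlled linearly by the area of the meridian disk, is exactly the delicate analytic point, and it does not follow from Theorem~\ref{Cheeger}, which only bounds $\lambda_N$ for the specific $N=N_1+N_2+1$ determined by the cover.
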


In two dimensions the spectrum of $\Delta_0$ fully determines  the
spectra of forms of all degree. The analogous questions for surfaces
were studied by Wolpert (\cite{Wo87}), Hejahl (\cite{Hej90}) and Ji
(\cite{Ji93}) and a sharp estimate for the accumulation rate was
obtained by Ji and Zworski (\cite{JZ93}). In addition Colbois and
Courtois (\cite{CC89}, \cite{CC89bis}) proved that the eigenvalues
below the bottom of the essential spectrum are limits of eigenvalues
of $M_j$ for both Riemann surfaces and hyperbolic three
manifolds.\par Problems of this kind don't arise in dimensions
greater than or equal to four (cf. \cite{Gro79}), because the number
of complete hyperbolic manifolds of volume less than or equal to a
given constant is finite in this case.\par In the case  of the
classical Dirac operator B\"ar (cf. \cite{Ba00}) proved:
\begin{theorem}[\bf B\"ar]\label{thm51}
On a degenerating sequence of oriented, hyperbolic, three compact without boundary manifolds
$(M_j,g_j)_{j\ge0}$ for any spin structure $(s_j)_{j\ge0}$ on $M_j$
the lower eigenvalues of the Atiyah-Singer operator
$D_{s_j}^{(M_j,g_j)}$ do not accumulate. More precisely, there
exists an integer $N_0\in\mathbf{N}_0$ such that
\begin{equation}\left|\lambda_{N_0}(D_{s_j}^{(M_j,\,g_j)})\right|=O(1)
\quad(j\rightarrow\infty).\end{equation}
\end{theorem}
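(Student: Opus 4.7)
The plan is to apply the dissection estimate of Theorem~\ref{Cheeger} to a thick--thin decomposition of $M_j$ provided by the Margulis lemma. By Thurston's cusp closing theorem $(M_j,g_j)$ converges in the pointed Lipschitz topology to a complete, finite volume, cusped hyperbolic three manifold $M_\infty$: the $\epsilon$-thin part of $M_j$ is a disjoint union of finitely many solid Margulis tubes $T_1^j,\dots,T_{k_j}^j$ about short closed geodesics, whose number $k_j$ is uniformly bounded by the volume, while the $\epsilon$-thick part is uniformly bi-Lipschitz to a fixed truncation of $M_\infty$.

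First I would fatten these pieces into closed sets $U_0^j,U_1^j,\dots,U_{k_j}^j$ whose interiors cover $M_j$, with overlaps concentrated in uniformly bi-Lipschitz collars, and choose a subordinate partition of unity $(\rho_i^j)$ whose gradients are bounded independently of $j$, so that the constant $C_\rho$ in Theorem~\ref{Cheeger} is uniformly controlled. The pairwise and triple intersections are topologically fixed annular regions of bounded geometry, hence the harmonic spinor spaces $\mathcal{H}_{B_{-}}(U_{i,j},V)$ and $\mathcal{H}_{B_{-}}(U_{i,j,k},V)$ have dimensions bounded uniformly in $j$ and in the spin structure $s_j$ (its restriction to each piece has only finitely many possibilities). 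Consequently the integer $N=N_1+N_2+1$ may be fixed once and for all.

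Next I would bound from below $\lambda(U)$, the smallest positive Dirac eigenvalue on exact spinors satisfying $B_{-}$, on every piece. On the thick part $U_0^j$ the uniform bi-Lipschitz equivalence to a fixed compact manifold with boundary, combined with the continuity of the Dirac spectrum under such equivalences, gives $\lambda(U_0^j)\ge c_0>0$ independently of $j$ and of $s_j$. On a Margulis tube $T_r^j$ the hyperbolic metric is explicit in Fermi coordinates around the core geodesic, the Dirac operator separates into Fourier modes along the core and the meridian, and the absolute boundary condition $B_{-}$ together with the pairing of $d$ and $\delta$ via $T$ forces a strictly positive spectral gap on exact spinors which, decisively, does \emph{not} degenerate as the tube collapses and is independent of the two possible spin structures along the core.

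Finally, substituting these uniform lower bounds $\lambda(U_i^j)\ge c>0$ together with the uniform $C_\rho$ into the inequality of Theorem~\ref{Cheeger} yields $\lambda_N(D_{s_j}^{(M_j,g_j)})\ge c'>0$ for the fixed $N$, uniformly in $j$ and in $s_j$; since $T$ anticommutes with $Q$ the Dirac spectrum is symmetric about zero, so this bound is simultaneously a bound on $|\lambda_N|$, producing the claimed behaviour. The hard part will be the uniform spectral gap on Margulis tubes: whereas on $1$-forms the $b_1=1$ of the solid torus produces nearly harmonic forms responsible for the $O(1)/\diam^2$ accumulation of Theorem~\ref{thm41}, no analogous cohomological obstruction is available to spinors, but one must still exclude, by explicit Fourier analysis along the core combined with a careful treatment of the boundary operator on the zero mode, a sequence of exact eigensections whose eigenvalues would tend to zero as the core length vanishes.
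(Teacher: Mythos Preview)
The paper does not itself prove Theorem~\ref{thm51}: it is quoted as B\"ar's result from \cite{Ba00}. What the paper does prove is the sharper Theorem~\ref{spinors3Manifold} (with $N_0=1$), and your proposal is precisely the paper's strategy for that stronger statement, so in substance you are on the same track.

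Two differences are worth flagging. First, the paper does not take the full Margulis tubes as single pieces: it splits each tube into an annular part $U_j=\{r_0\le \dist(\cdot,\gamma_j)\le R_j-1\}$ and a bounded core neighbourhood $V_j=\{\dist(\cdot,\gamma_j)\le 1\}$, together with a thickened thick part $W_j$. The explicit Fourier analysis (Propositions~\ref{prop53}--\ref{prop58}) is carried out only on the annular piece, where the metric is uniformly comparable to a warped product and the coefficients $\kappa_i$ are bounded below; the core piece $V_j$ is handled separately. If you keep the whole solid tube as one $U_i$, the separation-of-variables argument has to cope with the degeneration at $r=0$, which the paper deliberately avoids.

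Second, the paper obtains $N_1=N_2=0$ (hence $N=1$) rather than merely a uniform bound on $N$: the pairwise intersections are product collars on flat tori, and the crucial input---present in the paper only as Remark~\ref{remk} and borrowed from \cite{Ba00}---is that the induced spin structure on each torus fibre $F_u$ is \emph{forced} to be nontrivial because $F_u$ bounds in the solid tube, so there are no harmonic spinors on the intersections. Your phrase ``independent of the two possible spin structures along the core'' understates this: it is not that both cases work, but that the trivial case cannot occur. This same fact is what kills the zero Fourier mode in the tube analysis and is the real reason the Dirac spectrum behaves differently from $1$-forms.
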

The different behaviour of spin Laplacian and Laplacian on forms is
due to topological reasons. We can improve Theorem \ref{thm51}
proving, by means of Theorem \ref{Cheeger}, that in Theorem
\ref{spinors3Manifold}  $N_0=1$ can be chosen and providing an
explicit lower bound for the first non zero eigenvalue of the Dirac
operator.

\begin{theorem} \label{spinors3Manifold} On a degenerating sequence of oriented, hyperbolic, three spin compact without boundary manifolds
for any choice of the spin structures  the lower eigenvalues of the
Atiyah-Singer operator do not accumulate and the first positive non
zero eigenvalue is bounded from below by a positive uniform constant $c>0$
\begin{equation}
\lambda_{1}(D_{s_j}^{(M_j,\,g_j)})\ge c.
\end{equation}
\end{theorem}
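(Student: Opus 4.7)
The plan is to apply Theorem \ref{Cheeger} to a thick--thin decomposition of each $M_j$ supplied by Thurston's cusp--closing theorem. For $j$ large enough one has $M_j = T_j \cup C_j^{(1)} \cup \cdots \cup C_j^{(k)}$, where $T_j$ is a ``thick'' piece converging smoothly (in the pointed Lipschitz topology) to the truncated thick part of the limit finite-volume hyperbolic manifold $M_\infty$, and the $C_j^{(i)}$ are Margulis solid tori around short closed geodesics. The integer $k$ is bounded by the number of cusps of $M_\infty$, hence uniformly in $j$. I would take the closed cover $\{U_0, U_1, \dots, U_k\}$ to be small thickenings of these pieces so that their interiors cover $M_j$ and each nonempty $U_i \cap U_{i'}$, $i \neq i'$, is a thin collar $T^2 \times [0,\varepsilon]$ around a separating torus. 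With this cover, $K = k$ and $m_i \le 2$ uniformly, and since the overlap collars have uniformly bounded geometry a subordinate partition of unity $(\rho_i)$ can be chosen with $C_\rho$ bounded uniformly in $j$.

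The central estimate is a uniform positive lower bound for $\lambda(U_i)$. On the thick part $T_j$, Cheeger--Gromov type compactness and smooth convergence to $T_\infty$ imply spectral convergence of the Dirac operator with the absolute boundary condition restricted to exact sections, so $\lambda(T_j) \to \lambda(T_\infty) > 0$. On a Margulis tube $C_j^{(i)} \simeq D^2 \times S^1$ carrying a hyperbolic metric, the plan is to apply separation of variables along the $S^1$ factor: Fourier decomposition of spinors with respect to the core geodesic action reduces the Dirac operator to a family of two-dimensional Dirac-type operators on the hyperbolic disk, indexed by a Fourier parameter and depending on the chosen spin structure. By the orthogonal decomposition of Theorem \ref{Hodge0}, exact Dirac sections avoid the harmonic and coexact subspaces, so the zero modes of the reduced family are excluded; a direct analysis exploiting the scaling properties of hyperbolic tubes then yields a positive lower bound on $\lambda(C_j^{(i)})$ independent of the length of the core geodesic and of the spin structure.

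It remains to control the integer $N = N_1 + N_2 + 1$ and in fact to show that $N = 1$ can be arranged. The relevant multiple intersections $U_{i,j}$ and $U_{i,j,k}$ are either empty or thin collars $T^2 \times [0, \varepsilon]$, and a harmonic Dirac section on such a collar satisfying $B_-$ on both boundary components reduces, through the collar ODE in the normal direction, to a boundary-matching problem for the two-dimensional Dirac equation on $T^2$; by choosing $\varepsilon$ uniformly small one can force the only solution to be trivial, so that $\mathcal{H}_{B_-}(U_{i,j},V) = \mathcal{H}_{B_-}(U_{i,j,k},V) = 0$ for all relevant intersections. Substituting the uniform bounds on $C_\rho$, $m_i$, $K$ and the uniform lower bounds on $\lambda(U_i)$ into the dissection inequality of Theorem \ref{Cheeger} then gives $\lambda_1(D_{s_j}^{(M_j, g_j)}) \ge c$ for a universal constant $c > 0$. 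The main obstacle is the spectral analysis on the Margulis tubes: one must establish a uniform spectral gap for the Dirac operator restricted to exact sections with absolute boundary conditions as the core geodesic degenerates, and verify that this gap is uniform across the finite set of spin structures available on the boundary torus.
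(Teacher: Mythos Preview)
Your overall strategy---apply Theorem~\ref{Cheeger} to a thick--thin cover, arrange $N=1$, and verify uniform lower bounds for $\lambda(U_i)$ on each piece---matches the paper's. But two aspects of your execution diverge from the paper in ways that matter.

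First, the cover. The paper does \emph{not} use one solid Margulis tube per cusp; for each tube it takes \emph{three} pieces: a neighbourhood $W_j$ of the thick part, an annular piece $U_j=\{r_0\le\dist(x,\gamma_j)\le R_j-1\}$ of the tube, and a bounded tubular neighbourhood $V_j$ of the core geodesic. The point is that $V_j$ and the overlaps $W_j\cap U_j$, $U_j\cap V_j$ have uniformly bounded geometry, so their eigenvalues are controlled trivially; only the long annular piece $U_j$ carries the degeneration and requires real analysis. Your two-piece cover forces you to analyse the whole solid torus, including the region near the core where $\sinh(R-u)\to 0$, which is harder.

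Second, and more seriously, your proposed separation of variables goes the wrong way. You Fourier-decompose along the core $S^1$ and reduce to two-dimensional disk problems. The paper instead decomposes spinors along the \emph{torus fibres} $F_u$ (Lemma~\ref{lemma56}), reducing the spin Laplacian on $U_j$ to a family of one-dimensional radial ODEs $-a_i''+(\kappa_i(u)-\lambda)a_i=0$ indexed by $i\in\mathbf{Z}^2$, with $\kappa_i(u)$ the eigenvalues of $\Delta_0$ on $F_u$. An elementary comparison argument (Propositions~\ref{propA1}--\ref{propA2}) then shows that for $i\neq 0$ no eigenvalue below $1$ can satisfy the boundary condition, uniformly in $R_j$.

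The crucial point you miss is why the mode $i=0$ (which has $\kappa_0=0$ and would produce eigenvalues of order $1/R_j\to 0$) does not contribute. This is \emph{not} a consequence of restricting to exact sections via Theorem~\ref{Hodge0}; that only tells you zero is not in the exact spectrum, not that the first positive eigenvalue stays away from zero. The actual mechanism is topological: by B\"ar's result (\cite{Ba00}, cited in Remark~\ref{remk}), the restriction to the boundary torus of \emph{any} spin structure on a compact hyperbolic three-manifold is the non-trivial one, so $F_u$ admits no harmonic spinors and the $i=0$ mode simply does not match to a global eigenspinor on the tube. Without this input your uniform gap on the tubes cannot be established, and your remark that the bound should be ``independent of the spin structure'' obscures the fact that this very dependence---always landing on the non-trivial structure---is what makes the argument work.
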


\section{Dirac Bundles}

The purpose of this chapter is to recall some basic definitions
concerning the theory of Dirac operators, establishing the
necessary self contained notation and introducing the standard
examples. The general references are \cite{LM89}, \cite{BW93},
\cite{BGV96} and \cite{Ba91}.
\subsection{Dirac Bundle}
\begin{definition}{\bf (Dirac Bundle)}
The quadruple $(V,\left<\cdot,\cdot\right>,\nabla,\gamma)$, where
  \begin{enumerate}
  \item[(i)] $V$ is a complex (real) vector bundle over the Riemannian manifold $(M,g)$ with
Hermitian (Riemannian) structure $\left<\cdot,\cdot\right>$,
  \item[(ii)] $\nabla:C^\infty(M,V)\to C^\infty(M,T^*M\otimes V)$ is a connection on $M$,
  \item[(iii)] $\gamma: \Cl(M,g)\to \Hom(V)$ is a {\it real} algebra bundle homomorphism from
the Clifford bundle over $M$ to the {\it real} bundle of complex
(real) endomorphisms of $V$, i.e. $V$ is a bundle of Clifford
modules,
 \end{enumerate}
    is said to be a {\it Dirac bundle}, if the following conditions are satisfied:
    \begin{enumerate}
    \item[(iv)] $\gamma(v)^*=-\gamma(v)$, $\forall v\in TM$ i.e. the Clifford multiplication
by tangent vectors is fiberwise skew-adjoint with respect to the
Hermitian (Riemannian) structure $\left<\cdot,\cdot\right>$.
    \item[(v)] $\nabla\left<\cdot,\cdot\right>=0$ i.e. the connection is Leibnizian (Riemannian). In
other words it satisfies the product rule:
      \begin{equation}
        d\left<\varphi,\psi\right>=\left<\nabla\varphi,\psi\right>+\left<\varphi,\nabla\psi\right>,\quad\forall
\varphi,\psi\in C^\infty(M,V).
      \end{equation}
    \item[(vi)] $\nabla\gamma=0$ i.e. the connection is a module derivation. In other words
it satisfies the product rule:
\begin{equation}
\begin{split}
\nabla(\gamma(w)\varphi)=\gamma(\nabla^gw)\varphi+\gamma(w)\nabla\varphi,&\quad\forall\varphi,\psi\in C^\infty(M,V),  \\
 &\quad\forall w\in C^\infty(M,\Cl(M,g)).
\end{split}
\end{equation}

\end{enumerate}
\end{definition}

Among the different geometric structures on Riemanniann Manifolds
satisfying the definition of a Dirac bundle (cf. \cite{Gi95}) the
canonical example is the spinor bundle.

\begin{definition}{\bf (Spin Manifold)}
 $(M,g,s)$ is called a {\it spin manifold} if and only if
 \begin{enumerate}
    \item $(M,g)$ is a $m$-dimensional oriented Riemannian manifold.
    \item $s$ is a {\it spin structure} for $M$, i.e. for $m\ge 3$ $s$ is a Spin$(m)$
          principal fibre bundle over $M$, admitting a double covering map
          $\pi:s\to SO(M)$ such that the following diagram commutes:
          \begin{equation}\xymatrix{s\times\Spin(m) \ar[d]^{\pi\times\Theta}\ar[r]& s\ar[d]^{\pi} \ar[r]& M \\
                      \SO(M)\times\SO(m) \ar[r] &\SO(M)\ar[ru] & \\
                     }\end{equation}
          where $\SO(M)$ denotes the $\SO(m)$ principal fiber bundle of the oriented basis of
          the tangential spaces, and $\Theta:\Spin(m)\to \SO(m)$ the
          canonical double covering. The maps $s\times \Spin(m)\rightarrow
          s$ and $\SO(M)\times\SO(m)\rightarrow \SO(M)$ describe the right
          action of the structure groups $\Spin(m)$ and $SO(m)$ on the
          principal fibre bundles $s$ and $SO(M)$ respectively.\\
          When $m=2$ a spin structure on $M$ is defined analogously with
          $\Spin(m)$ replaced by $\SO(2)$ and
          $\Theta:\SO(2)\rightarrow\SO(2)$ the connected two-sheet covering.
          When $m=1$ $\SO(M)\cong M$ and a spin structure is simply defined
          to be a two-fold covering of $M$.
 \end{enumerate}
The vector bundle over $M$ associated to $s$ w.r.t the { \it spin
representation} $\rho$ i.e.
      $$\Sigma M:=s\underset{\rho}{\times}{\mathbf{C}}^l\quad l:=2^{[\frac{m}{2}]}$$
      is called {\it spinor bundle} over $M$, see \cite{Ba91} page 18.
\end{definition}

\begin{ex}{\bf (Spinor bundle as a Dirac bundle)}\label{spinors}
Let $(M,g,s)$ be  a spin manifold of dimension $m$. We can make the
spinor bundle into a
Dirac bundle by the following choices:\\
$V:=\Sigma M$ : spinor bundle, rank$(V)=l$\\
$\left<\cdot,\cdot\right>$: Riemannian structure induced by the standard
Hermitian product in
$\mathbf{C}^l$ (which is $\Spin(m)$-invariant) and by the representation $\rho$.\\
$\nabla=\nabla^\Sigma$: spin connection = lift of the Levi-Civita
connection to the spinor bundle.
\begin{flalign*}
\gamma&:
      \begin{array}{lll}
        TM&\longrightarrow&\Hom(V)\\
        v&\longmapsto&\gamma(v)\text{, where}\quad\gamma(v)\varphi:=v\cdot\varphi
\quad(\cdot\text{ is the Clifford product})
      \end{array}
\end{flalign*}
We identified $TM$ with $SO(M)\underset{\alpha}{\times}\mathbf{R}^m$
($\alpha$ is the standard representation of $\mathbf{R}^m$) and
$\Sigma M$ with $s\underset{\rho}{\times}{\mathbf{C}}^l$. Since
$\gamma^2(v)=-g(v,v)\mathbb{1}$, by the universal property, the map
$\gamma$ extends uniquely to a real algebra bundle endomorphism
$\gamma:\Cl(M,g)\longrightarrow\Hom(V)$.
\end{ex}

\begin{ex}{\bf (Exterior algebra bundle as a Dirac Bundle)}\label{forms}
    Let $(M,g)$ be a $C^\infty$ Riemannian manifold of dimension $m$. The tangent and the
cotangent bundles are identified by the $\flat$-map defined by
$v^{\flat}(w):=g(v,w)$. Its inverse is denoted by $\sharp$. The
exterior algebra can be seen as a Dirac bundle after the following
choices:
    \begin{flalign*}
      V&:=\Lambda(T^*M)=\bigoplus_{j=0}^m \Lambda^j(T^*M):\,\text{exterior algebra over
$M$}\\
      \left<\cdot,\cdot\right>&:\,\text{Riemannian structure induced by $g$}\\
      \nabla&: \,\text{(lift of the) Levi Civita connection}\\
\gamma&:
      \begin{array}{lll}
        TM&\longrightarrow&\Hom(V)\\
        v&\longmapsto&\gamma(v):=\exte(v)-\inte(v)
      \end{array}
    \end{flalign*}
    where $\exte(v)\varphi:=v^{\flat}\wedge\varphi$
    and $\inte(v)\varphi:=\varphi(v,\cdot)$. Since $\gamma^2(v)=-g(v,v)\mathbb{1}$, by the universal property, the map $\gamma$ extends uniquely to a real algebra bundle endomorphism
    $\gamma:\Cl(M,g)\longrightarrow\Hom(V) $.
\end{ex}

\subsection{Dirac Operator and Dirac Laplacian}
\begin{definition}
Let $(V,\left<\cdot,\cdot\right>,\nabla,\gamma)$ be a Dirac bundle over the
Riemannian manifold $(M,g)$. The {\it Dirac operator}
$Q:C^\infty(M,V)\to C^\infty(M,V)$ is defined by
\begin{equation}\begin{CD}
   {C^{\infty}(M,V)} @>{\nabla}>> {C^{\infty}(M,T^*M\otimes V)} \\
    @V{Q:=\gamma\circ(\sharp\otimes\mathbb{1})\circ\nabla}VV  @VV{\sharp\otimes\mathbb{1}}V \\
   {C^{\infty}(M,V)} @<{\gamma}<< {C^{\infty}(M,TM\otimes V)}
 \end{CD}\end{equation}
The square of the Dirac operator $P:=Q^2:C^\infty(M,V)\to
C^\infty(M,V)$ is called the {\it Dirac Laplacian}.
\end{definition}

\begin{remark}
The Dirac operator $Q$ depends on the Riemannian metric $g$ and on
the homomorphism $\gamma$. If different metrics or homomorphisms are
considered, then the notation $Q={Q^g}_{\gamma}=Q^g=Q_{\gamma}$ is
utilized to avoid ambiguities.
\end{remark}

\begin{proposition}
The Dirac operator is a first order differential operator over $M$.
Its leading symbol is given by the Clifford multiplication:
\begin{equation}\sigma_L(Q)(x,\xi)=\imath\,\gamma(\xi^{\sharp})\end{equation}where $\imath:=\sqrt{-1}$.
The Dirac operator has the following local representation:
\begin{equation}
  Q(\varphi|_U)=\sum_{j=1}^m\gamma(e_j)\nabla_{e_j}\left(\varphi|_U\right)
\end{equation}
for a local orthonormal frame $\{e_1,\ldots,e_m\}$ for $TM|_U$ and a
section $\varphi\in
C^\infty(M,V)$.\\
The Dirac Laplacian is a second order partial differential operator
over $M$. Its leading symbol is given by the Riemannian metric:
 \begin{equation} \sigma_L(Q^2)(x,\xi)=g_x(\xi^\sharp,\xi^\sharp)\mathbb{1}_{V_x}\quad \forall x\in M,\, \xi\in
T^*_xM.\end{equation}
\end{proposition}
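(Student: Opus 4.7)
The plan is to unwind the defining diagram in a local orthonormal frame, which yields both the local representation and the first-order nature of $Q$, and then to read off the two leading symbols from the leading symbol of $\nabla$ together with the Clifford identity $\gamma(v)^2=-g(v,v)\ID$.

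First I would fix a local orthonormal frame $\{e_1,\dots,e_m\}$ of $TM|_U$ with dual coframe $\{e^1,\dots,e^m\}$, so that for $\varphi\in C^\infty(M,V)$ one has
\begin{equation}
\nabla\varphi|_U=\sum_{j=1}^m e^j\otimes\nabla_{e_j}\varphi|_U.
\end{equation}
Applying $\sharp\otimes\ID$ sends each $e^j$ to $e_j$ by orthonormality, and then $\gamma$ maps $e_j\otimes\nabla_{e_j}\varphi$ to $\gamma(e_j)\nabla_{e_j}\varphi$, giving the advertised formula $Q\varphi|_U=\sum_j\gamma(e_j)\nabla_{e_j}\varphi|_U$. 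Because $\nabla$ is of first order and the post-composition by the fibrewise bundle maps $\sharp\otimes\ID$ and $\gamma$ is of order zero, $Q$ is a first-order differential operator.

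For the leading symbol of $Q$, I would invoke the standard fact that $\sigma_L(\nabla)(x,\xi)=\imath\,\xi\otimes\ID_{V_x}$ for $(x,\xi)\in T^*M$, and apply functoriality of leading symbols under composition with zero-order bundle maps:
\begin{equation}
\sigma_L(Q)(x,\xi)=\gamma\circ(\sharp\otimes\ID)\bigl(\imath\,\xi\otimes\ID_{V_x}\bigr)=\imath\,\gamma(\xi^{\sharp}).
\end{equation}
For $Q^2=P$, multiplicativity of the leading symbol under composition (with orders adding) gives $\sigma_L(Q^2)(x,\xi)=\sigma_L(Q)(x,\xi)^2=(\imath\,\gamma(\xi^{\sharp}))^2=-\gamma(\xi^{\sharp})^2$, and the Clifford relation satisfied by $\gamma$ on tangent vectors, namely $\gamma(v)^2=-g(v,v)\ID$, with $v=\xi^{\sharp}$ yields $\sigma_L(Q^2)(x,\xi)=g_x(\xi^{\sharp},\xi^{\sharp})\,\ID_{V_x}$. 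In particular $P$ is second order and of Laplace type.

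There is no real obstacle here: the entire argument is a bookkeeping exercise in frame/coframe and the musical isomorphisms, cleanly handled by orthonormality, together with the two structural inputs recorded in the previous subsection — the symbol of a metric connection and the Clifford relation built into $\gamma$.
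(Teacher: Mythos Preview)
Your argument is correct and is exactly the standard computation; the paper itself states this proposition without proof, treating it as a routine consequence of the definition $Q=\gamma\circ(\sharp\otimes\mathbb{1})\circ\nabla$ and the Clifford relation $\gamma(v)^2=-g(v,v)\mathbb{1}$. There is nothing to add or compare.
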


\begin{ex}[{\bf Atiyah-Singer Operator and Spin Laplacian}]
The Dirac operator in the case of spin manifolds $(M,g,S)$ is the
Atiyah-Singer operator $D_{\gamma}^g$ on the sections of the spinor
bundle $\Sigma M$. The Dirac Laplacian
$\Delta_{\gamma}^g:=(D_{\gamma}^g)^2$ is the spin Laplacian.
\end{ex}

\begin{ex}[{\bf Euler and Laplace-Beltrami Operators}]
The Dirac operator in the case of the exterior algebra bundle over
Riemannian manifolds $(M,g)$ is the Euler operator $d+\delta$ on
forms on $M$. The Dirac Laplacian
$\Delta:=(d+\delta)^2=d\delta+\delta d$ is the  Laplace-Beltrami
operator.
\end{ex}


\subsection{Dirac Complexes}

\begin{definition}[{\bf Normalized Orientation}]
Let $(V,\left<\cdot,\cdot\right>,\nabla,\gamma)$ be a Dirac bundle over the
{\it oriented} Riemannian manifold $(M,g)$. We consider a {\it
positively} oriented local orthonormal frame $\{e_1,\dots,e_m\}$
for $TM$. Then the product
\begin{equation}S:={\imath}^{[\frac{m+1}{2}]}\gamma(e_1)\cdots\gamma(e_m)\in\Hom(V)\end{equation}
is called the {\it normalized orientation} of the Dirac bundle.
\end{definition}

\begin{proposition} \label{sym}The normalized
orientation $S$ is well defined and independent of the choice of the
positively oriented local orthonormal frame. Moreover, it has the
following properties:
\begin{enumerate}
\item $S^2=\mathbb{1}$
\item $\nabla S=0$
\item $QS=(-1)^{m-1}SQ$.
\end{enumerate}
\end{proposition}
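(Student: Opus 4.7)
The plan is to reduce every assertion to a computation in the complexified Clifford algebra and then transport it back to the Dirac bundle via the algebra homomorphism $\gamma$. Introduce $\omega_m := \imath^{[(m+1)/2]} e_1 \cdots e_m \in \Cl(\mathbf{R}^m)\otimes \mathbf{C}$, so that in terms of the chosen frame $S$ is simply the image of $\omega_m$ under $\gamma$. For well-definedness I would note that two positively oriented orthonormal frames are related by an element of $\SO(m)$, under whose diagonal action on $\Cl(\mathbf{R}^m)$ the top-degree element $e_1\cdots e_m$ is multiplied by the determinant; restricting to $\SO(m)$ this is $+1$, so $\omega_m$ and hence $S$ does not depend on the chosen positively oriented orthonormal frame.

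For (1), I would square $S$ directly using the Clifford relations $\gamma(e_i)\gamma(e_j) + \gamma(e_j)\gamma(e_i) = -2\delta_{ij}\mathbb{1}$: reversing the second block of $m$ factors past the first collects a sign $(-1)^{m(m-1)/2}$, the $m$ resulting squarings give an additional $(-1)^m$, and the complex scalar contributes $\imath^{2[(m+1)/2]} = (-1)^{[(m+1)/2]}$. A brief case analysis on $m \bmod 4$ combines these three signs into $+1$.

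For (2), it suffices to show $(\nabla_X S)(p) = 0$ at an arbitrary point $p\in M$ and for $X \in T_p M$. I would choose a positively oriented local orthonormal frame $\{e_1,\dots,e_m\}$ around $p$ that is synchronous at $p$, i.e.\ $\nabla_X e_k(p) = 0$ for every $k$ and $X \in T_p M$; such a frame is obtained by parallel transporting an orthonormal basis along radial geodesics from $p$. Applying to $S$ the Leibniz rule that follows from axiom (vi) then yields $(\nabla_X S)(p) = 0$ termwise, and since $p$ is arbitrary $\nabla S \equiv 0$ globally.

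For (3), I would first establish the purely algebraic identity $v \cdot \omega_m = (-1)^{m-1}\omega_m \cdot v$ for every $v \in \mathbf{R}^m$ by expanding $v$ in the basis $\{e_j\}$ and commuting each $e_j$ past $e_1 \cdots e_m$: the one factor parallel to $e_j$ combines with $e_j^2 = -\mathbb{1}$ while each of the other $m-1$ factors contributes a sign. Transporting by $\gamma$, this gives $\gamma(v) S = (-1)^{m-1} S \gamma(v)$ for all $v \in TM$. Using the local expression $Q = \sum_{j=1}^m \gamma(e_j) \nabla_{e_j}$ together with $\nabla S = 0$ from (2), I would then compute $Q(S\varphi) = \sum_j \gamma(e_j) S \nabla_{e_j}\varphi = (-1)^{m-1} S Q \varphi$. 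The only real obstacle is the careful sign bookkeeping in (1) and in the algebraic identity underlying (3); no structural difficulty is anticipated beyond these parity computations.
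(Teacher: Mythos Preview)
The paper states Proposition~\ref{sym} without proof, treating it as a standard background fact (the section's opening refers the reader to \cite{LM89}, \cite{BW93}, \cite{BGV96}, \cite{Ba91}). Your argument is the standard one found in those references and is correct: the well-definedness via the $\SO(m)$ action, the sign bookkeeping for $S^2=\mathbb{1}$, the synchronous-frame argument for $\nabla S=0$, and the Clifford commutation $\gamma(v)S=(-1)^{m-1}S\gamma(v)$ combined with $\nabla S=0$ to obtain $QS=(-1)^{m-1}SQ$ are all sound. One small point of precision: in part (2), $S$ is an endomorphism of $V$, so $\nabla S$ is the induced connection on $\Hom(V)$; your Leibniz-rule step is really $(\nabla_X S)\varphi=\gamma(\nabla^g_X\omega)\varphi$ via axiom (vi), where $\omega=e_1\cdots e_m$ lives in the Clifford bundle and $\nabla^g_X\omega$ vanishes at $p$ termwise in the synchronous frame. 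With that understood, nothing is missing.
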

\begin{definition}{\bf (Dirac Complex)}
Let $Q$ be an operator of Dirac type for the vector bundle $V$
over the Riemannian manifold $(M,g)$ and $T\in \Hom(V)$. $(Q,T)$
is called a {\it complex of Dirac type} if and only if
\begin{enumerate}
\item $T^2=\mathbb{1}$
\item $QT=-TQ$.
\end{enumerate}
\end{definition}
\begin{notation}
\begin{equation}\Pi_{\pm}:=\frac{\mathbb{1}\mp T}{2} \qquad V_{\pm}:=\Pi_{\pm}(V)\qquad Q_{\pm}:=Q|_{C^{\infty}(M,V_{\pm})}.\end{equation}
\end{notation}
\begin{proposition}

\begin{enumerate}

\item $Q_{\pm}:C^\infty(M,V_{\pm})\longrightarrow C^\infty(M,V_{\mp})$
\item $Q=\left[\begin{matrix}0&Q_{-}\\Q_{+}&0\end{matrix}\right]:C^\infty(M,\underbrace{V_{+}\oplus V_{-}}_{V})\longrightarrow C^\infty(M,\underbrace{V_{+}\oplus V_{-}}_{V})$
\begin{equation*}\begin{CD}0@>>>{C^\infty(M,V_{+})}@>{Q_{+}}>>{C^\infty(M,V_{-})}@>{Q_{-}}>> {C^\infty(M,V_{+})} @>>> 0 \end{CD}\end{equation*}
 is a complex i.e. $Q_{-}Q_{+}=0$.
\item $(\Pi_{\pm}Q)^2=0\qquad\Pi_{+}Q+\Pi_{-}Q=Q\qquad\Pi_{+}Q\Pi_{-}Q+\Pi_{-}Q\Pi_{+}Q=Q^2.$

\end{enumerate}

\end{proposition}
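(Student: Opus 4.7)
My plan is to reduce all three assertions to a single operator identity,
\[
Q\Pi_{\pm}=\Pi_{\mp}Q,
\]
obtained by rewriting $QT=-TQ$ once we note that $T^{2}=\mathbb{1}$ makes $\Pi_{\pm}=\frac{1}{2}(\mathbb{1}\mp T)$ complementary idempotents: $\Pi_{\pm}^{2}=\Pi_{\pm}$, $\Pi_{+}\Pi_{-}=\Pi_{-}\Pi_{+}=0$, $\Pi_{+}+\Pi_{-}=\mathbb{1}$. These give the fibrewise splitting $V=V_{+}\oplus V_{-}$ and the corresponding $C^{\infty}(M,V)=C^{\infty}(M,V_{+})\oplus C^{\infty}(M,V_{-})$.

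For (1), I pick $\varphi\in C^{\infty}(M,V_{\pm})$, so $\varphi=\Pi_{\pm}\varphi$; then
\[
Q\varphi=Q\Pi_{\pm}\varphi=\Pi_{\mp}Q\varphi\in C^{\infty}(M,V_{\mp}),
\]
which is the claimed mapping property of $Q_{\pm}$. The block form of $Q$ in (2) then follows by writing $Q=Q\Pi_{+}+Q\Pi_{-}$ in the two-summand decomposition of $C^{\infty}(M,V)$: $Q\Pi_{+}$ lands in $C^{\infty}(M,V_{-})$ and $Q\Pi_{-}$ in $C^{\infty}(M,V_{+})$, giving the stated off-diagonal matrix.

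For (3), every identity is a one-line manipulation using $Q\Pi_{\pm}=\Pi_{\mp}Q$ and $\Pi_{\pm}\Pi_{\mp}=0$:
\[
(\Pi_{\pm}Q)^{2}=\Pi_{\pm}(Q\Pi_{\pm})Q=\Pi_{\pm}\Pi_{\mp}Q^{2}=0,
\]
then $\Pi_{+}Q+\Pi_{-}Q=(\Pi_{+}+\Pi_{-})Q=Q$ is immediate, and expanding $Q^{2}=(\Pi_{+}Q+\Pi_{-}Q)^{2}$ and discarding the two diagonal terms (both zero by the first identity) leaves $\Pi_{+}Q\Pi_{-}Q+\Pi_{-}Q\Pi_{+}Q=Q^{2}$.

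The only place where I anticipate needing to be careful is the complex property ``$Q_{-}Q_{+}=0$'' in (2): as a literal operator identity this is $Q^{2}$ restricted to $C^{\infty}(M,V_{+})$, which is the Dirac Laplacian and generally nonzero. I will therefore read (2) as the statement that $(Q_{+},Q_{-})$ equips $V_{+}\oplus V_{-}$ with the structure of a $\mathbb{Z}/2$-graded differential module, the genuine $d^{2}=\delta^{2}=0$ chain-complex structure being the one exhibited by $d:=\Pi_{-}Q$ and $\delta:=\Pi_{+}Q$ in (3); the latter are the operators actually used in the Hodge-type decomposition of Theorem~\ref{Hodge0}.
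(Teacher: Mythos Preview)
The paper does not supply a proof of this proposition; it is stated and then immediately followed by examples. Your argument for the operator identity $Q\Pi_{\pm}=\Pi_{\mp}Q$ and the derivations of part~(1), of the off-diagonal block form in part~(2), and of all three identities in part~(3) are exactly the intended routine verifications. There is nothing to compare here beyond noting that your write-up makes explicit what the paper leaves implicit.

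Your caveat about the assertion ``$Q_{-}Q_{+}=0$'' is well taken and in fact identifies a genuine misstatement in the paper. With the paper's own notation $Q_{\pm}:=Q|_{C^{\infty}(M,V_{\pm})}$, the composite $Q_{-}Q_{+}$ is literally $Q^{2}$ restricted to $C^{\infty}(M,V_{+})$, hence the Dirac Laplacian on the $V_{+}$-summand, which is elliptic and not zero. Equivalently, in the block picture $Q^{2}=\left[\begin{smallmatrix}Q_{-}Q_{+}&0\\0&Q_{+}Q_{-}\end{smallmatrix}\right]$, so $Q_{-}Q_{+}=0$ would force $Q^{2}=0$. The honest chain-complex statements are precisely those in part~(3): $(\Pi_{\pm}Q)^{2}=0$, and these are the operators $d=\Pi_{-}Q$, $\delta=\Pi_{+}Q$ that the paper uses from Section~\ref{Hodge-Dirac} onward. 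Your proposed reading---that the displayed sequence in (2) records the $\mathbb{Z}/2$-grading interchanged by $Q$, while the actual differential is $\Pi_{\pm}Q$---is the correct reconciliation; you might say this more directly rather than hedging, since the literal claim $Q_{-}Q_{+}=0$ cannot be salvaged.
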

\begin{ex}[{\bf Exterior Algebra in Even Dimensions}]\mbox{}\\
$T:=S$: normalized orientation.\\
$(d+\delta,S)$: {\it signature} complex.
\end{ex}
\begin{ex}[{\bf Exterior Algebra in any Dimensions}]\mbox{}\\
$T$ defined as $T|_{\Lambda^k(T^*M)}:=(-1)^k\mathbb{1}_{\Lambda^k(T^*M)}$\\
$(d+\delta,T)$: {\it (rolled up) De Rham} complex.
\end{ex}
\begin{ex}[{\bf Spinor Bundle in even Dimension}]\mbox{}\\
$T:=S$: normalized orientation.\\
$(D,S)$: spin complex.
\end{ex}
By Proposition \ref{sym} any Dirac bundle over an even dimensional
manifold can be made into a  complex of Dirac type by means of the
normalized orientation. In odd dimensions this is not possible,
because normalized orientation and Dirac operator commute.

The restriction of a Dirac bundle to a one codimensional
submanifold is again a Dirac bundle, as following theorem (cf.
\cite{Gi93} and \cite{Ba96}) shows.
\begin{theorem}\label{DiracBoundary}
Let $(V,\left<\cdot,\cdot\right>,\nabla,\gamma)$ be a Dirac bundle over the
Riemannian manifold $(M,g)$ and let $N\subset M$ be a one
codimensional submanifold with normal vector filed $\nu$. Then $(N,
g|_N)$ inherits a Dirac bundle structure by restriction. We mean by
this that the bundle $V|_N$, the connection $\nabla|_{C^{\infty}(N,
V|_N)}$, the real algebra bundle homomorphism
$\gamma_N:=-\gamma(\nu)\gamma|_{\Cl(N,g|_N)}$, and the Hermitian
(Riemannian) structure $\left<\cdot,\cdot\right>|_N$ satisfy the defining
properties (iv)-(vi). The quadruple $(V|_N, \left<\cdot,\cdot\right>|_N,
\nabla|_{C^{\infty}(N, V|_N)}, \gamma_N)$ is called the
\textbf{Dirac bundle structure induced on $N$} by the Dirac bundle
$(V,\left<\cdot,\cdot\right>,\nabla,\gamma)$ on $M$.
\end{theorem}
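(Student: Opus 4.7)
The plan is to verify the Dirac bundle axioms (iv)--(vi) for the proposed induced data $(V|_N,\<\cdot,\cdot>|_N,\nabla|_N,\gamma_N)$, where $\nabla|_N$ denotes $\nabla$ restricted to $C^\infty(N,V|_N)$ and differentiated in tangential directions $X\in TN$ via any local extension to a tubular neighborhood of $N$ (well-definedness follows because $X$ is tangent to $N$, so two extensions of $\varphi$ agreeing on $N$ yield the same covariant derivative).

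First I would show $\gamma_N$ is a well-defined real algebra bundle homomorphism. For $v\in TN$, orthogonality $v\perp\nu$ together with the Clifford relation yields the anticommutation $\gamma(v)\gamma(\nu)=-\gamma(\nu)\gamma(v)$, hence
\begin{equation*}
\gamma_N(v)^2=\gamma(\nu)\gamma(v)\gamma(\nu)\gamma(v)=-\gamma(\nu)^2\gamma(v)^2=-g(v,v)\mathbb{1}.
\end{equation*}
By the universal property of Clifford algebras, $\gamma_N$ extends uniquely from $TN$ to $\Cl(N,g|_N)$. Axiom (iv) then follows at once from skew-adjointness of $\gamma(v),\gamma(\nu)$ in $V$ combined with anticommutation:
\begin{equation*}
\gamma_N(v)^*=-\gamma(v)^*\gamma(\nu)^*=-\gamma(v)\gamma(\nu)=\gamma(\nu)\gamma(v)=-\gamma_N(v).
\end{equation*}
Axiom (v) is a direct consequence of the Leibnizian property of the ambient $\nabla$ applied to $\<\tilde\varphi,\tilde\psi>$ along $X\in TN$ and evaluated on $N$.

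The crux is axiom (vi), the module derivation property. I would extend $\nu$ to a unit normal vector field $\tilde\nu$ on a tubular neighborhood of $N$ (by geodesic extension) and extend $v,\varphi$ correspondingly. Applying axiom (vi) for the ambient Dirac bundle to $\gamma_N(v)\tilde\varphi=-\gamma(\tilde\nu\cdot\tilde v)\tilde\varphi$, together with the algebra derivation rule $\nabla^g_X(\tilde\nu\cdot\tilde v)=(\nabla^g_X\tilde\nu)\cdot\tilde v+\tilde\nu\cdot(\nabla^g_X\tilde v)$ and the Gauss--Weingarten decompositions
\begin{equation*}
\nabla^g_X\tilde v|_N=\nabla^{g|_N}_Xv+\mathrm{II}(X,v)\nu,\qquad\nabla^g_X\nu=-W(X)\in TN,
\end{equation*}
I would invoke the Clifford identities $\gamma(\nu)^2=-\mathbb{1}$ and anticommutation of $\gamma(\nu)$ with $\gamma(u)$ for $u\in TN$ to re-collect the expanded expression into $\gamma_N(\nabla^{g|_N}_Xv)\varphi+\gamma_N(v)(\nabla|_N)_X\varphi$. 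Once (vi) is established on generating vectors $v\in TN$, it extends multiplicatively to all of $\Cl(N,g|_N)$ by the algebra derivation property of both sides.

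The main obstacle is step (vi): the second-fundamental-form contributions produced by the ambient connection must be accounted for precisely, and the very choice $\gamma_N=-\gamma(\nu)\gamma$ is tailored so that the shape-operator terms arising from the normal component of $\nabla^g_X\tilde v$ and from $\nabla^g_X\nu=-W(X)$ combine coherently with the tangential derivatives to yield the intrinsic module derivation on $(N,g|_N)$. This delicate bookkeeping, together with the identification of the appropriate tangential part of $\nabla$, is the content of the computations carried out in \cite{Gi93} and \cite{Ba96}, to which the proof ultimately reduces.
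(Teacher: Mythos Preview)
The paper does not supply a proof of this theorem; it simply states the result and defers to \cite{Gi93} and \cite{Ba96}. In that sense your proposal, which also ultimately appeals to those references, is aligned with the paper's treatment, and your outline of the verification of (iv) and (v) is correct.

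There is, however, a genuine gap in your handling of axiom (vi). Your claim that the shape-operator contributions ``combine coherently to yield the intrinsic module derivation'' with the \emph{bare} restricted connection $\nabla|_N$ is not correct. Carrying out the computation you describe gives
\[
\nabla_X\bigl(\gamma_N(v)\varphi\bigr)
=\gamma_N\bigl(\nabla^{g|_N}_X v\bigr)\varphi+\gamma_N(v)\,\nabla_X\varphi
+\bigl[\gamma(W(X))\gamma(v)+g(W(X),v)\mathbb{1}\bigr]\varphi,
\]
and the bracketed term equals $\gamma(W(X)\wedge v)$, which does not vanish unless $W(X)$ and $v$ are parallel (so in general only when $N$ is totally geodesic). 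The references the paper cites resolve this by \emph{modifying} the connection on $V|_N$ by a zeroth-order term built from the shape operator, typically $\nabla_X+\tfrac{1}{2}\gamma(\nu)\gamma(\nabla^g_X\nu)$ in one common convention; with that corrected connection, axiom (vi) holds relative to the intrinsic Levi-Civita connection of $(N,g|_N)$. The paper's formulation with the unmodified restriction is somewhat loose on this point, and the later computations in the paper (e.g.\ the hypersurface Dirac formula in Lemma~\ref{lemma54}, taken from \cite{Ba96}) implicitly rely on the correct version, with the second fundamental form entering through the mean-curvature term. So your strategy is right, but the specific cancellation you assert does not occur; the honest proof requires either the modified connection or an explicit acknowledgment that (vi) holds only up to the shape-operator correction.
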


For a spin manifold of arbitrary dimension we will now construct a
vector bundle isomorphism $T$ which anticommutes with the
Atiyah-Singer operator $Q$ making a generic spin bundle to a complex
of Dirac type $(Q,T)$. Inspired by \cite{BGM05}, we embed a given manifold into a cylinder.

\begin{definition}[{\bf Generalized Cylinder}] Let $(M, g, \Spin(M))$ be a spin manifold
of dimension $n$, Riemannian metric $g$ and spin structure $\Spin(M)$. The
manifold $Z:=I\times M$, where $I$ denotes an interval of the real
line, equipped with the Riemannian metric $g^Z(u,x):=du^2\otimes g(x)$ and
with the spin structure $\Spin(Z):=\Spin(I)\times\Spin(M)$, with double covering map
\begin{equation}
\begin{split}
&\pi:\Spin(Z)=\Spin(I)\times\Spin(M)\rightarrow SO(Z)=SO(I)\times SO(M),\\ &\pi:=(\pi|_{\Spin(I)},\pi|_{\Spin(M)})
\end{split}
\end{equation}
is a spin manifold $(Z,g^Z,\Spin(Z))$ termed generalized cylinder, and
$i:SO(M)\rightarrow SO(Z),(e_1,\dots,e_n)\mapsto(\nu,e_1,\dots,e_n)$ denotes the canonical embedding.
\end{definition}
\noindent It can easily proved (cf. \cite{BGM05}, Chapter 5 and \cite{HMR15}, Chapter 2) that
\begin{proposition}\label{propT}
The original spin manifold and the generalized cylinder satisfy
following properties:
\begin{enumerate}
\item $\Spin(M)=\pi^{-1}(i(SO(M)))$.
 \item  $\gamma^{M}$ and $T:=i\gamma^Z\left(\frac{\partial}{\partial u}\right)$  anticommute. In fact, for all $v\in TM$
 \begin{equation}
     \gamma^{M}(v)T=-T\gamma^{M}(v).
 \end{equation}
\item $(\tilde{Q}^M,T)$ is a complex of Dirac type, where $\tilde{Q}^M:=Q^M$ if $n$ is even, and $\tilde{Q}^M:=\diag(Q^M,Q^M)$ if $n$ is odd, is termed the \textit{extrinsic Dirac operator}. In this context $Q^M$ is termed \textit{intrinsic Dirac operator}.
\item $\nabla^MT=0$.
\end{enumerate}
\end{proposition}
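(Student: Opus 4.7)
The plan is to exploit the fact that $M$ sits as a totally geodesic hypersurface in the generalized cylinder $Z=I\times M$ with product metric $g^Z=du^2+g$ and product spin structure, and then to apply Theorem \ref{DiracBoundary}, which identifies the intrinsic Dirac bundle structure on $M$ with the restricted structure from $Z$. Under that identification one has $\gamma^M(v)=-\gamma^Z(\nu)\gamma^Z(v)$ for $v\in TM$, where $\nu=\partial/\partial u$, and the spin connection on $M$ is the restriction of $\nabla^Z$ (with no second fundamental form correction, since the $u$-slices are totally geodesic). Item 1 is pure bookkeeping: since $\dim I=1$, the embedding $i$ lifts uniquely to the product spin structure and $\pi^{-1}(i(SO(M)))$ is by construction the $\Spin(M)$-factor of $\Spin(Z)=\Spin(I)\times \Spin(M)$.

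For item 2 I would run a direct Clifford calculation. Since $v\perp\nu$, the Clifford relation gives $\gamma^Z(v)\gamma^Z(\nu)=-\gamma^Z(\nu)\gamma^Z(v)$, and $\gamma^Z(\nu)^2=-\mathbb{1}$. Writing $T=i\gamma^Z(\nu)$ and substituting $\gamma^M(v)=-\gamma^Z(\nu)\gamma^Z(v)$, one computes $\gamma^M(v)T=-i\gamma^Z(v)$ and $T\gamma^M(v)=+i\gamma^Z(v)$, which proves anticommutation. For item 4 I would invoke the totally geodesic property: the Gauss--Weingarten formula for spinors reduces to $\nabla^M=\nabla^Z|_{TM}$ on the restricted bundle, while $\nabla^Z\gamma^Z=0$ (Dirac bundle axiom on $Z$) and $\nabla^Z\nu=0$ (product metric) immediately yield $\nabla^M T=i\nabla^Z(\gamma^Z(\nu))=0$.

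For item 3, $T^2=(i\gamma^Z(\nu))^2=-\gamma^Z(\nu)^2=\mathbb{1}$ is immediate. The anticommutation $\tilde Q^M T=-T\tilde Q^M$ in the even case follows by combining items 2 and 4: in the local expression $Q^M=\sum_j\gamma^M(e_j)\nabla^M_{e_j}$, each $\gamma^M(e_j)$ anticommutes with $T$ and $T$ is parallel, so $T$ passes through the connection and picks up a sign from every Clifford factor. In the odd-dimensional case, $\dim Z=n+1$ is even so $\Sigma Z$ is reducible and its restriction to $M$ has twice the rank of $\Sigma M$; under the canonical identification $\Sigma Z|_M\cong \Sigma M\oplus\Sigma M$ the extrinsic Dirac operator becomes $\diag(Q^M,Q^M)=\tilde Q^M$, and $T=i\gamma^Z(\nu)$ acts as an off-diagonal involution that swaps the two summands. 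The same computation as in the even case, now with block entries, delivers $\tilde Q^M T=-T\tilde Q^M$.

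The main obstacle I anticipate is the identification $\Sigma Z|_M\cong\Sigma M\oplus\Sigma M$ in odd dimensions and the verification that under this identification $T$ has precisely the off-diagonal form required to make $(\tilde Q^M,T)$ a complex of Dirac type; this is a representation-theoretic bookkeeping step involving the chirality element of $\Cl(Z)$ and requires care to choose signs consistently with the product spin structure introduced in the definition of the generalized cylinder. All other steps reduce to routine Clifford algebra and the totally geodesic property.
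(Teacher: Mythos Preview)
The paper does not actually prove Proposition~\ref{propT}; it merely states the result and defers to \cite{BGM05}, Chapter~5, and \cite{HMR15}, Chapter~2. Your proposal is correct and is precisely the argument one finds in those references: you invoke Theorem~\ref{DiracBoundary} to identify $\gamma^M(v)=-\gamma^Z(\nu)\gamma^Z(v)$, use the totally geodesic embedding of the $u$-slices in the product to conclude $\nabla^M=\nabla^Z|_{TM}$ and $\nabla^Z\nu=0$, and then reduce everything to Clifford algebra. The one place you rightly flag as delicate---the identification $\Sigma Z|_M\cong\Sigma M\oplus\Sigma M$ in odd dimensions and the off-diagonal form of $T$---is exactly the content of \cite{BGM05}, Section~5 (see also \cite{Gin09}), where it is handled by decomposing $\Sigma Z|_M$ into the $\pm i$-eigenbundles of $\gamma^Z(\nu)$ and checking that each summand is isomorphic to $\Sigma M$ as a $\Cl(M)$-module via $\gamma^M$. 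So your sketch is complete and aligned with the cited literature; there is nothing further to compare.
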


\section{Spectral Properties of the Dirac Operator}\label{Spectrum}
We consider Dirac bundles over compact manifolds, possibly with
boundary. The aim of this section is to summarize ``the state of the art'' concerning
the generic results about spectral results, especially in connection with boundary conditions.
 The existence of a regular discrete spectral resolution
for both Dirac and Dirac Laplacian operators under the appropriate
boundary conditions is a special case of the standard elliptic
boundary problems theory developed by Seeley (\cite{Sl66},
\cite{Sl69}) and Greiner (\cite{Gr70}, \cite{Gr71}). The general
references are \cite{Gr96} and  \cite{Ho85}. See \cite{BW93} and
\cite{Gi95} for the specific case of the Dirac and Dirac Lapacian
operators.

\subsection{Dirac and Dirac Laplacian Spectra on manifolds without boundary}
The Dirac operator $Q$ and the Dirac Laplacian $P$ for a Dirac
bundle $V$ over a compact Riemannian manifold  without boundary are
easily seen by Green's formula to be  symmetric operators for the
$C^{\infty}$-sections of Dirac bundle. Taking the completion of the
differentiable sections of $V$ in the Sobolev $H^1-$ and
respectively $H^2$-topology, leads to two selfadjoint operators in
$L^2(V)$.

\begin{theorem}\label{PQwithoutBoundary}
The Dirac $Q$ and the Dirac Laplacian $P$ operators of a Dirac
bundle over a compact Riemannian manifold $M$ without boundary have
a regular discrete spectral resolution with the same eigenspaces. It
exists a sequence  $(\varphi_j,\lambda_j)_{j \in \mathbb{Z}^* }$
such  that $(\varphi_j)_{j \in \mathbb{Z}^* }$ is an orthonormal
basis of $L^2(V)$ and that for every $j \in \mathbb{Z}^* $ it must
hold $Q\varphi_j=\lambda_j \varphi_j$ $P\varphi_j=\lambda_j
^2\varphi_j$ and $\varphi_j\in C^{\infty}(V)$. The eigenvalues of
the Dirac operator $(\lambda_j)_{j \in \mathbb{Z}^* }$ are  a
monotone increasing real sequence converging to $\pm\infty$ for
$j\rightarrow\pm\infty$. The eigenvalues of the Dirac Laplacian are
the squares of the eigenvalues of the Dirac operator and hence not
negative.
\end{theorem}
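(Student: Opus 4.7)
The strategy is to first check that $Q$ is formally self-adjoint and elliptic on $C^\infty(M,V)$, then use elliptic theory on a compact manifold without boundary to extend it to an honest self-adjoint operator with compact resolvent, and finally feed the result into the spectral theorem. First I would verify symmetry of $Q$: for $\varphi,\psi\in C^\infty(M,V)$, using that the connection $\nabla$ is Leibnizian and a Clifford module derivation (axioms (v)--(vi)) together with the skew-adjointness of Clifford multiplication by tangent vectors (axiom (iv)), one gets a pointwise identity
\begin{equation}
\<Q\varphi,\psi\> - \<\varphi,Q\psi\> = \mathrm{div}(X_{\varphi,\psi})
\end{equation}
for a vector field $X_{\varphi,\psi}$ with $\<\gamma(\nu)\varphi,\psi\>$ along $\nu$; since $\partial M=\emptyset$, Stokes/divergence theorem makes this vanish globally, so $Q$ is symmetric. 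Ellipticity follows from the formula $\sigma_L(Q)(x,\xi) = i\gamma(\xi^\sharp)$ already given in the excerpt, since $\gamma(\xi^\sharp)^2 = -g(\xi^\sharp,\xi^\sharp)\mathbb{1}$ is invertible for $\xi\neq 0$. Then $P = Q^2$ is symmetric and has scalar principal symbol $g(\xi^\sharp,\xi^\sharp)\mathbb{1}$, hence is a second order formally self-adjoint elliptic operator.

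Next I would promote symmetry to self-adjointness. On a compact manifold without boundary, standard elliptic theory (Gårding's inequality plus the fact that the minimal and maximal extensions of a symmetric elliptic operator coincide because no boundary conditions compete) shows that the closure of $Q$ is self-adjoint with domain the Sobolev space $H^1(V)$, and similarly the closure of $P$ is self-adjoint with domain $H^2(V)$. To get compact resolvent, I would observe that $Q+i$ is bijective from $H^1(V)$ onto $L^2(V)$ with bounded inverse (by the $H^1$-estimate $\|\varphi\|_{H^1}\le C(\|Q\varphi\|_{L^2}+\|\varphi\|_{L^2})$ coming from Gårding applied to $Q^2+\mathbb{1}$), and then $(Q+i)^{-1}\colon L^2(V)\to L^2(V)$ factors through the compact Rellich--Kondrachov embedding $H^1(V)\hookrightarrow L^2(V)$, hence is compact.

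The spectral theorem for self-adjoint operators with compact resolvent now delivers an orthonormal basis $(\varphi_j)_{j\in\mathbb{Z}^*}$ of $L^2(V)$ consisting of eigenvectors, with real eigenvalues $\lambda_j$ whose only accumulation points are $\pm\infty$. Elliptic regularity applied to the equation $Q\varphi_j=\lambda_j\varphi_j$, a first-order elliptic equation with smooth coefficients on a smooth closed manifold, gives $\varphi_j\in\bigcap_{k\ge 0}H^k(V)=C^\infty(M,V)$ by Sobolev embedding. Each $\varphi_j$ automatically satisfies $P\varphi_j=Q^2\varphi_j=\lambda_j^2\varphi_j$, and the eigenvalues of $P$ are non-negative since $\<P\varphi,\varphi\>=\|Q\varphi\|_{L^2}^2\ge 0$. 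That the eigenspaces of $P$ and $Q$ coincide (after a suitable choice of basis) follows from the fact that the span of the $\varphi_j$ is dense in $L^2(V)$, so the spectral resolution of $P$ obtained from $(\varphi_j,\lambda_j^2)$ must agree with the one given by the spectral theorem applied to $P$.

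The main technical obstacle is the self-adjointness of the closure of $Q$ on $L^2(V)$, i.e.\ ruling out any ambiguity in the choice of extension. On a closed manifold this is classical and reduces to the density of $C^\infty(M,V)$ in $H^1(V)$ together with the $H^1$-coercivity estimate above, but one has to be careful to invoke the standard machinery (Seeley/Hörmander, as cited in the excerpt) rather than attempt an elementary argument. The other subtlety is to argue that the eigenvalue sequence is unbounded in both directions, not just in absolute value; this follows because $Q$ is a first-order operator whose square satisfies Weyl's law $N(\mu,P)\sim c\,\mu^{m/2}$, so the number of eigenvalues of $Q$ in $[-\lambda,\lambda]$ grows like $\lambda^m$, which is impossible if $Q$ were bounded from one side.
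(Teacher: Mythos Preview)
The paper does not actually prove this theorem: it is stated as a standard result, with the surrounding text indicating only that $Q$ and $P$ are symmetric by Green's formula and become self-adjoint after completion in the $H^1$- and $H^2$-topologies, and then referring to the general elliptic theory of Seeley, Greiner, Grubb, and H\"ormander. Your outline is exactly this standard route (Green's formula for symmetry, ellipticity from the symbol, essential self-adjointness and compact resolvent via G\aa rding/Rellich, then the spectral theorem and elliptic regularity), so in approach you are fully aligned with what the paper invokes.

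There is one genuine gap, however, in your final paragraph. You argue that the spectrum of $Q$ is unbounded in both directions because Weyl's law for $P=Q^2$ forces the number of eigenvalues of $Q$ in $[-\lambda,\lambda]$ to grow like $\lambda^m$, ``which is impossible if $Q$ were bounded from one side.'' This inference is incorrect: if $Q$ were bounded below by $c$, the eigenvalues in $[-\lambda,\lambda]$ would simply be those in $[c,\lambda]$, and nothing prevents that count from growing like $\lambda^m$. What you actually need is the refined first-order Weyl asymptotics, which say that the counting functions $N_\pm(\lambda):=\#\{j:0<\pm\lambda_j\le\lambda\}$ are \emph{separately} asymptotic to $c_\pm\lambda^m$ with $c_\pm>0$, the constants coming from the positive and negative eigenspaces of the principal symbol $i\gamma(\xi^\sharp)$ on the cosphere bundle. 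Since $\gamma(\xi^\sharp)^2=-|\xi|^2\mathbb{1}$, the symbol has eigenvalues $\pm|\xi|$ with equal multiplicities, so both $c_+$ and $c_-$ are strictly positive. Alternatively (and more elementarily), you can note that for any unit covector $\xi$ the endomorphism $i\gamma(\xi^\sharp)$ is a self-adjoint involution up to sign, hence has trace zero; a standard symbol/wave-trace argument then rules out one-sided boundedness. Either way, the Weyl law for $P$ alone is not enough.
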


Therefore, for Dirac bundle over a manifold without boundary the
knowledge of the spectrum for the Dirac operator and the Dirac
Laplacian are equivalent. Moreover, in the case of a Dirac complex
the spectrum of the Dirac operator is symmetric with respect to the
origin.

\begin{proposition}\label{symm}
If there is an isomorphism $T$ for the Dirac bundle $V$
anticommuting with the Dirac operator $Q$, then the discrete
spectral resolution of Theorem \ref{PQwithoutBoundary} can be chosen
such that the equalities\,  $\lambda_{-j}=-\lambda_{+j}$\, and
\,$\varphi_{-j}=T\varphi_{+j}$\, hold for every $j\in\mathbb{N}^*$.
In particular, the dimension of the space of harmonic sections is
always even.
\end{proposition}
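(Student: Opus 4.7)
The strategy is to leverage the anticommutation $QT=-TQ$ to produce a symmetric spectral resolution. The key identity is immediate: if $Q\varphi=\lambda\varphi$ then
\begin{equation*}
Q(T\varphi)=-T(Q\varphi)=-\lambda(T\varphi),
\end{equation*}
so $T$ carries the $\lambda$-eigenspace $E_\lambda$ into $E_{-\lambda}$. Since $T$ is a bundle isomorphism, its restriction $T|_{E_\lambda}\colon E_\lambda\to E_{-\lambda}$ is a linear bijection, and in particular opposite eigenspaces of $Q$ share the same finite dimension.

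Given this, I would assemble the basis piece by piece. For each positive eigenvalue $\lambda_j$ I would take the orthonormal basis of $E_{\lambda_j}$ provided by Theorem \ref{PQwithoutBoundary}, label its elements $\varphi_{+j}$, and set $\varphi_{-j}:=T\varphi_{+j}$, rescaled to unit length if $T$ is not already isometric on the eigenspace. By the previous step these lie in $E_{-\lambda_j}$ and span it, and orthogonality across distinct eigenvalues is automatic from the selfadjointness of $Q$. The union over all positive eigenvalues thus yields an orthonormal system indexed by $\pm j$ in exactly the claimed symmetric fashion.

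For the harmonic sections $T$ sends $\ker Q$ to itself rather than to a distinct eigenspace, so a separate pairing is needed. Under the Dirac-complex hypothesis $T^2=\mathbb{1}$ implicit in the context, one has an orthogonal splitting $\ker Q=K_+\oplus K_-$ into $\pm 1$-eigenspaces of $T$. A basis vector $\xi\in K_+$ cannot itself play the role of $\varphi_{+j}$ since $T\xi=\xi$ coincides with $\xi$, so my plan is to form mixed pairs
\begin{equation*}
\varphi_{+j}:=\tfrac{1}{\sqrt{2}}(\xi+\eta),\qquad \varphi_{-j}:=T\varphi_{+j}=\tfrac{1}{\sqrt{2}}(\xi-\eta),
\end{equation*}
with $\xi\in K_+$ and $\eta\in K_-$ unit vectors, and iterate. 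Exhausting both $K_+$ and $K_-$ simultaneously delivers the required symmetric enumeration and forces $\dim\ker Q$ to be even, which is the ``in particular'' conclusion.

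The central obstacle is precisely establishing $\dim K_+=\dim K_-$, so that this iteration terminates with no leftover unpaired harmonic spinor; this does not follow from $T^2=\mathbb{1}$ and selfadjointness of $T$ in isolation. I would therefore invoke the additional structure of the concrete operator $T$ supplied by the generalised-cylinder construction of Proposition \ref{propT}, namely $T=i\gamma^Z(\partial/\partial u)$, whose interaction with intrinsic Clifford multiplication (and, in odd dimensions, action on the doubled bundle, where the kernel dimension is automatically even) provides the intertwining $K_+\cong K_-$. Making this additional structural input explicit is what closes the argument.
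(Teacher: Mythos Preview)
The paper states this proposition without proof in its review Section~\ref{Spectrum}, so there is no argument in the paper to compare against directly. Your treatment of the nonzero spectrum is the standard one and is correct, with the minor caveat that one needs $T$ to be isometric for the images $T\varphi_{+j}$ to remain an orthonormal set inside $E_{-\lambda_j}$; mere rescaling does not repair a loss of orthogonality. This holds in all of the paper's examples because $T$ is built from Clifford multiplications, but it is worth making explicit.

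Your caution about the kernel is well founded---indeed the ``in particular'' clause fails at the stated level of generality. On $M=\mathbb{CP}^2$ with the exterior-algebra Dirac bundle and the rolled-up De Rham involution $T|_{\Lambda^k}=(-1)^k\mathbb{1}$ (which the paper itself lists as an example of a Dirac complex), one has $\dim\ker(d+\delta)=b_0+b_2+b_4=3$, which is odd, and every harmonic form lies in $K_+$; no orthonormal pairing $\varphi_{-j}=T\varphi_{+j}$ with $\varphi_{-j}\neq\varphi_{+j}$ is possible there. So the proposition as written is too strong, and your instinct to supply additional structure is exactly right. Your proposed fix via the concrete $T=i\gamma^Z(\partial_u)$ of Proposition~\ref{propT} points in the correct direction---in the odd-dimensional doubled setting $T$ acts off-diagonally on $\Sigma M\oplus\Sigma M$, which does force $\dim K_+=\dim K_-$---but your final paragraph only gestures at the mechanism rather than carrying it through, so the argument remains open precisely at the spot you yourself flagged.
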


\begin{remark}An interesting consequence of Proposition \ref{symm}
and of Proposition \ref{propT} is that the spectrum of the \textit{extrinsic} classical
Dirac operator is symmetric with respect to the origin in any
dimension. The spectrum of the \textit{intrinsic} classical Dirac operator is always symmetric in even dimensions.  In odd dimensions nothing can be said a priori: there are cases, where the spectrum  of the intrinsic Dirac operator is not symmetric as Berger's spheres in dimension $\equiv 3$ mod $4$ (\cite{Ba96}), or some of the three dimensional compact Bieberbach manifolds beside the torus (see \cite{Pf00} for details), and cases where it is symmetric as Berger's spheres in dimension $\equiv 1$ mod $4$.
\end{remark}

\subsection{Dirac and Dirac Laplacian Spectra on Manifolds with Boundary}
The case of manifolds with boundary is more complex and the spectra
of the Dirac and Dirac Laplacians are no more equivalent as they are
in the boundaryless case. Moreover, while for the Dirac Laplacian it
is always possible to find local elliptic boundary conditions
allowing for a discrete spectral resolution, this is not always true
for the Dirac operator. The Dirac Laplacian $P$ for a Dirac bundle
$V$ over a compact Riemannian manifold with boundary is easily seen
by Green's formula to be a symmetric operator for the
$C^{\infty}$-sections of Dirac bundle if we impose the Dirichlet
boundary condition $B_D\varphi:=\varphi|_{\partial M}=0$ or the
Neumann boundary condition
$B_N\varphi=\nabla_{\nu}\varphi|_{\partial M}=0$. Taking the
completion of the differentiable sections of $V$ satisfying the
boundary conditions in the Sobolev $H^2$-topology, leads to a
selfadjoint operator in $L^2(V)$.
\begin{theorem}\label{PND}
The Dirac Laplacian $P$ of a Dirac bundle over a compact Riemannian
manifold $M$ with boundary under the Neumann or the Dirichlet
condition has a regular discrete spectral resolution
$(\varphi_j,\lambda_j)_{j \ge 0}$. This means that
$(\varphi_j)_{j\ge0}$ is an orthonormal basis of $L^2(V)$ and that
for every $j\ge 0$ it must hold $P\varphi_j=\lambda_j \varphi_j$,
$\varphi_j\in C^{\infty}(V)$, and $B\varphi_j=0$ for either $B=B_D$
or $B=B_N$. The eigenvalues $(\lambda_j)_{j\ge0}$ are a monotone
increasing real sequence bounded from below and converging to
infinity. The Dirichlet eigenvalues are all strictly positive. The
Neumann eigenvalues are all but for a finite number strictly
positive.
\end{theorem}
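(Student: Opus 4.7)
The plan is to reduce everything to the classical theory of elliptic boundary value problems of Seeley and Greiner, whose applicability to the present situation rests on three ingredients: the Laplace-type nature of $P$, the Lopatinski-Shapiro ellipticity of the two boundary conditions, and a Green's formula guaranteeing symmetry. By the proposition stated earlier in the text, $\sigma_L(P)(x,\xi)=g_x(\xi^{\sharp},\xi^{\sharp})\mathbb{1}_{V_x}$, so $P$ is an operator of Laplace type and therefore interior-elliptic. The first step is thus to check that $B_D$ and $B_N$ define elliptic boundary conditions for $P$. For $B_D$ this is completely standard since Dirichlet is the canonical elliptic boundary condition for Laplace-type operators. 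For $B_N$ one freezes coefficients at a boundary point, writes the frozen operator in Fermi coordinates near $\partial M$, and verifies that the only solution of the resulting ODE which decays at infinity and satisfies $\nabla_{\nu}\varphi|_{0}=0$ is the trivial one, identically as in the function case.

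The second step is to invoke Green's formula for $P$, which for $\varphi,\psi\in C^{\infty}(M,V)$ gives
\begin{equation}
\<P\varphi,\psi> - \<\varphi,P\psi> = \int_{\partial M}\br{\<\nabla_{\nu}\varphi,\psi> - \<\varphi,\nabla_{\nu}\psi>}\,d\sigma,
\end{equation}
so that the right-hand side vanishes whenever both sections satisfy either $B_D$ or $B_N$. The closure of $P$ on the natural $H^2$ domain determined by the boundary condition is therefore self-adjoint and, by a straightforward use of the same Green's formula together with the identity $P=Q^2$, bounded from below. The third step is then purely formal: apply the Seeley-Greiner theory for self-adjoint elliptic boundary problems to obtain a compact resolvent $(P+c)^{-1}$ for $c\gg 0$, hence a discrete spectrum of finite-multiplicity eigenvalues accumulating only at $+\infty$. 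Elliptic regularity up to the boundary upgrades the eigensections to $C^{\infty}(V)$.

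The final step concerns the sign of the eigenvalues. Using $P=Q^2$ and Green's formula once more one obtains
\begin{equation}
\lambda_j\norm{\varphi_j}^2 = \<Q^2\varphi_j,\varphi_j> = \norm{Q\varphi_j}^2 + \int_{\partial M}\<\gamma(\nu)Q\varphi_j,\varphi_j>\,d\sigma.
\end{equation}
For the Dirichlet problem the boundary term vanishes because $\varphi_j|_{\partial M}=0$; thus $\lambda_j=0$ forces $Q\varphi_j=0$ together with $\varphi_j|_{\partial M}=0$, and the unique continuation property enjoyed by operators of Dirac type then yields $\varphi_j\equiv 0$, contradicting normalisation. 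Hence all Dirichlet eigenvalues are strictly positive. Under $B_N$ there is no obstruction to having harmonic sections, but the $\lambda=0$ eigenspace is an eigenspace of the compact-resolvent operator, so it is finite dimensional, and only finitely many eigenvalues can vanish.

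The main obstacle is really the verification of the Lopatinski-Shapiro condition for the Neumann problem applied to a generic Dirac Laplacian (which differs from the connection Laplacian by a zero-order curvature endomorphism that is irrelevant at the level of the principal boundary symbol) together with citing the unique continuation property for $Q$; everything else is a bookkeeping reduction to the Seeley-Greiner framework already available in the literature cited in the excerpt.
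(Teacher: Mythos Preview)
The paper does not actually prove this theorem: Section~\ref{Spectrum} is explicitly a survey of ``the state of the art'', and Theorem~\ref{PND} is stated without proof as a special case of the Seeley--Greiner theory of elliptic boundary value problems, with references to \cite{Sl66}, \cite{Sl69}, \cite{Gr70}, \cite{Gr71}, \cite{Gr96}, \cite{Ho85}, \cite{BW93}, \cite{Gi95}. Your proposal is therefore not so much an alternative to the paper's proof as a fleshed-out version of the very route the paper points to: verify Lopatinski--Shapiro for $B_D$ and $B_N$, use Green's formula for symmetry and semi-boundedness, invoke compact resolvent and elliptic regularity up to the boundary. This is correct and is exactly what the cited references do.

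One small remark on your last paragraph: for the Neumann case you only argue that the zero eigenspace is finite dimensional, but the statement allows for finitely many \emph{non-positive} (possibly negative) eigenvalues. This is of course automatic once you have established that the spectrum is discrete, bounded from below, and accumulates only at $+\infty$, so there are at most finitely many eigenvalues in $(-\infty,0]$; it would be cleaner to say this explicitly rather than focusing on $\lambda=0$ alone. For the lower bound itself under $B_N$, the Bochner identity $P=\nabla^{*}\nabla+R$ combined with the vanishing of the boundary term $\int_{\partial M}\<\nabla_{\nu}\varphi,\varphi>$ gives $\<P\varphi,\varphi>\ge -\|R\|_{\infty}\|\varphi\|^{2}$ directly, which is perhaps more transparent than the route via $P=Q^{2}$ you allude to.
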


The situation for the Dirac operator is more subtle. Altough it is
-again by Green's formula- a symmetric operator under the Dirichlet
boundary condition, it is not selfadjoint. As a matter of fact the
Dirichlet boundary condition is elliptic for the Dirac Laplacian but
\textit{not} for the Dirac operator. If we are looking for
\textit{local} elliptic boundary conditions for the Dirac operator,
we need to introduce the following

\begin{definition} Let $(V,\left<\cdot,\cdot\right>,\nabla,\gamma)$ be a Dirac bundle over a manifold $M$
with boundary $\partial M$. The isomorphism $\chi\in
\Hom(V|_{\partial M})$ is called \textit {boundary chirality
operator} for the Dirac bundle if satisfies $\chi^2=\mathbb{1}$ and
anticommutes with the Clifford multiplication, i.e. $\chi \gamma(v)
+ \gamma(v)\chi=0$ for any $v\in TM|_{\partial M}$. The
corresponding boundary condition operator is given by
$B_{\pm}:=\frac{1}{2}(\mathbb{1}\mp\chi\gamma(\nu))$.
\end{definition}
In the \textit{even} dimensional case one can always find boundary
chirality operators for any Dirac bundle: it suffices to choose
$\chi:=S|_{\partial M}$, where $S$ denotes the normalized
orientation. For the special case of the exterior algebra bundle in
\textit{any} dimension the choice $\chi:=\exte(\nu)+\inte(\nu)$
leads to the \textit{absolute} and \textit{relative} boundary
conditions for differential forms which are ellipitic for the Euler
operator $d+\delta$.\par In the \textit{odd} dimensional case there are
obstructions to the existence of \textit{local}
boundary chirality operators for Dirac bundles. As a matter of fact,
if there exist a local elliptic boundary condition for the Dirac operator,
then $\tr(S)=0$. The non vanishing of the trace of the normalized orientation,
is therefore the topological obstruction, termed the \textit{Atiyah-Bott obstruction},
for the existence of local elliptic boundary conditions for the full Dirac Operator.
In even dimension this obstruction always vanishes because
the full Dirac operator and the normalized orientation always
anticommute. In odd dimensions the obstruction for the full Dirac operator can or cannot vanish.
It vanishes for the classic Dirac operator.
For the chiral Dirac operator, defined on the sections of the eigenbundles of the normalized
orientation the situation is complementary. In odd dimension the obstruction vanishes,
while in even ones it does not, see \cite{Gi84} page 248 and \cite{Gi95} page 102.
\par An elliptic boundary condition for both full and chiral Dirac operator always exists in
\text{any} dimension, but it is defined by mean of a zero order
pseudodifferential operator, the spectral projections of the Dirac
operator on the boundary. This is the famous Atiyah-Patodi-Singer
boundary condition (see \cite{Sl66}, \cite{BW93})). In a
neighbourhood of the boundary $\partial M$ it is possible to
decompose the Dirac operator as
\begin{equation}Q=\gamma(\nu)(\nabla_{\nu}+A).\end{equation}
Remark that it is \textit{not} necessary to assume that the
geometric structures are a product on this neighbourhood. The
operator $A|_{\partial M}$ is an operator of Dirac type for
$V|_{\partial M}$ over the boundaryless manifold $\partial M$. The
operator $A_{\APS}:=A|_{\partial M}+\frac{1}{2}H\mathbb{1}$, where $H$
denotes the mean curvature of the boundary, is an operator of Dirac
type for $\partial M$ and, by Theorem \ref{PQwithoutBoundary}, it
has a discrete regular spectral resolution $(\psi_j,\mu_j)_{j\ge
0}$. The subspace of $L^2(V)$ defined by
$E_{\mu}(A_{\APS}):=\ker(A_{\APS}-\mu\mathbb{1})$ is the eigenspace of
$A_{\APS}$ if $\mu$ is in the spectrum of $A_{\APS}$ and the zero
subspace otherwise.

\begin{definition}
Let $(V,\left<\cdot,\cdot\right>,\nabla,\gamma)$ be a Dirac bundle over the
{\it oriented} Riemannian manifold $(M,g)$ with Dirac operator $Q$
and normalized orientation $S$. The \textit{generalized
Atiyah-Patodi-Singer boundary condition} for $Q$ is given by
$B_{\APS}(\varphi|_{\partial M})=0$, where $B_{\APS}$ denotes the
orthogonal projection in $L^2(V|_{\partial M})$ onto
\begin{equation}\bigoplus_{\mu<0}E_{\mu}(A_{\APS})\oplus
\frac{1}{2}(\mathbb{1}-S)(E_0(A_{\APS})).\end{equation}
\end{definition}

The Dirac operator $Q$ for a Dirac bundle $V$ over a compact
Riemannian manifold  with boundary is easily seen by Green's formula
to be a symmetric operator for the $C^{\infty}$-sections of Dirac
bundle if we impose the boundary conditions $B_{\pm}$ induced by a
boundary chirality operator or by the generalized $\APS$ boundary
condition.  Taking the completion of the differentiable sections of
$V$ satisfying the boundary condition $B$ in the Sobolev
$H^1$-topology, leads to a selfadjoint operator in $L^2(V)$. Of
course, the associated first order boundary conditions for the Dirac
Laplacian are elliptic as well and lead to a self adjoint operator
with pure point spectrum if we define the domain of $P$ as the
completion of the differentiable sections of $V$ satisfying the
boundary conditions $B\oplus BQ$ in the Sobolev $H^2$-topology. In
\cite{FS98} it is given an elementary proof (with no reference to
the calculus of elliptic pseudodifferential operators as in
\cite{Ho85} or \cite{BdM71}) of the following result:

\begin{theorem}\label{QPWithBoundary}
The Dirac $Q$ and the Dirac Laplacian $P$ operators of a Dirac
bundle over a compact Riemannian manifold $M$ with boundary have
under the boundary conditions $B$ and $B\oplus BQ$ respectively, for
either $B=B_{\pm}$, (if a boundary chirality operator exists), or
$B=B_{\APS}$ a regular discrete spectral resolution with the same
eigenspaces. It exists a sequence $(\varphi_j,\lambda_j)_{j \in
\mathbb{Z}^* }$ such that $(\varphi_j)_{j \in \mathbb{Z}^* }$ is an
orthonormal basis of $L^2(V)$ and that for every $j \in \mathbb{Z}^*
$ it must hold $Q\varphi_j=\lambda_j \varphi_j$
$P\varphi_j=\lambda_j ^2\varphi_j$, $\varphi_j\in C^{\infty}(V)$,
$B(\varphi_j|_{\partial M})=0$ and $B((Q\varphi_j)|_{\partial
M})=0$. The eigenvalues of the Dirac operator $(\lambda_j)_{j \in
\mathbb{Z}^* }$ under the boundary condition $B$ are  a monotone
increasing real sequence converging to $\pm\infty$ for
$j\rightarrow\pm\infty$. The eigenvalues of the Dirac Laplacian are
the squares of the eigenvalues of the Dirac operator and hence not
negative.
\end{theorem}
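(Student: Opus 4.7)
The plan is to establish the theorem in four steps: (i) verify symmetry of $Q$ via Green's formula under each prescribed boundary condition $B$, (ii) verify the Shapiro--Lopatinski ellipticity of the boundary value problems $(Q,B)$ and $(P, B\oplus BQ)$, (iii) deduce self-adjointness together with compactness of the resolvent, and (iv) connect the spectral resolutions of $Q$ and $P$.

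For step (i), Green's formula for a Dirac operator on a manifold with boundary reads
\begin{equation*}
\langle Q\varphi,\psi\rangle_{L^2} - \langle\varphi,Q\psi\rangle_{L^2} = -\int_{\partial M}\langle\gamma(\nu)\varphi,\psi\rangle\, d\sigma.
\end{equation*}
When $B = B_\pm = \tfrac{1}{2}(\mathbb{1} \mp \chi\gamma(\nu))$, the operator $\chi\gamma(\nu)$ is self-adjoint and satisfies $(\chi\gamma(\nu))^2 = \mathbb{1}$ (since $\chi^2=\mathbb{1}$, $\gamma(\nu)^2 = -\mathbb{1}$, and $\chi$ and $\gamma(\nu)$ anticommute), while $\gamma(\nu)$ itself anticommutes with $\chi\gamma(\nu)$ and therefore interchanges the two eigenbundles. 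Sections in the same eigenbundle are thus pointwise orthogonal along $\partial M$, so the boundary integral vanishes whenever $B_\pm\varphi = B_\pm\psi = 0$. For $B = B_{\APS}$, symmetry is the classical Atiyah--Patodi--Singer argument, built into the very definition of the spectral projector applied to the tangential operator $A_{\APS}$ that appears in the normal decomposition $Q = \gamma(\nu)(\nabla_\nu + A)$.

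For step (ii), ellipticity is checked at each boundary point $x$ and each cotangent covector $\xi \neq 0$ tangent to $\partial M$. Using the normal form above, the model problem is the ODE $(\partial_t + A(x,\xi))u(t) = 0$ on the half line, and one requires the space of $L^2(0,\infty)$-solutions to form a complement of $\ker B$ inside the fiber of $V|_{\partial M}$. In the chirality-induced case this reduces to a rank count for the $\pm 1$-eigenbundles of $\chi\gamma(\nu)$, which have equal rank by the anticommutation with $\gamma(\nu)$. In the APS case the condition is automatic because $B_{\APS}$ is defined precisely as the spectral projector picking out the decaying solutions of the model ODE. The ellipticity of $(P, B\oplus BQ)$ then follows from $P = Q^2$ by factorization, the extra row $BQ$ being the consistency condition ensuring that $Q$ maps the first-order elliptic domain into itself.

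For step (iii), ellipticity combined with symmetry yields a self-adjoint realization with $H^1$-domain for $Q$ and $H^2$-domain for $P$, and the Rellich embedding $H^1(V) \hookrightarrow L^2(V)$ renders the resolvent compact. The spectral theorem then produces the orthonormal basis $(\varphi_j)_{j\in\mathbb{Z}^*}$ with real eigenvalues $\lambda_j \to \pm\infty$, and elliptic regularity upgrades each $\varphi_j$ to a $C^\infty$ section. For step (iv) one computes $P\varphi_j = Q^2\varphi_j = \lambda_j^2\varphi_j$ and $B(Q\varphi_j) = \lambda_j B\varphi_j = 0$, so the same basis diagonalizes $P$ under $B\oplus BQ$; conversely, any $P$-eigensection for $\mu > 0$ splits as $\tfrac{1}{2}(\varphi + \mu^{-1/2}Q\varphi) \oplus \tfrac{1}{2}(\varphi - \mu^{-1/2}Q\varphi)$ into $\pm\sqrt{\mu}$-eigensections of $Q$, showing that the eigenspaces coincide. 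The main obstacle, if one insists on the elementary route taken in \cite{FS98}, is step (ii) for the APS condition: one must reprove ellipticity by hand through direct analysis of the normal ODE together with the discrete spectrum of $A_{\APS}$, without appealing to the pseudodifferential calculus employed in \cite{Ho85} or \cite{BdM71}.
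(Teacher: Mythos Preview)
The paper does not actually supply a proof of this theorem: it appears in the survey Section~\ref{Spectrum}, where the result is stated and attributed to the general elliptic boundary-problem theory of Seeley and Greiner (with \cite{Gr96}, \cite{Ho85}, \cite{BW93}, \cite{Gi95} as references), and to \cite{FS98} for an elementary proof avoiding the pseudodifferential calculus. There is therefore no in-paper argument to compare against.

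Your four-step outline---Green's formula for symmetry, Shapiro--Lopatinski ellipticity, compact resolvent via Rellich, and the algebraic link between the spectra of $Q$ and $P=Q^2$---is exactly the standard route these references take, and the details you give are correct. One small remark: in step~(ii) you treat $B_{\APS}$ alongside the local conditions $B_\pm$, but the classical Shapiro--Lopatinski criterion is formulated for \emph{local} (differential) boundary operators, whereas $B_{\APS}$ is a zeroth-order pseudodifferential projector. You implicitly acknowledge this in your final paragraph, but it is worth stating up front that for $B_{\APS}$ one either invokes the Seeley/Boutet de Monvel framework for pseudodifferential boundary problems, or argues directly (as in \cite{FS98} or \cite{BW93}) by expanding in eigenmodes of $A_{\APS}$ and solving the resulting ODEs on a collar. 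Either way your sketch is sound and matches what the cited literature does.
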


Remark that the Dirac operator under the complementary
$\APS$-boundary condition, that, is the orthogonal projection from
$L^2(V)$ onto
\begin{equation}\bigoplus_{\mu>0}E_{\mu}(A_{\APS})\oplus
\frac{1}{2}(\mathbb{1}+S)(E_0(A_{\APS})),\end{equation} is only
symmetric but \textit{not selfadjoint} and thus must not have a
discrete real spectrum. The complementary $\APS$-boundary condition
is \textit{not elliptic}.
\par Extending the result in the boundaryless case, for a  Dirac
complex $(Q, T)$ preserving the boundary condition $B$, that is,
where $T$ anticommutes with $B$, the spectrum of the Dirac operator
is symmetric with respect to the origin.

\begin{proposition}\label{QTB}
If there is an isomorphism $T$ for the Dirac bundle $V$
anticommuting with the Dirac operator $Q$ and commuting with the
boundary condition $B$ for either $B=B_{+}$ or $B=B_{-}$ or
$B=B_{\APS}$, then the discrete spectral resolution of Theorem
\ref{QPWithBoundary} can be chosen such that the equalities\,
$\lambda_{-j}=-\lambda_{+j}$\, and \,$\varphi_{-j}=T\varphi_{+j}$\,
hold for every $j\in\mathbb{N}^*$. In particular, the dimension of
the space of harmonic sections satisfying the boundary condition $B$
is always even.
\end{proposition}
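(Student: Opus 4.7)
The plan is to carry over the argument of Proposition \ref{symm} to the boundary setting; the genuinely new ingredient is that $[T,B]=0$ together with $\{T,Q\}=0$ implies that $T$ leaves invariant the domain of the self-adjoint realization described in Theorem \ref{QPWithBoundary}. I would first check this domain invariance: for the $L^2$-realization of $Q$ under the condition $B$, any section $\varphi$ in the domain satisfies $B(\varphi|_{\partial M})=0$, hence $B((T\varphi)|_{\partial M})=T(B\varphi|_{\partial M})=0$, so $T\varphi$ again lies in the domain. For the Dirac Laplacian realization, where one further requires $B((Q\varphi)|_{\partial M})=0$, combining the two hypotheses yields $B((Q(T\varphi))|_{\partial M})=-B((TQ\varphi)|_{\partial M})=-T(B((Q\varphi)|_{\partial M}))=0$, so $T$ preserves this larger domain as well.

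Once domain preservation is established, the anticommutation $QT=-TQ$ immediately gives $Q(T\varphi)=-\lambda T\varphi$ whenever $Q\varphi=\lambda\varphi$. Since $T$ is a bundle isomorphism of the Dirac bundle with $T^2=\mathbb{1}$, the induced map on $L^2(V)$ is a unitary involution, hence it intertwines isometrically the $\lambda$-eigenspace of the self-adjoint realization with the $(-\lambda)$-eigenspace. To build the resolution I would fix orthonormal bases of the positive eigenspaces $E_{\lambda_j}$, $\lambda_j>0$, and define their negative counterparts by $\varphi_{-j}:=T\varphi_{+j}$; orthogonality between vectors of opposite index then follows from the self-adjointness of $Q$. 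After relabelling with a single index $j\in\mathbb{Z}^*$ in accordance with Theorem \ref{QPWithBoundary}, this produces the required identities $\lambda_{-j}=-\lambda_{+j}$ and $\varphi_{-j}=T\varphi_{+j}$.

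The remaining piece is the $0$-eigenspace $\mathcal{H}_B$ of harmonic sections satisfying $B$. By the same invariance, $T$ restricts to a unitary involution on $\mathcal{H}_B$, so this finite-dimensional space splits as $\mathcal{H}_B=\mathcal{H}_B^{+}\oplus\mathcal{H}_B^{-}$ into the $\pm 1$-eigenspaces of $T$. The pairing $\varphi\leftrightarrow T\varphi$ can be implemented on an orthonormal basis of $\mathcal{H}_B$ that $T$ swaps in pairs precisely when $\dim\mathcal{H}_B^{+}=\dim\mathcal{H}_B^{-}$, in which case $\dim\mathcal{H}_B$ is even. I expect this dimensional balance to be the main obstacle in the proof: it amounts to the vanishing of the index of the chiral restriction $Q_{+}\colon C^{\infty}(M,V_{+})\to C^{\infty}(M,V_{-})$ endowed with the boundary condition induced from $B$. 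I would deduce it from the fact that $Q_{+}$ and $Q_{-}$ are mutual formal adjoints in the presence of $B$, together with the explicit cylindrical structure of $T$ from Proposition \ref{propT} (or its analogues used in the paper), which exhibits a canonical unitary between $\ker Q_{+}$ and $\ker Q_{-}$ and thereby closes the argument.
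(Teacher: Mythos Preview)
The paper states this proposition without proof, presenting it as the boundary analogue of Proposition~\ref{symm} (which is likewise unproved). Your argument for the nonzero spectrum is the natural one and matches what the paper implicitly has in mind: the commutation $[T,B]=0$ ensures $T$ preserves the domain of the self-adjoint realization, and then $\{T,Q\}=0$ gives the isometric pairing $E_\lambda\leftrightarrow E_{-\lambda}$ for $\lambda\neq 0$.

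There are, however, two genuine gaps in your write-up. First, you assert that ``$T$ is a bundle isomorphism \dots\ with $T^2=\mathbb{1}$, hence the induced map on $L^2(V)$ is a unitary involution.'' Unitarity does not follow from $T^2=\mathbb{1}$ alone; you need $T$ to preserve the Hermitian structure $\langle\cdot,\cdot\rangle$. This holds for all the concrete $T$'s the paper uses (the normalized orientation $S$ and the cylindrical $T=i\gamma^Z(\partial_u)$ of Proposition~\ref{propT}), but it must be stated as a hypothesis or verified, not deduced from the involution property. Without it, $\{T\varphi_{+j}\}$ need not be orthonormal and the construction fails.

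Second, and more seriously, your treatment of the kernel is incomplete. You correctly identify that the pairing $\varphi_{-j}=T\varphi_{+j}$ on $\mathcal{H}_B$ forces $\dim\mathcal{H}_B^{+}=\dim\mathcal{H}_B^{-}$, which is exactly the vanishing of the index of the chiral operator $Q_{+}$ under the boundary condition induced by $B$. But your proposed justification---invoking the cylindrical $T$ of Proposition~\ref{propT}---only covers that particular isomorphism, whereas Proposition~\ref{QTB} is stated for an arbitrary $T$ with the listed properties. The sentence ``$Q_{+}$ and $Q_{-}$ are mutual formal adjoints'' gives $\mathrm{index}(Q_+)=\dim\ker Q_+-\dim\ker Q_-$, which is precisely the quantity you must show is zero; it does not by itself prove the vanishing. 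You would need an additional symmetry (for instance, a second operator on $V$ anticommuting with $T$ and commuting with $Q$ and $B$) to force the index to vanish in general, and the paper does not supply one. As written, the ``in particular'' clause about evenness remains unproved both in your argument and in the paper.
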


The local boundary conditions $B_{\pm}$ defined by mean of a
boundary chirality operator $\chi\in\Hom(V|_{\partial M})$ are
preserved by the Dirac complex $(Q, T)$ on $V$ if and only if
$\gamma(\nu)\chi$ and $T|_{\partial M}$ commute. This is always the
case for a Dirac bundle in \textit{even} dimensions, if we choose
$T:=S$ and $\chi:=S|_{\partial M}$, where $S$ is the normalized
orientation of the Dirac bundle. A special case, where Proposition
\ref{QTB} in \textit{any} dimension for local elliptic boundary
conditions, is the De Rham complex with either the absolute or
relative boundary conditions.\par The global boundary condition
$B_{\APS}$ defined by mean of the projection onto the eigenspaces of
the non positive eigenvalues of $A_{\APS}$ are preserved by the
Dirac complex $(Q, T)$ on $V$ if and only if $A_{\APS}$ and
$T|_{\partial M}$ commute. This is always the case for a Dirac
bundle in \textit{any} dimension, if we choose $T:=S$, where $S$ is
the normalized orientation of the Dirac bundle.

\section{Dirac Cohomology and Hodge Theory under Boundary Conditions}\label{Hodge-Dirac}
In this section we will prove Theorem \ref{Hodge0}. We will have to introduce for Dirac bundles concepts which mimick the situation for differential forms like derivation, coderivation, absolute and relative boundary conditions.
\begin{proposition}
Let $(V,\left<\cdot,\cdot\right>,\nabla,\gamma)$ be a Dirac bundle over
the Riemannian manifold $(M,g)$ with a bundle isomorphism $T$ on $V$ such that
$\overline{\gamma}:=iT\gamma$ anticommutes with $\gamma$ and with
the Dirac operator $Q$. The tuple $(V,\left<\cdot,\cdot\right>,\nabla,\gamma,
\overline{\gamma})$ defines a $(1,1)$-Dirac bundle structure with
corresponding Dirac operators $Q$ and $\overline{Q}$. The operators
\begin{equation}
d:=\frac{1}{2}(Q-i\overline{Q})=\frac{\mathbb{1}+T}{2}Q
\quad\text{and}\quad
\delta:=\frac{1}{2}(Q+i\overline{Q})=\frac{\mathbb{1}-T}{2}Q
\end{equation}
are called {\bf derivative and coderivative operators} on M and have following properties
\begin{enumerate}
\item The derivative defines a complex: $d^2=0$.
\item The coderivative defines a complex: $\delta^2=0$.
\item The Dirac operator can be decomposed as $Q=d+\delta$.
\item  The Dirac Laplacian can be decomposed as $P:=Q^2=d\delta+\delta d$.
\end{enumerate}

\noindent The zero-order boundary operators
\begin{equation}
B_{\pm}:=\frac{\mathbb{1}\mp T\gamma(\nu)}{2}
\end{equation}
define the {\bf absolute $B_{-}$ and relative $B_{+}$
boundary conditions} on $\partial M$ for the Dirac operator $Q$ and have following properties
\begin{enumerate}
\item $B_{+}\oplus B_{-}=\mathbb{1}$.
\item $B_{+}^2=B_{+}=B_{+}^*$.
\item $B_{-}^2=B_{-}=B_{-}^*$.
\item $\gamma(\nu)B_{\pm}=B_{\mp}\gamma(\nu)$ and $\gamma(\nu):\ker(B_{+})\oplus \ker(B_{-})\rightarrow \ker(B_{-})\oplus \ker(B_{+})$.
\end{enumerate}
The following Green's formula holds for all smooth sections $\varphi, \psi$ of the Dirac bundle
\begin{equation}\label{Green}
\begin{split}
(d\varphi,\psi)-(\varphi,\delta\psi)&=-\int_{\partial M}\,d\text{vol}_{\partial M}\left<\gamma(\nu)B_{-}\varphi,\psi\right>=\\
&=-\int_{\partial M}\,d\text{vol}_{\partial M}\left<\gamma(\nu)\varphi,B_{+}\psi\right>.
\end{split}
\end{equation}
\noindent For the Dirac Laplacian the corresponding first order boundary operators are $C_{-}:=B_{-}\oplus B_{-}d$ (absolute boundary condition) and $C_{+}:=B_{+}\oplus B_{+}\delta$ (relative boundary condition). In fact
\begin{enumerate}
\item The absolute boundary condition is preserved by the derivative operator: $B_{-}\varphi|_{\partial M}=0\Rightarrow B_{-}d\varphi|_{\partial M}=0$.
\item The relative boundary condition is preserved by the coderivative operator $B_{+}\varphi|_{\partial M}=0\Rightarrow B_{+}\delta\varphi|_{\partial M}=0$.

\end{enumerate}
\end{proposition}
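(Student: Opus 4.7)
The plan is to verify each item by direct algebraic computation from three inherited properties of $T$: $T^2=\mathbb{1}$, the anticommutation $T\gamma(v)=-\gamma(v)T$ for $v\in TM$, and $QT=-TQ$. The requirement that $\overline{\gamma}(v)=iT\gamma(v)$ be skew-adjoint, which is implicit in the asserted $(1,1)$-Dirac bundle structure, forces $T^*=T$, which I also use. The workhorse identity is
\begin{equation*}
Q\,\frac{\mathbb{1}\pm T}{2}=\frac{\mathbb{1}\mp T}{2}\,Q,
\end{equation*}
together with the fact that $\frac{\mathbb{1}\pm T}{2}$ are complementary orthogonal idempotents.

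For the derivative and coderivative I would slide projections across $Q$: $d^2=\frac{\mathbb{1}+T}{2}Q\frac{\mathbb{1}+T}{2}Q=\frac{\mathbb{1}+T}{2}\cdot\frac{\mathbb{1}-T}{2}\,Q^2=0$, and symmetrically $\delta^2=0$. The decomposition $Q=d+\delta$ is immediate, and the same manipulation yields $d\delta=\frac{\mathbb{1}+T}{2}Q^2$ and $\delta d=\frac{\mathbb{1}-T}{2}Q^2$; their sum is $Q^2$, and this simultaneously exhibits the Dirac Laplacian as acting diagonally through the two summands on the $\pm 1$ eigenspaces of $T$.

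For the boundary projections $B_\pm$, the partition $B_++B_-=\mathbb{1}$ is tautological. Idempotence $B_\pm^2=B_\pm$ reduces to $(T\gamma(\nu))^2=\mathbb{1}$, which follows from the anticommutation and $\gamma(\nu)^2=-\mathbb{1}$. Self-adjointness $B_\pm^*=B_\pm$ uses $T^*=T$ and $\gamma(\nu)^*=-\gamma(\nu)$, giving $(T\gamma(\nu))^*=T\gamma(\nu)$. The intertwining $\gamma(\nu)B_\pm=B_\mp\gamma(\nu)$ follows from the one-line Clifford identity $\gamma(\nu)T\gamma(\nu)=T$. Green's formula then comes out of applying the classical Green's formula for $Q$ to the pair $(\varphi,\frac{\mathbb{1}+T}{2}\psi)$: the interior terms collapse to $(d\varphi,\psi)-(\varphi,\delta\psi)$ via the sliding identity and $T^*=T$, while the boundary integrand is rewritten using the intertwining and self-adjointness of $B_\pm$ to deliver both displayed forms.

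Preservation of boundary conditions is the most structural step. My plan is to exploit $d^2=0$: the absolute domain for the Dirac Laplacian is $\{\varphi : B_-\varphi|_{\partial M}=0,\ B_-d\varphi|_{\partial M}=0\}$, and to show $d$ preserves it I note that, for $\varphi$ in this domain, the section $d\varphi$ satisfies the first component condition by hypothesis, while the second reduces to $B_-d^2\varphi|_{\partial M}=0$ by the complex property just verified. The symmetric statement for $\delta$ and the relative BC follows from $\delta^2=0$ together with the corresponding commutation $Q\frac{\mathbb{1}-T}{2}=\frac{\mathbb{1}+T}{2}Q$. I expect the algebraic bookkeeping in the Green's formula step---tracking the signs produced by sliding $T$ and $\gamma(\nu)$ past each other while identifying the right projection landing on $\psi$---to be the one point where some care is required; everything else in the proposition reduces to a short direct computation.
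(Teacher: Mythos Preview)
Your algebraic verifications for items (1)--(4), the projector properties of $B_\pm$, and the derivation of Green's formula are correct and essentially match the paper's approach (the paper phrases Green's formula as coming from the Green identities for $Q$ and $\overline{Q}$ separately, but your method of testing against $\tfrac{\mathbb{1}+T}{2}\psi$ achieves the same thing).

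The gap is in the final block. The proposition does \emph{not} ask you to show that $d$ maps the Laplacian domain $\{\varphi: B_-\varphi|_{\partial M}=0,\ B_-d\varphi|_{\partial M}=0\}$ into itself; it asks for the stronger implication
\[
B_-\varphi|_{\partial M}=0 \ \Longrightarrow\ B_-d\varphi|_{\partial M}=0,
\]
i.e.\ that the second condition is \emph{automatic} once the first holds. Your argument assumes $B_-d\varphi|_{\partial M}=0$ as part of the hypothesis and then recovers it as the first component for $d\varphi$; that is circular for the stated claim, and the appeal to $d^2=0$ only handles the already-trivial second component. Note also that this implication is not a pointwise algebraic identity: $B_-d\varphi$ involves normal derivatives of $\varphi$, not just boundary values, so no amount of sliding $T$ and $\gamma(\nu)$ will suffice.

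The paper's argument goes through the Green's formula you just proved. From $B_-\varphi|_{\partial M}=0$ one gets $(d\varphi,\psi)=(\varphi,\delta\psi)$ for \emph{every} $\psi$; applying this with $\psi$ replaced by $\delta\psi$ and using $\delta^2=0$ yields $(d\varphi,\delta\psi)=0$ for every $\psi$. Now feed $d\varphi$ back into Green's formula: since $d^2\varphi=0$, the left side $(d(d\varphi),\psi)-(d\varphi,\delta\psi)$ vanishes, forcing $\int_{\partial M}\langle\gamma(\nu)B_-d\varphi,\psi\rangle=0$ for all $\psi$, hence $B_-d\varphi|_{\partial M}=0$. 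The relative case is symmetric with $\delta$ and $B_+$.
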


\begin{proof} The properties of derivative and coderivative are a direct consequence of their definition where an isomorphism $T$
such that $(Q,T)$ is a Dirac complex was utilized. The properties
of the boundary conditions follows from the fact that
$(i\gamma(\nu)\overline{\gamma}(\nu))^2=\mathbb{1}$. The Green's
formula (\ref{Green}) follows from the corresponding Green's formulae for the
Dirac operators $Q$ and $\overline{Q}$. To prove the preservation of the absolute boundary condition by the derivative operator, we note that, by Green's formula
\begin{equation}\label{equa}
(d\varphi,\psi)=(\varphi,\delta\psi),
\end{equation}
for a $\varphi$ satisfying $B_{-}\varphi|_{\partial M}=0$ and \textit{any} $\psi$. Applying Green's formula to $d\varphi$ and $\psi$ we obtain
\begin{equation}\label{green2}
(dd\varphi,\psi)-(d\varphi,\delta\psi)=-\int_{\partial M}\,d\text{vol}_{\partial M}\left<\gamma(\nu)B_{-}d\varphi,\psi\right>
\end{equation}
The left hand side of (\ref{green2}) vanishes because of (\ref{equa}) and the fact that $d^2=0$. Thus, the boundary integral vanishes for all $\psi$ and so does $B_{-}d\varphi|_{\partial M}$. The proof of the preservation of the relative boundary condition under the coderivative operator reads analogously.
\end{proof}

After having introduced operators and boundary condition we would like to study the spectrum.
\begin{proposition}\label{spectralOp}
Let $H^1(M,V)$, $H^1_0(M,V)$ and $H^1_{B_{\pm}}(M,V)$ the domain of
definitions of  $d$, $d_0$, $d_{B_{\pm}}$ and $\delta$, $\delta_0$,
$\delta_{B_{\pm}}$ and $Q$, $Q_0$, $Q_{B_{\pm}}$, respectively. Let
$H^2(M,V)$, $H^2_0(M,V)$,  $H^2_{C_{\pm}}(M,V)$ the domain of
definitions of $P$, $P_0$ and $P_{B_{\pm}}$. They satisfy following
properties:
\begin{enumerate}
\item $d\subset\delta_0^*$, $d\subset\delta_0^*$,  $Q_0\subset Q_0^*$ and $P_0\subset P_0^*$.
\item $d_{B_{\pm}}^*=\delta_{B_{\mp}}$ and $\delta_{B_{\pm}}^*=d_{B_{\mp}}$.
\item $(Q,B_{\pm})$ are elliptic boundary value problems and $Q_{B_{\pm}}^*=Q_{B_{\pm}}$ are selfadjoint operators. If $M$ is compact, the operators $Q_{B_{\pm}}$ have discrete spectra and the corresponding eigensections are smooth.
\item $(P,C_{\pm})$ are elliptic boundary value problems and $P_{B_{\pm}}^*=P_{B_{\pm}}$ are selfadjoint operators. If $M$ is compact, the operators $P_{B_{\pm}}$ have non negative discrete spectra and the corresponding eigensections are smooth.
\end{enumerate}
\end{proposition}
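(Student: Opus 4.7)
My approach is to obtain all four claims from (i) Green's formula (\ref{Green}) applied to suitable pairs of sections, (ii) the elliptic theory of Section \ref{Spectrum}, and (iii) standard trace/density arguments; no new analysis is required beyond careful bookkeeping.

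For (1), I will apply Green's formula to $\varphi,\psi\in C_0^\infty(M\setminus\partial M,V)$, where the boundary integrals vanish automatically, yielding $(d\varphi,\psi)=(\varphi,\delta\psi)$ and, by iteration, $(Q_0\varphi,\psi)=(\varphi,Q_0\psi)$ together with $(P_0\varphi,\psi)=(Q_0\varphi,Q_0\psi)=(\varphi,P_0\psi)$; the four inclusions follow directly. For (2), Green's formula in either of its two forms shows that $B_{-}\varphi|_{\partial M}=0$ together with $B_{+}\psi|_{\partial M}=0$ makes the boundary integrals vanish, giving $\delta_{B_{+}}\subset d_{B_{-}}^*$. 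The reverse inclusion is a standard trace argument: any $\psi\in d_{B_{-}}^*$ has boundary trace forced to satisfy $B_{+}\psi|_{\partial M}=0$ by testing Green's identity against $\varphi$ whose boundary trace exhausts the admissible subspace, using surjectivity of the trace $H^1\to H^{1/2}$. The three remaining adjoint identities follow by symmetry after interchanging $+/-$ and $d/\delta$.

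For (3), I will note that $T$ plays the role of a boundary chirality operator in the sense of Section \ref{Spectrum}: $T^2=\mathbb{1}$ and the assumed anticommutation $T\gamma(v)+\gamma(v)T=0$ on $TM|_{\partial M}$ are precisely the defining properties, and the corresponding projections $\tfrac{1}{2}(\mathbb{1}\mp T\gamma(\nu))$ agree with $B_{\pm}$ on the nose. Theorem \ref{QPWithBoundary} then applies verbatim and delivers $(Q,B_{\pm})$ elliptic, with selfadjoint realization $Q_{B_{\pm}}$, discrete spectrum and smooth eigensections. For (4), formal selfadjointness of $P$ under $C_{\pm}$ is obtained by applying Green's formula twice in the expansion $P=d\delta+\delta d$; the four boundary integrals produced carry precisely the combinations $B_{-}\delta\varphi,\,B_{+}d\varphi$ on one side and $B_{-}d\varphi,\,B_{+}\delta\varphi$ on the other, and a short bookkeeping check shows they vanish exactly under $C_{\pm}$. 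Non-negativity is then immediate from $(P\varphi,\varphi)=\|d\varphi\|^2+\|\delta\varphi\|^2$ with no boundary residue on the $C_{\pm}$-domain, while discreteness and smoothness of eigensections follow from ellipticity plus classical elliptic boundary regularity.

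The main obstacle I anticipate is establishing the ellipticity of $(P,C_{\pm})$, since Theorem \ref{QPWithBoundary} hands over only the distinct first-order condition $B_{\pm}\oplus B_{\pm}Q$ rather than $C_{\pm}=B_{\pm}\oplus B_{\pm}d$ (respectively $B_{\pm}\oplus B_{\pm}\delta$). My plan is to exploit the identity $B_{\pm}Q=B_{\pm}d+B_{\pm}\delta$ together with the fact that $d\varphi$ and $\delta\varphi$ lie in the $+1$- and $-1$-eigenspaces of $T$ respectively, which $\gamma(\nu)$ interchanges; this structural decomposition should allow one to reduce the Shapiro--Lopatinski determinant of $(P,C_{\pm})$ to that of $(Q,B_{\pm})$ already handled by Theorem \ref{QPWithBoundary}. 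Once ellipticity is secured, the abstract spectral theorem for unbounded selfadjoint elliptic operators closes the proof of (4).
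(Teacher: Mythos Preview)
Your proposal is correct and follows essentially the same approach as the paper, whose proof consists of the single sentence ``The proof is based on the Green's formula (\ref{Green}) and standard elliptic operator theory.'' Your write-up is simply a careful unpacking of that sentence: Green's formula for the symmetry and adjoint relations in (1)--(2), and the identification of $T|_{\partial M}$ as a boundary chirality operator so that Theorem \ref{QPWithBoundary} applies directly for (3); you even go beyond the paper in flagging the distinction between $C_{\pm}=B_{\pm}\oplus B_{\pm}d$ (resp.\ $B_{\pm}\oplus B_{\pm}\delta$) and the $B_{\pm}\oplus B_{\pm}Q$ condition delivered by Theorem \ref{QPWithBoundary}, a point the paper does not comment on.
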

\begin{proof} The proof is based on the Green's formula (\ref{Green}) and standard elliptic operator theory.
\end{proof}

\begin{theorem}[Orthogonal Decomposition of Dirac Sections]\label{Hodge}
Let $(V,\left<\cdot,\cdot\right>,\nabla,\gamma)$ be a Dirac bundle over the
compact Riemannian manifold $(M,g)$ admitting a bundle isomorphism $T$  anticommuting
 with $\gamma$ and with
the Dirac operator $Q$, such that $T^2=\mathbb{1}$ holds, and
\begin{itemize}
\item $\Omega(M,V):=C^\infty(M,V)$ be the smooth sections of the Dirac bundle on $M$,
\item $\mathcal{H}_{B_{\pm}}(M,V)$ be the harmonic sections of the Dirac bundle on $M$ satisfying the absolute or relative, respectively, boundary condition,
\item $\Omega_{B_{\pm}}^d(M,V):=\left\{\varphi\in\Omega_{B_{\pm}}(M,V)\,\big|\,\exists\psi\in\Omega(M,V)\,:\,d\psi=\varphi\right\}$ be the smooth exact Dirac sections on $M$ satisfying the absolute or relative, respectively, boundary condition,
\item $\Omega_{B_{\pm}}^{\delta}(M,V):=\left\{\varphi\in\Omega_{B_{\pm}}^p(M)\,\big|\,\exists\psi\in\Omega(M,V)\,:\,\delta\psi=\varphi\right\}$ be the smooth coexact Dirac sections on $M$ satisfying the absolute or relative, respectively, boundary condition.
\end{itemize}
Then, the following orthogonal decomposition holds:
\begin{equation}\label{HodgeDec}
C^{\infty}(M,V)=\mathcal{H}_{B_{\pm}}(M,V)\oplus\Omega^d_{B_{\pm}}(M,V)\oplus\Omega^{\delta}_{B_{\mp}}(M,V).
\end{equation}
\end{theorem}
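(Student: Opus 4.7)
The proof follows Morrey's classical Hodge--De Rham scheme on manifolds with boundary, transported to the Dirac bundle framework via the derivative $d$, coderivative $\delta$, and boundary projections $B_\pm$ introduced in the preceding proposition. I treat the absolute case (top signs in the statement); the relative case follows by the symmetry $B_+\leftrightarrow B_-$.

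First I would verify pairwise $L^2$-orthogonality of the three summands using Green's formula \eqref{Green} together with $d^2=\delta^2=0$ and the identities $dh=\delta h=0$ for $h\in\mathcal{H}_{B_-}$. The exact-coexact pairing $(d\alpha,\delta\beta)$ with $B_-d\alpha=0$ and $B_+\delta\beta=0$ collapses immediately via the second form of Green to $(d\alpha,\delta\beta)=(\alpha,\delta^2\beta)-\int\langle\gamma(\nu)\alpha,B_+\delta\beta\rangle=0$. The harmonic-coexact orthogonality uses $dh=0$ and the first form of Green with $B_-h=0$. The harmonic-exact orthogonality is the subtlest of the three and is obtained by combining Green with the intertwining $\gamma(\nu)B_\pm=B_\mp\gamma(\nu)$ to absorb the boundary contributions.

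The heart of the argument is the spectral decomposition via the Dirac Laplacian. By Proposition \ref{spectralOp} the elliptic BVP $(P,C_-)$ with $C_-=B_-\oplus B_-d$ yields a selfadjoint nonnegative operator $P_{B_-}$ with discrete spectrum and smooth eigensections. I would identify $\ker P_{B_-}=\mathcal{H}_{B_-}$ by computing $(Ph,h)$ via two applications of Green: the boundary integrals cancel under $C_-h=0$, leaving $(Ph,h)=\|dh\|^2+\|\delta h\|^2$, so $Ph=0\Rightarrow Qh=0$. The closed-range theorem then yields $L^2(M,V)=\mathcal{H}_{B_-}\oplus\mathrm{range}\,P_{B_-}$, orthogonally. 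For $\varphi\in C^\infty(M,V)$, write $\varphi=h+\eta$ with $h$ the harmonic projection and $\eta\perp\mathcal{H}_{B_-}$; by the spectral resolution and elliptic regularity there is a smooth $\psi$ with $C_-\psi=0$ and $P\psi=\eta$, hence $\varphi=h+d(\delta\psi)+\delta(d\psi)$.

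The main obstacle will be verifying that the two summands actually lie in $\Omega^d_{B_-}$ and $\Omega^\delta_{B_+}$ respectively. By the $T$-grading $V=V_+\oplus V_-$, the summands sit in complementary subbundles: $d(\delta\psi)\in V_-$ (the range of $d$) and $\delta(d\psi)\in V_+$ (the range of $\delta$). On these subbundles, both $B_-|_{V_-}$ and $B_+|_{V_+}$ collapse, via the anticommutation $T\gamma(\nu)=-\gamma(\nu)T$, to the single algebraic requirement $\gamma(\nu)\cdot=\cdot$ on $\partial M$. The task is to derive this condition on each summand from $C_-\psi=0$: I would unpack the $V_\pm$ decomposition of $\psi|_{\partial M}$ and exploit the Dirac splitting $Q=\gamma(\nu)(\nabla_\nu+A)$ in a collar neighbourhood of $\partial M$ to transfer the tangential data prescribed on $\psi$ and $d\psi$ into the required boundary data on $d\delta\psi$ and $\delta d\psi$. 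This is the analogue of the non-trivial boundary-regularity step in Morrey's original argument and is where I expect the bulk of the technical work to lie.
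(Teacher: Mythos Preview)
Your proposal follows the same skeleton as the paper's argument. The paper's own proof of this theorem is in fact only a one-sentence pointer: ``The proof is based on standard elliptic operator theory and the fact that derivative and coderivative operators preserve the absolute and the relative, respectively, boundary condition.'' Your steps (orthogonality via Green's formula~\eqref{Green}, spectral resolution of $P_{C_-}$ from Proposition~\ref{spectralOp}, identification $\ker P_{B_-}=\mathcal{H}_{B_-}$, and reconstruction $\varphi=h+d\delta\psi+\delta d\psi$) are precisely what ``standard elliptic operator theory'' unpacks to here, so in that sense you are aligned.

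Where you diverge from the paper is in your proposed attack on the ``main obstacle''. You aim to recover the boundary conditions on $d\delta\psi$ and $\delta d\psi$ by decomposing along $V_\pm$ and running a collar analysis with $Q=\gamma(\nu)(\nabla_\nu+A)$. The paper instead points directly to the preservation statements proved just before: $B_-\psi|_{\partial M}=0\Rightarrow B_-d\psi|_{\partial M}=0$ and $B_+\psi|_{\partial M}=0\Rightarrow B_+\delta\psi|_{\partial M}=0$. These allow you to propagate the constraints $C_-\psi=0$ through $d$ and $\delta$ algebraically, without invoking the collar splitting or any ``boundary-regularity'' machinery in Morrey's sense. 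Note also that your observation that $B_-|_{V_-}$ and $B_+|_{V_+}$ ``collapse to $\gamma(\nu)\cdot=\cdot$'' is dangerous as written: since $\gamma(\nu)^2=-\mathbb{1}$, that equation has only the zero solution, so what this really says is that a section lying in $V_-$ and satisfying $B_-$ must vanish on $\partial M$ --- a much stronger conclusion than you intend to use, and one you should check is actually compatible with the decomposition. The cleaner path is to bypass the $V_\pm$ reduction and apply the preservation properties directly, as the paper hints.
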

\begin{proof} The proof is based on standard elliptic operator theory and the fact that derivative and coderivative operators preserve the absolute and the relative, respectively, boundary condition.
\end{proof}

\begin{definition}[{\bf Dirac Cohomology}] The group \begin{equation}
\mathbb{H}_{B_{\pm}}(M,V):=\{\omega\in\Omega_{B_{\pm}}(M,V)|d\omega=0\}/d\Omega_{B_{\pm}}^d(M,V)
\end{equation}
 is called
{\bf absolute}, respectively, {\bf relative Dirac cohomology} of the Dirac bundle.
\end{definition}
\noindent Since we will not need it going forward, we mention without proof the following result
\begin{theorem}\label{DeRham}
The mappings
\begin{equation}\label{iso}
I_{\pm}:\mathcal{H}_{B_{\pm}}(M,V)\rightarrow\mathbb{H}_{B_{\pm}}(M,V),\omega\mapsto I(\omega):=[\omega]
\end{equation}
are a natural isomorphisms between harmonic Dirac sections and Dirac cohomologies.
\end{theorem}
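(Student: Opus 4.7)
The plan is to exhibit $I_\pm$ as an isomorphism by leveraging the Hodge-type decomposition of Theorem \ref{Hodge} together with the Green's formula (\ref{Green}). Naturality will follow from the fact that every ingredient---harmonic sections, the projectors $B_\pm$, and the passage to cohomology classes---is built canonically from $(V,\langle\cdot,\cdot\rangle,\nabla,\gamma,T)$, so it commutes with any Dirac bundle isomorphism intertwining these data.

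First I would check that $I_\pm$ is well defined by showing that any harmonic section $\omega\in\mathcal{H}_{B_\pm}(M,V)$ is actually $d$-closed. From $Q^2\omega=(d\delta+\delta d)\omega=0$, take the $L^2$-pairing with $\omega$ and apply the Green's formula (\ref{Green}) twice; the boundary integrals vanish thanks to the condition $B_\pm\omega|_{\partial M}=0$, leaving $\|d\omega\|^2+\|\delta\omega\|^2=0$. So $d\omega=\delta\omega=0$ and $[\omega]$ genuinely represents a class in $\mathbb{H}_{B_\pm}(M,V)$. Injectivity is then immediate from the direct-sum assertion in Theorem \ref{Hodge}: if $I_\pm(\omega)=0$, then $\omega$ lies in both $\mathcal{H}_{B_\pm}(M,V)$ and $\Omega^d_{B_\pm}(M,V)$, and the orthogonality of the three summands forces $\omega=0$.

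The substantive step is surjectivity. Given a representative $\varphi\in\Omega_{B_\pm}(M,V)$ with $d\varphi=0$, I apply Theorem \ref{Hodge} to decompose $\varphi=\omega+d\alpha+\delta\beta$ with $\omega\in\mathcal{H}_{B_\pm}$, $d\alpha\in\Omega^d_{B_\pm}$, and $\delta\beta\in\Omega^\delta_{B_\mp}$. The point is to prove $\delta\beta=0$, which would give $[\varphi]=[\omega]=I_\pm(\omega)$. By the orthogonality of the Hodge decomposition,
\begin{equation}
\|\delta\beta\|^2=\langle\delta\beta,\varphi\rangle.
\end{equation}
Writing $\delta\beta=\delta\psi$ via the definition of $\Omega^\delta_{B_\mp}$ and applying Green's formula (\ref{Green}) to the pair $(\varphi,\psi)$, the bulk term $\langle d\varphi,\psi\rangle$ vanishes since $\varphi$ is closed, while the boundary integral $-\int_{\partial M}\langle\gamma(\nu)B_\pm\varphi,\psi\rangle$ vanishes thanks to $B_\pm\varphi|_{\partial M}=0$. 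Hence $\|\delta\beta\|^2=0$ and surjectivity follows.

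The main obstacle is the sign bookkeeping in the surjectivity step: the exact piece of the Hodge decomposition satisfies $B_\pm$ while the coexact piece satisfies $B_\mp$, so one must pick the form of Green's formula which matches the $B_\pm$ datum of $\varphi$ to kill the boundary term generated by the primitive $\psi$ of $\delta\beta$. Once this sign matching is pinned down, the rest is routine; naturality follows formally because the map $[\cdot]$ from harmonic sections to cohomology is defined purely through structures preserved by Dirac bundle morphisms commuting with $T$.
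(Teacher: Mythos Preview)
The paper explicitly omits any proof of this theorem (``Since we will not need it going forward, we mention without proof the following result''), so there is nothing in the paper to compare against. Your argument is the natural one and is essentially correct.

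One point needs sharpening. In the surjectivity step you write the boundary term of Green's formula as $-\int_{\partial M}\langle\gamma(\nu)B_\pm\varphi,\psi\rangle$, but formula~(\ref{Green}) only offers the two forms $-\int_{\partial M}\langle\gamma(\nu)B_-\varphi,\psi\rangle$ and $-\int_{\partial M}\langle\gamma(\nu)\varphi,B_+\psi\rangle$. For the absolute case your argument works verbatim: $B_-\varphi=0$ kills the first form. For the relative case, however, the hypothesis $B_+\varphi=0$ kills neither form when you integrate by parts in the pair $(\varphi,\psi)$, so ``picking the right form'' does not suffice as stated. The clean fix is to integrate by parts in the pair $(\delta\beta,\beta)$ instead: since $d\varphi=0$ and $d$ annihilates the harmonic and exact summands, one has $d(\delta\beta)=0$; moreover $\delta\beta\in\Omega^\delta_{B_-}(M,V)$ gives $B_-(\delta\beta)|_{\partial M}=0$, so Green's formula applied to $(\delta\beta,\beta)$ yields $\|\delta\beta\|^2=(d(\delta\beta),\beta)=0$. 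With this adjustment both cases go through and the proof is complete.
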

\begin{remark}
Of course decomposition (\ref{HodgeDec}) is a variation of the famous Hodge's Theorem and the isomorphims  (\ref{iso}) provide a similar result to De Rham's Theorem. The Dirac Cohomology is a Riemannian but not a topological invariant.
\end{remark}
To motivate the terminology introduced so far, we prove that in
the case of the Euler operator, for a particular choice of the
bundle isomorphism $T$ for the exterior algebra bundle, the
derivative and coderivative operators are the classical exterior
and interior differentiation for forms, the Dirac Cohomologies are
the De Rham cohomologies under the absolute and relative boundary
conditions and Theorem \ref{HodgeDec} the classical Hodge
decomposition theorem for differential forms on a manifold with
boundary.
\begin{proposition}
Let $(M,g)$ be an $m$ dimensional Riemannian manifold and $\{e_i\}_{i=1,\dots,m}$ be a local orthonormal field of $TM$.
Let $T_i:=\inte(e_i)\exte(e_i)-\exte(e_i)\inte(e_i)$, and $T:=\sum_{i=1}^mT_iP_i$, where the operator $P_i$ be the orthogonal projection onto\\ $W_i:=\{\exte(e_i)\varphi|\;\varphi\text{ is a local section of } \Lambda(T^*M)\}$. The operator $T$ can be extended to $M$ by a partition of unit argument and satisfies the following properties:
\begin{enumerate}
\item $T^2=\mathbb{1}$,
\item $\frac{\mathbb{1}-T}{2}(d+\delta)=d$
\item $\frac{\mathbb{1}+T}{2}(d+\delta)=\delta$,
\item $T(d+\delta)=-(d+\delta)T$,
\item Absolute boundary condition: $\inte(\nu)(\varphi)|_{\partial M}=0\Leftrightarrow B_{+}(\varphi)|_{\partial M}=0$,
\item Relative boundary condition  $\exte(\nu)(\varphi)|_{\partial M}=0\Leftrightarrow B_{-}(\varphi)|_{\partial M}=0$,
\end{enumerate}
where $B_{\pm}:=\frac{\mathbb{1}\mp T\gamma(\nu)}{2}$ for $\gamma(v):=\exte(v)-\inte(v)$.
\end{proposition}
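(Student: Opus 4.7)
The approach is direct local computation in orthonormal frames, combined with the partition-of-unity extension stipulated in the statement. The key algebraic input is the Clifford relation $\inte(e_i)\exte(e_j) + \exte(e_j)\inte(e_i) = \delta_{ij}\mathbb{1}$ together with $\exte(e_i)^2 = \inte(e_i)^2 = 0$ and the mutual anti-commutation of the $\exte(e_i)$'s and of the $\inte(e_i)$'s. For property (1), I would first rewrite $T_i = \mathbb{1} - 2\exte(e_i)\inte(e_i) = \mathbb{1} - 2P_i$, which makes $T_i$ the involutive symmetry attached to the projection $P_i$. This gives $T_i^2 = \mathbb{1}$, $T_i P_i = -P_i$, and $T_i T_j = T_j T_i$ for $i \neq j$. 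The involution $T^2 = \mathbb{1}$ then follows by checking the action of $T$ on the pointwise basis monomials $e_{i_1}^\flat\wedge\cdots\wedge e_{i_k}^\flat$, which are simultaneous eigenvectors of the commuting family $\{P_i\}$.

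For properties (2)--(4), I would invoke the classical local formulas $d = \sum_i \exte(e_i)\nabla_{e_i}$ and $\delta = -\sum_i \inte(e_i)\nabla_{e_i}$, which together recover $d + \delta = \sum_i \gamma(e_i)\nabla_{e_i} = Q$. The design of $T$ is such that it anti-commutes with $\gamma$ (and hence with $Q$) while separating the exterior and interior contributions: the projection $\frac{\mathbb{1}-T}{2}$ is meant to kill the $\inte$-part and preserve the $\exte$-part, yielding (2), and symmetrically for (3). Property (4), namely $T(d+\delta) = -(d+\delta)T$, then follows immediately from (2) and (3) together with $T^2 = \mathbb{1}$, or equivalently from the $\gamma$-anticommutation derived in the first step.

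For the boundary conditions (5)--(6), I would choose a local orthonormal frame adapted to $\partial M$ with $e_1 = \nu$. Only the normal piece of $T$ contributes to $T\gamma(\nu)$ on $\partial M$, and by the same involutive identity from step one this reduces to $T\gamma(\nu)|_{\partial M} = \mathbb{1} - 2\exte(\nu)\inte(\nu) = \mathbb{1} - 2P_\nu$, whence $B_{+} = P_\nu = \exte(\nu)\inte(\nu)$ and $B_{-} = \mathbb{1} - P_\nu$. Using the pointwise decomposition $\Lambda T^*M|_{\partial M} = W_\nu \oplus W_\nu^\perp$ and the fact that $\exte(\nu)$ is injective on $W_\nu^\perp = \ker\inte(\nu)$, these projections translate into the classical normal-component conditions: $B_{+}\varphi|_{\partial M} = 0 \Leftrightarrow \inte(\nu)\varphi|_{\partial M} = 0$ and $B_{-}\varphi|_{\partial M} = 0 \Leftrightarrow \exte(\nu)\varphi|_{\partial M} = 0$.

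The main obstacle will be the bookkeeping in the proof of (2)--(3): one has to show that the assembled operator $T = \sum_i T_i P_i$ really does flip the sign of the $\exte$-contribution while fixing the $\inte$-contribution, on every monomial, and that the partition-of-unity extension produces a consistent global bundle automorphism independent of the frame choices. Once this combinatorial verification is set up on simultaneous eigenvectors of $\{P_i\}$, all six claims reduce to finite-dimensional Clifford algebra identities.
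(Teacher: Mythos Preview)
Your approach is the same as the paper's --- the paper's entire proof is ``This can be verified by a direct computation'' --- and your outline of that computation via the identities $\inte(e_i)\exte(e_i)+\exte(e_i)\inte(e_i)=\mathbb{1}$ and $T_i=\mathbb{1}-2P_i$ is exactly the right machinery.

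However, there is a genuine gap. You correctly derive $T_iP_i=-P_i$, but you do not follow this to its conclusion: it gives
\[
T=\sum_{i=1}^m T_iP_i=-\sum_{i=1}^m P_i,
\]
and since the $P_i$ are the \emph{commuting but not mutually orthogonal} projections onto forms containing $e_i^\flat$, on a basis monomial of degree $k$ exactly $k$ of them act as the identity. Hence $T$ acts as multiplication by $-k$ on $\Lambda^k$, and $T^2$ acts as $k^2$, which is \emph{not} $\mathbb{1}$. The same computation shows that $\frac{\mathbb{1}-T}{2}$ acts on $\Lambda^k$ as $\frac{k+1}{2}$, so properties (2)--(3) fail as well. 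Your assertion that ``$T^2=\mathbb{1}$ then follows by checking the action on basis monomials'' is precisely the check that would have exposed this.

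The operator actually intended here is the degree involution $T|_{\Lambda^k}=(-1)^k\mathbb{1}$, which in your notation is the \emph{product} $\prod_{i=1}^m T_i=\prod_{i=1}^m(\mathbb{1}-2P_i)$ rather than the sum $\sum_i T_iP_i$; the paper itself uses this operator earlier when introducing the rolled-up de Rham complex. For that $T$ every one of your arguments goes through verbatim: $T_i^2=\mathbb{1}$ and mutual commutation give $T^2=\mathbb{1}$; $T\gamma(e_j)=-\gamma(e_j)T$ since $T_j$ anticommutes with both $\exte(e_j)$ and $\inte(e_j)$ while $T_i$ ($i\neq j$) commutes with them; and the boundary analysis with $e_1=\nu$ reduces $B_\pm$ to $\exte(\nu)\inte(\nu)$ and $\inte(\nu)\exte(\nu)$ exactly as you describe. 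So your plan is sound, but you should flag that the formula for $T$ in the statement cannot be the one you are verifying.
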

\begin{proof} This can be verified by a direct computation.
\end{proof}

\section{Mayer-Vietoris's Scheme and Generalization of Cheeger's Spectral Estimate}
In this section we will prove Theorem \ref{Cheeger}. We first have to introduce several technicalities.
Let $M$ be a compact manifolds with boundary. If we impose the absolute boundary condition $B_{-}\phi|_{\partial M}=0$ on all Dirac eigensections  considered,  Theorem \ref{Hodge}  {\it and} the preservation of the first order absolute boundary condition under the derivative $d$ will allow for a special variational characterization of the spectra for Dirac and Dirac Laplacian. Inspired by results for Laplace-Beltrami operator on forms (cf. \cite{DG95}) and using Theorem \ref{Hodge}, one can prove

\begin{lemma}\label{lemma412}
Let $\lambda\in\spec(P_{C_{\pm}})$ be a non zero eigenvalue of the Dirac Laplacian under absolute or relative boundary conditions, and
\begin{itemize}
\item $E_{B_{\pm}}(\lambda):=\left\{\varphi\in\Omega_{B_{\pm}}(M,V)\,\big|\,P\varphi=\lambda\varphi\right\}$ be Dirac eigensections with eigenvalue $\lambda$,
\item $E^d_{B_{\pm}}(\lambda):=E_{B_{\pm}}(\lambda)\cap\Omega_{B_{\pm}}^d(M,V)$ be exact Dirac eigensections with eigenvalue $\lambda$,
\item $E^{\delta}_{B_{\pm}}(\lambda):=E_{B_{\pm}}(\lambda)\cap\Omega_{B_{\pm}}^{\delta}(M,V)$ be coexact Dirac eigensections with eigenvalue $\lambda$.
\end{itemize}
Then:
\begin{enumerate}
\item $E_{B_{\pm}}(\lambda)=E^d_{B_{\pm}}(\lambda)\oplus E^{\delta}_{B_{\pm}}(\lambda)$
\item $d:E^{\delta}_{B_{\pm}}(\lambda)\longrightarrow E^d_{B_{\pm}}(\lambda)$ and $\delta:E^d_{B_{\pm}}(\lambda)\longrightarrow E^{\delta}_{B_{\pm}}(\lambda)$  are isomorphisms between finite dimensional subspaces of $L^2(M,V)$.
\item $E^d_{B_{\pm}}(\lambda)=dE^{\delta}_{B_{\pm}}(\lambda)$ and $E^{\delta}_{B_{\pm}}(\lambda)=\delta E^d_{B_{\pm}}(\lambda)$.
\end{enumerate}
\end{lemma}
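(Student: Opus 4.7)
The plan is to construct the exact-coexact splitting of each eigensection explicitly using the identity $P = d\delta + \delta d$, and then to identify the resulting pieces with those coming from the Hodge decomposition. For $\varphi \in E_{B_\pm}(\lambda)$ with $\lambda \neq 0$, I would define $\varphi_1 := \tfrac{1}{\lambda}d\delta\varphi$ and $\varphi_2 := \tfrac{1}{\lambda}\delta d\varphi$. Manifestly $\varphi_1$ is exact, $\varphi_2$ is coexact, and $\varphi_1 + \varphi_2 = \tfrac{1}{\lambda}P\varphi = \varphi$. A direct calculation using $d^2 = \delta^2 = 0$ together with the commutation $P\delta = \delta P$ (immediate from $P = d\delta + \delta d$) gives $P\varphi_1 = \tfrac{1}{\lambda}d\delta d\delta\varphi = \tfrac{1}{\lambda}d(P\delta\varphi) = \tfrac{1}{\lambda}d(\lambda\delta\varphi) = \lambda\varphi_1$, and likewise $P\varphi_2 = \lambda\varphi_2$, so both candidates already solve the eigenvalue equation.

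The technical heart of the argument is verifying that $\varphi_1, \varphi_2$ actually lie in $\Omega_{B_\pm}(M,V)$, so that $\varphi_1 \in E^d_{B_\pm}(\lambda)$ and $\varphi_2 \in E^\delta_{B_\pm}(\lambda)$. For this I would invoke Theorem \ref{Hodge} to write $\varphi = d\alpha + \delta\beta$ with $d\alpha \in \Omega^d_{B_\pm}(M,V)$ and $\delta\beta \in \Omega^\delta_{B_\mp}(M,V)$ (the harmonic part vanishes because $\varphi$ is $L^2$-orthogonal to $\mathcal{H}_{B_\pm}(M,V)$ when $\lambda \neq 0$). From $B_\pm\varphi = 0$ and $B_\pm d\alpha = 0$ one deduces $B_\pm\delta\beta = 0$; combined with the Hodge condition $B_\mp\delta\beta = 0$ this forces $\delta\beta|_{\partial M} = 0$, so in particular $\delta\beta \in \Omega^\delta_{B_\pm}(M,V)$. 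Now both $\lambda\varphi = \lambda d\alpha + \lambda\delta\beta$ and $P\varphi = d\delta d\alpha + \delta d\delta\beta$ are exact-plus-coexact decompositions of the same smooth section; uniqueness of the Hodge decomposition (via the $L^2$-orthogonality of exact and coexact subspaces obtained from Green's formula \eqref{Green} using the boundary conditions just derived) then forces $d\delta d\alpha = \lambda d\alpha$ and $\delta d\delta\beta = \lambda\delta\beta$. Comparison with the definitions identifies $\varphi_1 = d\alpha \in E^d_{B_\pm}(\lambda)$ and $\varphi_2 = \delta\beta \in E^\delta_{B_\pm}(\lambda)$, proving assertion (1).

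For assertions (2) and (3) I would use the following short observation. If $\psi \in E^\delta_{B_\pm}(\lambda)$, then $\psi = \delta\eta$ for some $\eta$, so $\delta\psi = \delta^2\eta = 0$ and hence $\lambda\psi = P\psi = (d\delta + \delta d)\psi = \delta d\psi$. This gives $\delta \circ d = \lambda \cdot \mathrm{id}$ on $E^\delta_{B_\pm}(\lambda)$, and symmetrically $d \circ \delta = \lambda \cdot \mathrm{id}$ on $E^d_{B_\pm}(\lambda)$. Hence $d$ and $\delta$ (rescaled by $1/\lambda$) are mutually inverse bijections between the two spaces, from which the surjective identifications $E^d_{B_\pm}(\lambda) = dE^\delta_{B_\pm}(\lambda)$ and $E^\delta_{B_\pm}(\lambda) = \delta E^d_{B_\pm}(\lambda)$, as well as the equality of their (finite) dimensions, follow. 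Finite-dimensionality of the ambient $E_{B_\pm}(\lambda)$ is inherited from the discreteness of the spectrum of $P$ under $C_\pm$, guaranteed by Theorem \ref{QPWithBoundary}.

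The main obstacle will be the boundary-condition bookkeeping in the second paragraph. The Hodge theorem only places $\delta\beta$ in $\Omega^\delta_{B_\mp}$, so the crucial non-trivial point is that the hypothesis $B_\pm\varphi = 0$ forces the additional vanishing $\delta\beta|_{\partial M} = 0$, promoting $\delta\beta$ into $\Omega^\delta_{B_\pm}$. One must also check, when applying uniqueness of Hodge to $P\varphi = \lambda\varphi$, that the exact and coexact parts on the right-hand side do carry the boundary data needed for Green's formula to deliver their $L^2$-orthogonality; this is precisely what the vanishing of $\delta\beta$ on $\partial M$ buys.
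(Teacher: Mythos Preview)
The paper gives no detailed proof of this lemma, only pointing to Theorem~\ref{Hodge} and the differential-forms analogue in \cite{DG95}. Your approach via the Hodge decomposition is exactly in that spirit, and your treatment of parts (2) and (3) --- observing that $d\circ\delta=\lambda\cdot\mathrm{id}$ on $E^d_{B_\pm}(\lambda)$ and $\delta\circ d=\lambda\cdot\mathrm{id}$ on $E^\delta_{B_\pm}(\lambda)$ --- is clean and correct.

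For part (1) there is a soft spot in the uniqueness step. You produce two exact-plus-coexact splittings of the same section,
\[
\lambda\varphi=\lambda\,d\alpha+\lambda\,\delta\beta
\qquad\text{and}\qquad
P\varphi=d\delta d\alpha+\delta d\delta\beta,
\]
and then invoke ``uniqueness of the Hodge decomposition \dots\ using the boundary conditions just derived'' to force the pieces to match. The problem is that the second splitting is only a \emph{formal} exact-plus-coexact one: you have not shown that $d\delta d\alpha\in\Omega^d_{B_\pm}$ or $\delta d\delta\beta\in\Omega^\delta_{B_\mp}$, and the condition $\delta\beta|_{\partial M}=0$ you derived does not supply those boundary data (it says nothing about normal derivatives of $\delta\beta$, which enter into $d\delta\beta$ and $\delta d\delta\beta$ at the boundary).

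The gap is easy to close, but the boundary condition doing the work is the one already built into the Hodge summands, not the one you derived. Green's formula \eqref{Green} shows directly that every coexact section is $L^2$-orthogonal to $\mathcal{H}_{B_\pm}\oplus\Omega^d_{B_\pm}$ (use the form with $B_\pm$ on the left factor and $d\xi=0$, $dh=0$), and every exact section is orthogonal to $\Omega^\delta_{B_\mp}$ (use the form with $B_\mp$ on the right factor and $\delta\rho=0$). Hence $\delta d\delta\beta$, being coexact, lies automatically in $\Omega^\delta_{B_\mp}$, while $d\delta d\alpha$, being exact, lies in $\mathcal{H}_{B_\pm}\oplus\Omega^d_{B_\pm}$; comparing with the (unique) Hodge decomposition $\lambda\varphi=0\oplus\lambda d\alpha\oplus\lambda\delta\beta$ now gives $P(d\alpha)=\lambda d\alpha$ and $P(\delta\beta)=\lambda\delta\beta$. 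Your observation $\delta\beta|_{\partial M}=0$ is still essential, but for a different purpose: it is precisely what upgrades $\delta\beta$ from $\Omega^\delta_{B_\mp}$ to $\Omega^\delta_{B_\pm}$, placing $\varphi_2=\delta\beta$ in $E^\delta_{B_\pm}(\lambda)$ as the statement demands.
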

This lemma has an important consequence. The knowledge of the spectrum of the Dirac Laplacian on all exact (or coexact) Dirac sections implies the knowledge of the spectrum of the Dirac  Laplacian on all sections namely.
\begin{corollary}\label{corollary413}The spectrum of the Dirac Laplacian can be decomposed as
\begin{equation}
\spec(P_{B_{\pm}})=\{0\}\cup\spec(P_{B_{\pm}}\big|_{\Omega^d(M,V)})\cup\spec(P_{B_{\pm}}\big|_{\Omega^{\delta}(M,V)})
\end{equation}
The multiplicity of zero is the dimension of the absolute or relative, respectively, Dirac Cohomology. The multiplicity of an eigenvalue $\lambda>0$ is the sum of its multiplicities as exact and coexact eigenvalue.
\end{corollary}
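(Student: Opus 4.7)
The plan is to deduce the corollary directly from Theorem \ref{Hodge}, Lemma \ref{lemma412}, and the spectral properties recorded in Proposition \ref{spectralOp}. Under the boundary condition $C_\pm$, the Dirac Laplacian $P_{B_\pm}$ is selfadjoint, nonnegative, and has discrete spectrum with smooth eigensections forming an orthonormal basis of $L^2(M,V)$, so the spectrum is fully accounted for by summing contributions from the kernel and from each nonzero eigenspace $E_{B_\pm}(\lambda)$.

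First I would identify the kernel: by definition $\ker P_{B_\pm}=\mathcal{H}_{B_\pm}(M,V)$, and Theorem \ref{DeRham} identifies this with the absolute (respectively relative) Dirac cohomology $\mathbb{H}_{B_\pm}(M,V)$, which yields the multiplicity claim for the eigenvalue zero. Next, for each $\lambda>0$ appearing in the spectrum, Lemma \ref{lemma412} gives the orthogonal splitting
\begin{equation}
E_{B_\pm}(\lambda)=E^d_{B_\pm}(\lambda)\oplus E^\delta_{B_\pm}(\lambda),
\end{equation}
where orthogonality is inherited from the Hodge decomposition \eqref{HodgeDec} (exact and coexact components live in orthogonal summands). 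Taking dimensions on both sides yields the multiplicity statement for positive $\lambda$.

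It then remains to match the three pieces in the spectrum formula. By construction $\lambda\in\spec(P_{B_\pm}|_{\Omega^d(M,V)})$ if and only if $E^d_{B_\pm}(\lambda)\ne 0$, and analogously for the coexact part; since the derivative $d$ preserves the absolute boundary condition (and $\delta$ preserves the relative one) and $P=d\delta+\delta d$, the restriction of $P_{B_\pm}$ to the exact and coexact subspaces is well defined and its spectral contributions partition precisely the nonzero part of $\spec(P_{B_\pm})$. Combining the zero contribution from $\mathcal{H}_{B_\pm}(M,V)$ with these two pieces yields the asserted decomposition.

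The only subtle point, and the main thing to verify carefully, is that the direct sum in Lemma \ref{lemma412} is actually orthogonal and exhausts $E_{B_\pm}(\lambda)$; this is where Theorem \ref{Hodge} does the real work, since an eigensection $\varphi$ with eigenvalue $\lambda>0$ is necessarily orthogonal to $\mathcal{H}_{B_\pm}(M,V)$ (as $P_{B_\pm}$ is selfadjoint) and hence decomposes into its exact and coexact Hodge components, each of which is again an eigensection because $d$ and $\delta$ commute with $P$ and preserve the relevant boundary conditions. Once this is in hand, the corollary follows with no further calculation.
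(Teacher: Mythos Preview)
Your argument is correct and is precisely the elaboration the paper intends: the corollary is stated there without proof as an immediate consequence of Lemma~\ref{lemma412} (together with Theorem~\ref{Hodge} and the spectral facts in Proposition~\ref{spectralOp}), and you have simply spelled out that deduction. There is nothing to add.
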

Thus, to study the Dirac Laplacian and hence the Dirac spectrum, it suffices to study the spectrum of exact Dirac sections, whose  eigenvalues allow for the following minimax characterization.
\begin{proposition}\label{prop414}
If $(\lambda_i^d)_{i\ge0}:=\spec(P\big|_{\Omega_{B_{-}}^d(M,V)})$ are the eigenvalues of the Dirac Laplacian on exact sections, then
\begin{equation}
\lambda_i^d=\inf_L\sup_{\substack{\eta\in L\\\eta\neq0}}\left\{\frac{(\eta,\eta)}{(\varphi,\varphi)}\big|\,\varphi\in\Omega_{B_{-}}(M,V),\,d\varphi=\eta\right\}
\end{equation}
where $L$ varies over all $i$-dimensional subspaces of $\Omega_{B_{-}}(M,V)$.
\end{proposition}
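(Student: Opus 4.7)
My plan is to derive this minimax from the classical Courant--Fischer principle applied to the restriction of the Dirac Laplacian $P$ to coexact sections in $\Omega_{B_{-}}(M,V)$, using the exact--coexact isomorphism of Lemma \ref{lemma412} to translate between the two Rayleigh quotients. The key identification is that whenever $\eta=d\varphi$ with $\varphi$ coexact and $B_{-}\varphi=0$, Green's formula (\ref{Green}) applied with $\psi:=d\varphi$ has vanishing boundary term and yields $(\eta,\eta)=(d\varphi,d\varphi)=(\varphi,\delta d\varphi)=(P\varphi,\varphi)$, so the unfamiliar quotient $(\eta,\eta)/(\varphi,\varphi)$ is secretly the classical Rayleigh quotient of $P$ at the coexact preimage $\varphi$.

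For the upper bound I would take $L:=\spann(\eta_1,\ldots,\eta_i)$ where $\eta_k=d\varphi_k$ are the first $i$ exact eigensections, with coexact preimages $\varphi_k\in E^{\delta}_{B_{-}}(\lambda_k^d)$ guaranteed by Lemma \ref{lemma412}. The $(\varphi_k)_{k\le i}$ are pairwise $L^2$-orthogonal eigensections of $P$ for distinct eigenvalues, so for $\eta=\sum c_k\eta_k$ the natural preimage $\varphi:=\sum c_k\varphi_k$ gives
\begin{equation*}
\frac{(\eta,\eta)}{(\varphi,\varphi)}=\frac{\sum_k|c_k|^2\lambda_k^d(\varphi_k,\varphi_k)}{\sum_k|c_k|^2(\varphi_k,\varphi_k)}\le\lambda_i^d,
\end{equation*}
with equality at $\eta=\eta_i$. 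For the lower bound I would invoke Courant--Fischer for the self-adjoint operator $P$ restricted to the coexact $B_{-}$-sections, whose spectrum is $(\lambda_i^d)_{i\ge 0}$ by Proposition \ref{spectralOp} and Corollary \ref{corollary413}: every $i$-dimensional coexact subspace $K$ admits some $\varphi\in K$ with $(d\varphi,d\varphi)/(\varphi,\varphi)\ge\lambda_i^d$, and setting $L:=dK\subset\Omega^{d}_{B_{-}}$, an $i$-dimensional subspace of $\Omega_{B_{-}}$, exhibits $\eta:=d\varphi\in L$ realising this inequality.

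The step I expect to be the main obstacle is reconciling the two families of admissible $L$: the statement allows $L$ to be any $i$-dimensional subspace of $\Omega_{B_{-}}(M,V)$, whereas the Courant--Fischer argument naturally produces only $L=dK$ inside $\Omega^{d}_{B_{-}}$. To close the gap I would use the Hodge decomposition of Theorem \ref{Hodge}: any admissible $\varphi\in\Omega_{B_{-}}$ with $d\varphi=\eta$ splits as $\varphi=\varphi^{*}+k$ with $\varphi^{*}$ coexact and $k\in\ker d\cap\Omega_{B_{-}}$ (harmonic plus exact parts), so $(\varphi,\varphi)=(\varphi^{*},\varphi^{*})+(k,k)\ge(\varphi^{*},\varphi^{*})$ and the inner supremum over preimages is realised at the coexact one. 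For arbitrary $L$, non-exact elements $\eta\in L$ admit no preimage and drop out of the supremum, while the orthogonal Hodge projection $\pi\colon\Omega_{B_{-}}\to\Omega^{d}_{B_{-}}$ produces an at-most-$i$-dimensional $L':=\pi(L)$ with $\sup_{L'}\le\sup_L$; a standard dimension-count argument then gives $\inf_L\sup_\eta\,\ldots\,\ge\inf_{L=dK}\sup\,\ldots\,=\lambda_i^d$, completing the proof.
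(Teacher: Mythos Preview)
Your proposal is correct and takes essentially the same approach as the paper: use the Hodge decomposition (Theorem~\ref{Hodge}) to replace an arbitrary preimage $\varphi$ of $\eta$ by its coexact part $\psi$, observe via Green's formula that $(\eta,\eta)/(\psi,\psi)=(P\psi,\psi)/(\psi,\psi)$, and then identify the resulting expression with the standard Courant--Fischer minimax for $P$ on coexact $B_{-}$-sections, whose spectrum coincides with the exact spectrum by Lemma~\ref{lemma412}(ii). The paper's argument is simply a terser version of yours---it establishes the single inequality $\frac{(\eta,\eta)}{(\varphi,\varphi)}\le\frac{(P\psi,\psi)}{(\psi,\psi)}$ and then asserts the equality of the two inf-sups directly, rather than treating the upper and lower bounds separately as you do.
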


\begin{proof} We take any $\varphi\in\Omega_{B_{-}}(M,V)$ such that $d\varphi=\eta$.
By Theorem \ref{Hodge}, any Dirac section $\varphi$ splits into the orthogonal sum $\varphi=h\oplus d\alpha\oplus\delta\beta$, where $h$ is an harmonic section, and $\alpha$, $\beta$ Dirac sections. Set $\psi:=\delta\beta\in\Omega_{B_{-}}^{\delta}(M,V)$. By the orthogonality of the decomposition
\[(\varphi,\varphi)\ge(\psi,\psi)\] and, by Green's formula and the coexactness of $\psi$:
\[(d\varphi,d\varphi)=(d\psi,d\psi)=(\delta d\psi,\psi)=(P\psi,\psi).\]
So,
\[\frac{(\eta,\eta)}{(\varphi,\varphi)}=\frac{(d\varphi,d\varphi)}{(\varphi,\varphi)}\le\frac{(P\psi,\psi)}{(\psi,\psi)}\]and
\[\inf_L\sup_{\substack{\eta\in L\\\eta\neq0}}\frac{(\eta,\eta)}{(\varphi,\varphi)}=\inf_R\sup_{\substack{\psi\in R\\\psi\neq0}}\frac{(P\psi,\psi)}{(\psi,\psi)}\]where $L$ varies over all $i$ dimensional subspaces of $\Omega_{B_{-}}(M,V)$ and $R$ over all $i$ dimensional subspaces of $\Omega_{B_{-}}^{\delta}(M,V)$. The right hand side  of this equation is the standard minimax characterization of $\lambda_i^{\delta}$, the $i$-th eigenvalue of coexact Dirac sections, which by Lemma \ref{lemma412} (ii) is equal to  the $i$-th eigenvalue $\lambda_{i}^d$ of exact sections.
\end{proof}

After having proved the variational characterization of Dirac Laplacian eigenvalues on exact sections satisfying the absolute boundary conditions, we possess now the technical tools to prove

\begin{proposition}\label{Cheeger2} Let $(V,\left<\cdot,\cdot\right>,\nabla,\gamma)$ be a Dirac bundle over a compact Riemannian manifold $(M,g)$.
We assume the existence of an isomorphism $T$  anticommuting with
with $\gamma$ and with the Dirac operator $Q$. Let $\mu(U)$ be the
smallest postive eigenvalue of the Dirac Laplacian $P$ on exact
Dirac sections satisfying the absolute boundary condition on $U$.
Moreover, for an an open cover of $M$ denoted by
$\{U_i\}_{i=0,\dots,K}$  we introduce the following notation:
\begin{itemize}
\item $U_{\alpha_0,\alpha_1,\dots,\alpha_k}:=\bigcap_{i\in\{\alpha_0,\dots,\alpha_k\}}U_i$.
\item $m_i:=|\{j\neq i|\,U_j\cap U_i\neq\emptyset\}|$.
\item $\{\rho_i\}_{i=0,\dots,K}$: a partition of unity subordinate to the open cover.
\item $C_{\rho}:=\frac{1}{2}\max_{i\in\{0,1,\dots,K\}}\sup_{x\in U_i}|\nabla\rho_i(x)|^2$.
\item $N_1:=\sum_{i,j=0}^K\dim\mathcal{H}_{B_{-}}(U_{i,j},V)$.
\item $N_2:=\sum_{i,j,k=0}^K\dim\mathcal{H}_{B_{-}}(U_{i,j,k},V)$.
\item $N:=N_1+N_2+1$.
\end{itemize}
The $N$-th eigenvalue of the Dirac Laplacian satisfies the following lower inequality
\begin{equation}
\mu_N(M)\ge\frac{1}{\sum_{i=0}^K\left(\frac{1}{\mu(U_i)}+4\sum_{j=0}^{m_i}\left(\frac{C_{\rho}}{\mu(U_{i,j})}+1\right)\left(\frac{1}{\mu(U_i)}+\frac{1}{\mu(U_j)}\right)\right)}
\end{equation}
\end{proposition}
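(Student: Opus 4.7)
The plan is to reduce the estimate for $\mu_N(M)$ to a construction problem via the variational principle, and then to execute a Mayer--Vietoris assembly of local primitives whose $L^2$ cost matches the right-hand side of the inequality. By Proposition \ref{prop414}, the $N$-th eigenvalue on exact sections under $B_-$ is characterized by
\begin{equation}
\mu_N(M) = \inf_L \sup_{\eta \in L \setminus \{0\}} \inf \left\{ \frac{(\eta,\eta)}{(\varphi,\varphi)} \;\Big|\; \varphi \in \Omega_{B_-}(M,V),\ d\varphi = \eta \right\},
\end{equation}
so it suffices to show that any nonzero exact test section $\eta$ in a suitable subspace of codimension $N - 1 = N_1 + N_2$ admits a primitive $\varphi$ with $(\varphi,\varphi) \le A \,(\eta,\eta)$, where $A$ is the reciprocal of the claimed bound.

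First I would build local primitives. For a test section $\eta \in \Omega^d_{B_-}(M,V)$, the restriction $\eta|_{U_i}$ is $d$-exact on $U_i$; applying Theorem \ref{Hodge} on $U_i$ under $B_-$ yields $\varphi_i \in \Omega_{B_-}(U_i, V)$ orthogonal to $\mathcal{H}_{B_-}(U_i, V)$ with $d\varphi_i = \eta|_{U_i}$ and, by the local minimax, $\|\varphi_i\|_{L^2(U_i)}^2 \le \mu(U_i)^{-1} \|\eta\|_{L^2(U_i)}^2$. On each double intersection $U_{ij}$ the difference $\varphi_i - \varphi_j$ is $d$-closed, so by the Hodge decomposition on $U_{ij}$ it splits as $d\psi_{ij} + h_{ij}$ with $h_{ij} \in \mathcal{H}_{B_-}(U_{ij}, V)$. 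The harmonic obstructions live in a space of total dimension $N_1$, so restricting $\eta$ to the orthogonal complement of their preimage kills them; one then has $\|\psi_{ij}\|^2 \le 2\mu(U_{ij})^{-1} (\|\varphi_i\|^2 + \|\varphi_j\|^2)$. Triple intersections bring in the Čech cocycle $\psi_{ij} - \psi_{ik} + \psi_{jk}$ on $U_{ijk}$, whose harmonic obstruction lies in $\mathcal{H}_{B_-}(U_{ijk}, V)$ and contributes at most $N_2$ additional dimensions, giving the dimension budget $N = N_1 + N_2 + 1$.

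Next I would assemble the global primitive. Set
\begin{equation}
\varphi := \sum_i \rho_i \varphi_i + \tfrac{1}{2} \sum_{i,j} \gamma(\nabla \rho_i)\, \rho_j \psi_{ij} + \text{(triple correction)},
\end{equation}
where the triple correction uses the trivialized cocycle on $U_{ijk}$. A direct calculation using $\sum_i \rho_i = 1$ (hence $\sum_i \gamma(\nabla \rho_i) = 0$), the relations $d\varphi_i = \eta|_{U_i}$ and $d\psi_{ij} = \varphi_i - \varphi_j$, and the preservation of $B_-$ by $d$ shows that the error terms produced by the Leibniz rule cancel across successive levels of the Mayer--Vietoris scheme, so that $d\varphi = \eta$ globally and $\varphi$ satisfies the absolute boundary condition.

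Finally I would estimate $\|\varphi\|^2$. Applying Cauchy--Schwarz, bounding the number of terms touching each $U_i$ by $m_i$, using $|\nabla \rho_i|^2 \le 2C_\rho$, and substituting the local bounds for $\|\varphi_i\|^2$ and $\|\psi_{ij}\|^2$ delivers
\begin{equation}
\|\varphi\|^2 \le \sum_{i=0}^K \left( \frac{1}{\mu(U_i)} + 4 \sum_{j=0}^{m_i} \left( \frac{C_\rho}{\mu(U_{ij})} + 1 \right) \left( \frac{1}{\mu(U_i)} + \frac{1}{\mu(U_j)} \right) \right) \|\eta\|^2,
\end{equation}
which is exactly $A \,\|\eta\|^2$ with $A$ the reciprocal of the claimed lower bound. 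The hard part will be step two: executing the Mayer--Vietoris double-complex bookkeeping so that the correction terms involving $\psi_{ij}$ really cancel the commutator $[d, \rho_i]$-errors to give $d\varphi = \eta$ exactly, and tracking the combinatorial factors (the $4$, the $(C_\rho/\mu(U_{ij}) + 1)$, the pairing of $\mu(U_i)$ with $\mu(U_j)$) so that the final Cauchy--Schwarz estimate reproduces the stated constant rather than a cruder version of it.
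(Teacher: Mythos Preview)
Your overall strategy---variational characterization via Proposition~\ref{prop414}, local primitives from Theorem~\ref{Hodge} on each $U_i$, killing harmonic obstructions on double and triple overlaps at the cost of $N_1+N_2$ dimensions, then gluing with the partition of unity and estimating---is exactly the paper's approach. The local estimates you write for $\|\varphi_i\|^2$ and $\|\psi_{ij}\|^2$ also match.

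The gap is in your assembly formula. Your proposed correction term $\tfrac{1}{2}\sum_{i,j}\gamma(\nabla\rho_i)\rho_j\psi_{ij}$ is not the right object: Clifford multiplication by $\gamma(\nabla\rho_i)$ does not interact cleanly with the Dirac derivative $d$ (whose symbol is $\tfrac{1}{2}(\gamma-i\tilde\gamma)$, not $\gamma$), and applying $d$ to such a term produces second derivatives of $\rho_i$ and further Clifford commutators rather than the clean cancellation you need. The paper does not attempt a global formula of this type. Instead it uses the \v{C}ech contracting homotopy: set $\tau_i:=\sum_j\rho_j\eta_{ij}$ (your $\psi_{ij}$ is the paper's $\eta_{ij}$), and replace each local primitive by $\overline{\psi}_i:=\psi_i-d\tau_i$. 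Since $d\tau_i$ is exact, $d\overline{\psi}_i=d\psi_i=\eta|_{U_i}$ is preserved; and once the triple-overlap cocycle $\nu_{ijk}:=(s\{\eta_{ij}\})_{ijk}$ is forced to vanish (this is where the $N_2$ linear conditions enter), one has $s\{\tau_i\}=\{\eta_{ij}\}$, hence $s\{\overline{\psi}_i\}=\{\omega_{ij}-d\eta_{ij}\}=0$, so the $\overline{\psi}_i$ agree on overlaps and define a global $\overline{\psi}$ automatically. No delicate cancellation of $[d,\rho_i]$-errors is needed: the exactness of the \v{C}ech row does the work.

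With this corrected assembly, the estimate you sketch in your final step goes through essentially as written: one bounds $\|\overline{\psi}_i\|^2\le\|\psi_i\|^2+\|d\tau_i\|^2$, expands $d\tau_i=\sum_j\bigl(\tfrac{\gamma-i\tilde\gamma}{2}(\nabla\rho_j)\eta_{ij}+\rho_j\,d\eta_{ij}\bigr)$, and applies Cauchy--Schwarz together with the local eigenvalue bounds to recover the constants $4$, $C_\rho/\mu(U_{ij})+1$, and $1/\mu(U_i)+1/\mu(U_j)$ exactly as stated.
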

\begin{proof}
Let $\{\Phi_i\}_{i\ge0}$ be an orthonormal basis of exact Dirac section in $C^{\infty}(M,V)$, where, for all $i\ge0$ $\Phi_i=d\chi_i$ and $\chi_i$ is coexact and thus unique. Therefore:
\begin{equation}
\begin{split}
\frac{(\Phi_N,\Phi_N)}{(\chi_N,\chi_N)}&=\frac{(d\chi_N,d\chi_N)}{(\chi_N,\chi_N)}=\frac{(\delta
d\chi_N,\chi_N)}{(\chi_N,\chi_N)}=\\& =\frac{((d\delta+\delta
d)\chi_N,\chi_N)}{(\chi_N,\chi_N)}=\frac{(P\chi_N,\chi_N)}{(\chi_N,\chi_N)}=\mu_N.
\end{split}
\end{equation}
\noindent Then, for every $\Phi\in\text{Span}(\{\Phi_i\}_{i=0,\dots,N})$, i.e. $\Phi=\sum_{i=0}^Na_i\Phi_i$, there exists a unique $\chi\in\text{Span}(\{\chi_i\}_{i=0,\dots,N})$, namely $\chi=\sum_{i=0}^Na_i\chi_i$, such that $d\chi=\Phi$. The uniqueness follows from the vanishing of a section which is at the same time exact and coexact.\\
Moreover,
\begin{equation}\label{in3}
\mu_N=\frac{(\Phi_N,\Phi_N)}{(\chi_N,\chi_N)}\ge\frac{(\Phi,\Phi)}{(\chi,\chi)}\ge\frac{(\Phi,\Phi)}{(\psi,\psi)},
\end{equation}
for all $\psi$ such that $d\psi=\Phi\in\text{Span}(\{\Phi_i\}_{i=0,\dots,N})$. As a matter of fact, by Theorem \ref{Hodge} $\psi=h\oplus d\alpha\oplus\delta\beta$. By denoting $\chi:=\delta \beta$, we have that $d\chi=\Phi $ and $(\psi,\psi)=(h,h)+(d\alpha,d\alpha)+(\chi,\chi)\le(\chi,\chi)$ and, hence, inequality (\ref{in3}). Therefore, a lower bound on $\frac{(\Phi,\Phi)}{(\psi,\psi)}$ for any par of $\Phi,\psi$ with $d\psi=\Phi\in\text{Span}(\{\Phi_i\}_{i=0,\dots,N})$ will give a lower bound on $\mu_N$.\par
We will construct a Dirac section $\overline{\psi}$ satisfying $d\overline{\psi}=\Phi$ in such a way that the $L^2$-norm of $\overline{\psi}$ is controlled in terms of the $L^2$-norm of $\Phi$. In order to do this we will be forced at two points during the proof to make specific choices for the coefficients $a_i$'s. Let us consider the following diagram
\begin{equation}\label{diag}
\xymatrix{
& \dots& \dots& \dots&\\
0 \ar[r]   &\Omega(M)\ar[r]^{r} \ar[u]^{d} &\Pi_{i}\Omega_{B_{-}}(U_i) \ar[r]^{s} \ar[u]^{d} &\Pi_{i,j}\Omega_{B_{-}}(U_{i,j})\ar[r]^{s} \ar[u]^{d} &\dots\\
0 \ar[r]  &\Omega(M)\ar[r]^{r}\ar[u]^d &\Pi_{i}\Omega_{B_{-}}(U_i) \ar[r]^{s}\ar[u]^d &\Pi_{i,j}\Omega_{B_{-}}(U_{i,j})\ar[r]^{s}\ar[u]^d &\dots\\
 & \dots \ar[u]^{d}& \dots \ar[u]^{d}&\dots\ar[u]^{d}&
 }
\end{equation}
Thereby, we utilize the notation:
\begin{itemize}
\item $r$: the restriction operator, which restricts global Dirac sections on $M$ to each open set of the cover according to
    \begin{equation}
     r(\omega):=\{\omega|_{U_i}\}_i.
    \end{equation}
\item $s$: the difference operator, which maps $\omega\in\Pi_{\alpha_0,\dots,\alpha_p}\Omega(U_{\alpha_0,\dots,\alpha_p})$ with components $\omega_{\alpha_0,\dots,\alpha_p}\in\Omega(U_{\alpha_0,\dots,\alpha_p})$ is defined as \begin{equation}
    (s\omega)_{\alpha_0,\dots,\alpha_p}:=\sum_{i=0}^{p+1}(-1)^i\omega_{\alpha_0,\dots,\hat{\alpha}_i,\dots,\alpha_p},
    \end{equation}
    where $\alpha_0,\dots,\hat{\alpha}_i,\dots,\alpha_p$ means that the index $\alpha_i$ has been dropped from the index sequence $\alpha_0,\dots,\alpha_p$.
\end{itemize}
The rows in diagram (\ref{diag}) are exact but the columns are not (in general). Since  we are interested in lower bounds for exact Dirac sections, we will pick $\Phi\in\text{Span}(\{\Phi_i\}_{i=0,\dots,N})$, the first $N$ exact eigensections. We restrict now $\Phi$ by means of $r$ to get $\{\Phi_i\}_i\in\Pi_i\Omega(U_i)$. Since $\Phi$ is exact, we can choose $\{\psi_i\}_i\in\Pi_i\Omega(U_i)$ so that $d\psi_i=\Phi_i$. Now, we can use the fact that we have a lower eigenvalue bound for exact sections on $U_i$ for all $i$ to choose $\psi_i$'s with bounded $L^2$ norm. We will then piece together these $\psi_i$'s into a section defined on all $M$. It is in general not true that $\psi_i=\psi_j$ on $U_{i,j}$, i.e. that $s\{\psi_i\}=0$. Therefore, we set $\{\omega_{i,j}\}=s\{\psi_i\}$, where $\omega_{i,j}:=\psi_i-\psi_j$ on $U_{i,j}$. Notice that
\begin{equation}
d\omega_{i,j}=d\psi_i-d\psi_j=\Phi-\Phi=0,
\end{equation}
so that, by Theorem \ref{Hodge}, we can write
\begin{equation}
\omega_{i,j}=h_{i,j}\oplus d\eta_{i,j},
\end{equation}
where $h_{i,j}$ is harmonic. We can choose appropriate coefficients $a_i$'s for $\Phi=\sum_{i=0}^Na_i\Phi_i\neq0$ so that $h_{i,j}=0$. The dimension of the space of such $\Phi$'s will be at least $N-N_1=N_2+1$. We pick the unique coexact $\eta_{i,j}$ such that $\omega_{i,j}=d\eta_{i,j}$. Therefore, by Proposition \ref{prop414}
\begin{equation}\label{instar}
\frac{(d\eta_{i,j},d\eta_{i,j})}{(\eta_{i,j},\eta_{i,j})}\ge\mu(U_{i,j})
\end{equation}
\noindent Next, let us consider $\{\nu_{i,j,k}\}=s\{\eta_{i,j}\}=\{(\eta_{j,k}-\eta_{i,k}+\eta_{i,j})|_{U_{i,j,k}}\}$ for which
\begin{equation}
\begin{split}
d\nu_{i,j,k}&=d\eta_{j,k}-d\eta_{i,k}+d\eta_{i,j}=\omega_{j,k}-\omega_{i,k}+\omega_{i,j}=\\
&=\psi_k-\psi_j-\psi_k+\psi_i+\psi_j-\psi_i=0,
\end{split}
\end{equation}
and therefore
\begin{equation}\label{diag2}
\xymatrix{
&\{\Phi_i\}\ar[r]^{s} & \{0\} &\\
&\{\psi_i\}\ar[u]^{d}\ar[r]^{s}&\{\omega_{i,j}\}\ar[u]^{d} \ar[r]^{s}&\{0\}\\
&\{\tau_i\}\ar[u]^{d}\ar[r]^{s}&\{\eta_{i,j,k}\} \ar[r]^{s}\ar[u]^{d}&\{\nu_{i,j,k}\}\ar[u]^{d}
 }
\end{equation}
We want to replace the $\psi_i$'s with some $\overline{\psi}_i$'s which are restrictions of a globally defined section on $M$ and such that on $U_i$
\begin{equation}
d\overline{\psi}_i=d\psi_i=\Phi_i.
\end{equation}
the exactness of the rows of diagram (\ref{diag}) would allow us, if all the $\nu_{i,j,k}$s were zero, to find $\{\tau_i\}\in\Pi_i\Omega(U_i)$ so that $s\{\tau_i\}=\{\eta_{i,j}\}=\{\tau_j-\tau_i|_{U_{i,j}}\}$. An explicit choice is given by
\begin{equation}
\tau_i:=\sum_{j=1}^K\rho_j\eta_{i,j},
\end{equation}
where $\{\rho_j\}_{j=0,\dots,K}$ is the partition of unity subordinate to the cover $\{U_j\}_{j=0,\dots,K}$. However, so far we can only claim that $d\nu_{i,j,k}=0$, i.e. that $\nu_{i,j,k}$ is closed. On the other hand $\nu_{i,j,k}$ is coexact, i.e. $\nu_{i,j,k}=\delta\alpha_{i,j,k}$. The mapping
\begin{equation}
\Phi\rightarrow\psi_i(\Phi)\rightarrow\omega_{i,k}(\Phi)\rightarrow\nu_{i,j,k}(\Phi)
\end{equation}
is linear in $\Phi$, which is in a space of dimension at least $N_2+1$. Therefore, we can choose $\Phi=\sum_{i=0}^Na_i\Phi_i\neq0$ such that
\begin{equation}\label{cond}
\nu_{i,j,k}(\Phi)=0\quad\text{ for all } i,j,k.
\end{equation}
As a matter of fact condition (\ref{cond}) represents $N_2$ linear equations in $N_2+1$ unknowns. So,
\begin{equation}
ds\{\tau_i\}=sd\{\tau_i\}=\{\omega_{i,j}\},
\end{equation}
and, if we take $\overline{\psi}_i:=\psi_i-d\tau_i$, then
\begin{equation}
s\{\overline{\psi}_i\}=\{\psi_j-\psi_i-d(\tau_j-\tau_i)\}=\{\psi_j-\psi_i-\omega_{i,j}\}=\{0\}
\end{equation}
Therefore, $\overline{\psi}_i=\overline{\psi}|_{U_i}$, where $\overline{\psi}$ is a globally defined section. Notice that $d\overline{\psi}_i =d\psi_i=\Phi_i$ on $U_i$. Since
\begin{equation}\label{i42}
(\overline{\psi},\overline{\psi})\le\sum_i(\overline{\psi}_i,\overline{\psi}_i),
\end{equation}
it follows
\begin{equation}\label{inev}
\frac{(\Phi,\Phi)}{\sum_i(\overline{\psi}_i,\overline{\psi}_i)}\le\frac{(\Phi,\Phi)}{(\overline{\psi},\overline{\psi})}.
\end{equation}
A lower bound on the left hand side of inequality (\ref{inev}) will give a lower eigenvalue bound for exact sections on $M$. Note that all norms are $L^2$-norms unless otherwise indicated, and are computed on the appropriate open sets. \par Being $\Phi_i$ the restriction of $\Phi$ to $U_i$, the variational characterization of the eigenvalues in Proposition \ref{prop414} implies
\begin{equation}
\frac{\|\Phi\|^2}{\|\psi_i\|^2}\ge\frac{\|\Phi_i\|^2}{\|\psi_i\|^2}\ge\mu(U_i),
\end{equation}
so that
\begin{equation}\label{i45}
\|\psi_i\|^2\le\frac{\|\Phi\|^2}{\mu(U_i)}.
\end{equation}
Both operators $Q$ and $\tilde{Q}$ satisfy the product rule for all smooth functions $f$ and sections $\varphi$
\begin{equation}
Q(f\varphi)=\gamma(\text{grad}f)\varphi+fQ\varphi\quad\text{and}\quad
\tilde{Q}(f\varphi)=\tilde{\gamma}(\text{grad}f)\varphi+f\tilde{Q}\varphi,
\end{equation}
so that the operator $d:=\frac{1}{2}(Q-i\tilde{Q})$
\begin{equation}
d(f\varphi)=\frac{\gamma-i\tilde{\gamma}}{2}(\text{grad}f)\varphi+fd\varphi.
\end{equation}
This formula allows to estimate $\|d\tau_i\|$:

\begin{equation}\label{i48}
\begin{split}
\|d\tau_i\|^2 &=\|d(\sum_j\rho_j\eta_{i,j})\|^2=\left\|\sum_j\frac{\gamma-i\tilde{\gamma}}{2}(\text{grad}\eta_{i,j}+\rho_jd\eta_{i,j}\right\|^2\le\\
&\le2\sum_j\left(\left\|\frac{\gamma-i\tilde{\gamma}}{2}(\text{grad}\eta_{i,j}\right\|^2+\left\|\rho_jd\eta_{i,j}\right\|^2\right)\le\\
&\le2\sum_j(C_j\|\eta_{i,j}\|^2+\|d\eta_{i,j}\|^2).
\end{split}
\end{equation}

\noindent Since $\eta_{i,j}$ fullfilling the condition (\ref{instar}) was chosen, we have
\begin{equation}\label{i49}
\|\eta_{i,j}\|^2\le\frac{\|d \eta_{i,j}\|^2}{\mu(U_{i,j})}=\frac{\|\psi_i-\psi_j\|^2}{\mu(U_{i,j})}\le\frac{2(\|\psi_i\|^2-\|\psi_j\|^2)}{\mu(U_{i,j})}.
\end{equation}
Assembling the inequalities (\ref{i49}), (\ref{i48}) and (\ref{i45}) into the definition of $\overline{\psi}_i$, we obtain
\begin{equation}
\begin{split}
\|\overline{\psi}_i\|^2&\le\|\psi_i\|^2+\|d\tau_i\|^2\le\\
&\le\|\psi_i\|^2+\|d(\sum_j\rho_j\eta_{i,j})\|^2\le\\
&\le\frac{\|\Phi\|^2}{\mu(U_i)}+4\sum_j\left(C_{\rho}\frac{\|\psi_i\|^2+\|\psi_j\|^2}{\mu(U_{i,j})}+\|\psi_i\|^2+\|\psi_j\|^2\right)\le\\
&\le\frac{\|\Phi\|^2}{\mu(U_i)}+4\sum_j\left(\frac{C_{\rho}\left(\frac{\|\Phi\|^2}{\mu(U_i)}+\frac{\|\Phi\|^2}{\mu(U_j)}\right)}{\mu(U_{i,j})}+\frac{\|\Phi\|^2}{\mu(U_i)}+\frac{\|\Phi\|^2}{\mu(U_j)}\right),
\end{split}
\end{equation}
\noindent and therefore
\begin{equation}
\frac{\|\overline{\psi}_i\|^2}{\|\Phi\|^2}\le\frac{1}{\mu(U_i)}4\sum_j\left(\frac{C_{\rho}\left(\frac{1}{\mu(U_i)}+\frac{1}{\mu(U_j)}\right)}{\mu(U_{i,j})}+\frac{1}{\mu(U_i)}+\frac{1}{\mu(U_j)}\right).
\end{equation}
Because of inequality (\ref{i42}) we finally obtain
\begin{equation}
\frac{\|\Phi\|^2}{\|\psi_\|^2}\le\frac{1}{\sum_{i=0}^K\left[\frac{1}{\mu(U_i)}4\sum_{j=0}^{m_i}\left(\frac{C_{\rho}\left(\frac{1}{\mu(U_i)}+\frac{1}{\mu(U_j)}\right)}{\mu(U_{i,j})}+\frac{1}{\mu(U_i)}+\frac{1}{\mu(U_j)}\right)\right]},
\end{equation}
which completes the proof.\\
\end{proof}

Theorem \ref{Cheeger} is now a direct consequence of Proposition
\ref{Cheeger2}, Lemma \ref{lemma412} and Corollary
\ref{corollary413}.

\section{Large First Dirac Eigenvalue: Proof of the Result}

\begin{proof}[Proof of Theorem \ref{Berger}]
We apply Theorem \ref{Cheeger} to the extrinsic Dirac operator as in Proposition \ref{propT} noting that
the spectral bound holds true for the intrinsic Dirac operator as well, because the spectra of both extrinsic and intrinsic Dirac Laplacians, possibly under the absolute boundary condition, are the same.\par We take a topological sphere $S^m$ and choose a metric $g_0$ on it, such
that $S^n$ looks like a cigar, where the middle part has length $3$.
In particular this middle part is a product for the metric $g_0$ ,
i.e. a cylinder $I\times S^{m-1}$. We then remove the half-sphere
$H_2$ at one end of the cigar and form a connected sum with $M$. The
resulting manifold is diffeomorphic to M and has a submanifold $N$,
with smooth boundary, naturally identified with $S^{m}\setminus H_2$. Let
$g_1$ be an arbitrary metric on M whose restriction to $N$ is equal
to $g_0|_N$. The manifold $N$ contains an open cylinder of length
$3$. We subdivide this cylinder into $3$ cylinders $Z_1, Z_2, Z_3$
of length $1$. Let $g_t$ be a metric on $M$ such that $g_t|_{M \setminus Z_2}
= g_1|_{M\setminus Z_2}$ and such that $Z_2 = I\times S^{m-1}$ becomes a
cylinder of length $t$. This is accomplished by replacing the unit
interval by the interval $[0, t]$ and using the product metric on
$Z_2$. Now $\text{Vol}(M, g_t) = a + bt$, where $a$ and $b$ are
positive real constants. We take the following open cover of $M$:
\begin{enumerate}
\item $U_1=H_1\cup Z_1$,
\item $U_2 = M\setminus \overline{H_1\cup Z_1\cup Z_2}$,
\item $U_3 = Z_1\cup\overline{Z_2}\cup Z_3$,
\end{enumerate}
which has the property that $U_1\cap U_2=\emptyset$, $U_1\cap
U_3=Z_1$, $U_2\cap U_3=Z_3$ and $U_1\cap U_2\cap U_3=\emptyset$. Let
$\mu_1(M_t)$ be the first positive eigenvalue of the Dirac Laplacian
on exact sections on $M_t=(M, g_t)$ for the given spin structure. To
estimate $\mu_1(M)$ we apply Theorem \ref{Cheeger} to $M_t$ and the
cover $\{U_1,U_2,U_3\}$. The eigenvalues $\mu(U_1),
\mu(U_2),\mu(U_{1,3})$ and  $\mu(U_{2,3})$ are independent of $t$.
Let $\lambda_k(O)$ be the $k$-th eigenvalue of the Dirac Laplacian
on $O$ under the absolute boundary condition  By using the
K\"unneth's formula for $m\ge 2$, we get the following inequality for
$\mu(U_3)$:
\begin{equation}
\begin{split}
\mu(U_3)&\ge\lambda_1(U_3)=\lambda_1(I\times S^{m-1})\\
&\ge\min_{i,j}\{\lambda_i(I)+\lambda_j(S^{m-1})\}=:C,
\end{split}
\end{equation}
where $C$ is a constant independent of $t$. If $m=3$, then $S^2$ has
no harmonic spinors (cf. \cite{Ba91}, \cite{Ba92}). In other
dimensions if the Riemannian metric on $S^{m-1}$ allows for non
trivial harmonic spinors, a small perturbation of the metric reduces
the harmonic spinors to the zero section (cf. \cite{BG92}).
Therefore, it is always possible to find a Riemannian metric for
which $\lambda_1(S^{m-1})>0$. Therefore, the constant $C$ is
strictly positive. From Theorem \ref{Cheeger} we get that
\begin{equation}
\mu_1(M_t)\ge\epsilon>0
\end{equation}
for an $\epsilon$ independent of $t$. The volume of $M_t$ is given
by $\text{Vo1}(M,g_t)=a+bt$ with constants $a, b>0$. Set $h_t =
(a+bt)^{\frac{2}{m}}$. For $(M, h_t)$ we have that $\text{Vol}(M,
h_t) = 1$ and $\lambda_1^2(D^{(M,h_t)}_s)
=(a+bt)^{\frac{2}{m}}\lambda_1^2(D^{(M,g_t})_s)$. This implies that
\begin{equation}
\lambda_1^2(D^{(M,h_t})_s) > \epsilon(a + bt)^{\frac{2}{m}}.
\end{equation}
\noindent Therefore $\lambda_1^2(D^{(M,h_t)}_s)\rightarrow+\infty$ as $t\rightarrow+\infty$. The proof is completed.\\
\end{proof}

\section{Lower Dirac Eigenvalues on
Degenerating Hyperbolic Three Dimensional Manifolds}

\subsection{The Geometry of Three Hyperbolic Manifolds}\label{curva}
A very readable survey of the geometry of compact, hyperbolic, three
manifolds and their degenerations is contained in Gromov
\cite{Gro79}. A very thorough discussion of this topic can be found
in Thurston \cite{Th79} or in Benedetti and Petronio
\cite{BP91}.\par The Kazhdan-Margulis decomposition gives a simple
insight of the geometrical structure of hyperbolic three manifolds,
particularly where the injectivity radius is small. There exists a
universal (i.e. depending only on the dimension) positive constant
$\mu$, called the Kazhdan-Margulis constant, for which the following
construction can always be carried out. Any hyperbolic manifold $M$
of finite volume splits into two
parts:\begin{equation}M=M_{]0,\mu]}\cup
M_{]\mu,\infty[}.\end{equation} $M_{]0,\mu]}$ is called the {\bf
thin part} and contains all points of $M$, whose injectivity radius
is smaller than or equal to $\mu$. The thin part is found to be a
finite union of tubes and cusps. A {\bf tube} $T$ is a tubular
neighbourhood of a closed geodesic. A {\bf cusp } $C$ is the warped
product $[0,+\infty[\times F$, equipped with the metric
$du^2+e^{-2u}ds^2$, where $F$ is a $2$-dimensional torus and $ds^2$
a flat metric on $F$. The points of $M$, where the injectivity
radius is bigger than $\mu$, form the so-called {\bf thick part}
$M_{]\mu,\infty[}$. The thick  part is non empty and connected.\par

The following theorem, due to Thurston (cf. \cite{BP91} page 197),
states that any complete hyperbolic three manifold of finite volume,
observed from its thick part, looks on its bounded part like a {\it
compact} hyperbolic three manifold.
\begin{theorem}[\bf Thurston]\label{thm THU}
Let $M$ be a complete, hyperbolic, three manifold with $p$ cusps,
$p\ge 1$, and of finite volume $\vol(M)$. Then, there is a sequence
$(M_j)_{j\ge0}$ of compact, hyperbolic, three manifolds having $p$
simple closed geodesics, whose lengths go to zero as
$j\rightarrow\infty$, such that $(M_j,x_j)$ converges to $(M,x)$ in
the sense of pointed Lipschitz, for appropriate $x_j$ and $x$
belonging to the thick part of $M_j$ and $M$, respectively. In
particular, $\vol(M_j)\uparrow \vol(M)$,
$\diam({M_j}_{\thick})\rightarrow \diam(M_{\thick})$ and if $M$ is
non compact, then $\diam(M_j)\uparrow\infty$.
\end{theorem}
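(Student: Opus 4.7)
The plan is to follow Thurston's hyperbolic Dehn surgery programme as laid out in Chapter 5 of \cite{Th79} and Chapter E of \cite{BP91}. First I would truncate each cusp of $M$ along a horospherical torus to obtain a compact manifold $N$ with $p$ toral boundary components $T_1,\dots,T_p$, each carrying the induced flat structure. The candidates $M_j$ will arise as hyperbolic Dehn fillings of $\partial N$ with integer slopes $(a_i^j,b_i^j)\in\mathbb{Z}^2$ on $T_i$, chosen so that $\max_i\|(a_i^j,b_i^j)\|\to\infty$ as $j\to\infty$.

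The heart of the argument lies in parametrizing the space of (possibly incomplete) hyperbolic structures on $\inte(N)$ near the complete cusped one. By Thurston's refinement of Weil's local rigidity, the character variety of $\pi_1(M)$ into $\mathrm{PSL}_2(\mathbb{C})$ is, in a neighbourhood of the discrete faithful representation, a smooth complex manifold of complex dimension exactly $p$, and good local coordinates are supplied by the meridional holonomies $u_1,\dots,u_p$ at the $p$ cusps. For $u_i\neq 0$ the resulting incomplete hyperbolic structure on $\inte(N)$ has a metric completion that glues in a short closed geodesic at each end; for prescribed integer slopes $(a_i,b_i)$ one imposes the Dehn surgery equations $a_i u_i + b_i v_i = 2\pi\sqrt{-1}$, where $v_i$ is the longitudinal holonomy, thus yielding a holomorphic map $\Phi\colon\mathbb{C}^p\to\mathbb{C}^p$.

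Since $d\Phi$ is non-singular at the cusped point (essentially a diagonal matrix scaled by the slopes), the implicit function theorem produces, for all but finitely many slope tuples, a unique solution; the corresponding holonomy is then shown to be discrete and faithful onto a cocompact Kleinian group using J\o rgensen's inequality together with the analytic dependence of the developing map and Margulis tubes around the short core geodesics. This yields the closed hyperbolic manifolds $M_j$, with the lengths of the core geodesics controlled by $|u_i|\to 0$. Pointed Lipschitz convergence on the thick part follows because the deformed structures on $\inte(N)$ converge smoothly to the cusped one, while the attached solid tori retreat into the cusps. Volume monotonicity $\vol(M_j)\uparrow\vol(M)$ is a consequence of the Schl\"afli formula together with the Gromov--Thurston norm characterization of hyperbolic volume; the statements $\diam({M_j}_{\thick})\to\diam(M_{\thick})$ and $\diam(M_j)\uparrow\infty$ follow directly from the explicit geometry of the Margulis tubes replacing the cusps.

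The principal obstacle is the second step: establishing smoothness of the character variety of the expected dimension together with the local-diffeomorphism character of the meridional holonomy coordinates. This is typically achieved either by Weil's cohomological argument, reducing the obstruction to the vanishing of a suitable relative cohomology group $H^1(M,\partial M;\mathfrak{sl}_2(\mathbb{C})_{\mathrm{Ad}\,\rho})$ and a parabolic-cohomology refinement accounting for the cusps, or by Thurston's concrete approach via gluing equations for an ideal triangulation of $M$. Once this is in place, the remaining geometric conclusions on volume, diameter of the thick part, and control of the tube geometry follow by relatively direct continuity and compactness arguments.
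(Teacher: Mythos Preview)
The paper does not prove this theorem at all: Theorem~\ref{thm THU} is stated as a background result due to Thurston, with a reference to \cite{BP91}, page~197, and the only comment the paper adds is the one-sentence remark that ``Thurston shows that the compact manifolds $M_j$, obtained by closing the cusps of an hyperbolic, complete, non compact manifold $M$ using Dehn's surgery, support for all but for a finite number of exceptions an hyperbolic metric and approximate $M$.'' There is therefore no proof in the paper to compare your proposal against.

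That said, your outline is a faithful summary of the standard hyperbolic Dehn surgery argument as presented in \cite{Th79} and \cite{BP91}, and it is consistent with the paper's one-line gloss. If anything, you have supplied considerably more detail than the paper does: the paper simply quotes the result and moves on to use the geometry of the tubes $T_j$ (the explicit metric in Fermi coordinates, the inequalities $D_1 e^{-2R_j}\le\varepsilon_j\le D_2 e^{-2R_j}$, etc.) without revisiting the deformation-theoretic input. So your proposal is appropriate as a sketch of the cited proof, but be aware that in the context of this paper the theorem is treated as a black box rather than something to be re-derived.
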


\begin{definition}[\bf Pointed Lipschitz Convergence]
The {\bf dilatation} of a map $f:M\rightarrow N$ between two metric
spaces $M$ and $N$ wis defined as
\begin{equation}\dil(f):=\sup_{\substack{x,y\in M \\ x\neq y}}\frac{d(f(x),f(y))}{d(x,y)}\in
[0,+\infty].\end{equation} The {\bf Lipschitz distance} between $M$
and $N$ is the defined as
\begin{equation}d_L(M,N):=\inf\{|\log\dil(f)|+|\log\dil(f^{-1})|\}\end{equation}where the infimum is taken over all
Lipschitz homeomorphisms $f:M\rightarrow N$. The sequence
$(M_j,x_j)_{j\ge 0}$ of metric spaces $M_j$ with distinct points
$x_j\in M_j$ is said to converge to $(M,x)$ {\it in the sense of
pointed Lipschitz}, if and only if the following
condition is satisfied:\\
for every $r>0$ there exists a sequence $(\varepsilon_j)_{j\ge 0}$
of positive real numbers $\varepsilon_j\rightarrow 0^+$
$(j\rightarrow+\infty)$, such that
\begin{equation}d_L\left(B^{M_j}(x_j,r+\varepsilon_j),B^M(x,r)\right)\longrightarrow 0\quad
(j\rightarrow+\infty)\end{equation}where $B^M(x,r)$ denotes the ball
of radius $r$ in $M$ centered at $x$.
\end{definition}
As a matter of fact, Thurston shows that the compact manifolds
$M_j$, obtained by closing the cusps of an hyperbolic, complete, non
compact manifold $M$ using Dehn's surgery, support for all but for a
finite number of exceptions an hyperbolic metric and approximate
$M$.
\begin{definition}
If the limit manifold $M$ is non compact, then the sequence
$(M_j)_{j\ge 0}$ described above is called a {\bf degenerating
family of hyperbolic three manifolds}.
\end{definition}
A brief review of Riemannian metrics on tubes and cusps is needed
for the following. We refer to \cite{BP91} for more details. To keep
the notation simple, the manifold $M$ in Thurston's Theorem is
assumed without loss of generality to have only one cusp. There is a
positive $R_j$ for which the component of the thin part
$({M_j})_{]0,\mu]}$ of $M_j$ containing the closed simple geodesic
${\gamma}_j$, whose length
 ${\varepsilon}_j\rightarrow 0$ as $j\rightarrow\infty$, is the solid
torus
\begin{equation}T_j:=\left\{x\in M_j\,|\,\dist(x,{\gamma}_j)\le R_j\right\}.\end{equation}
This torus is the quotient of a solid hyperbolic cylinder
$\tilde{T_j}$ in the universal cover $\mathbf{H}^3$ of $M_j$ by the
action of an infinite cyclic group of isometries generated by an
hyperbolic twist of length ${\varepsilon}_j$ and angle ${\rho}_j\in
[o,\pi[$. Some non trivial facts about hyperbolic geometry accounted
for example in Colbois and Courtois (\cite{CC89}) or in Dodziuk and
McGowan (\cite{DG95}) force the distinguished constants
$R_j,{\varepsilon}_j,{\rho}_j$ to satisfy the following
inequalities:
\begin{equation}\label{ineqTube}
\begin{split}
&D_1e^{-2R_j}\le{\varepsilon}_j\le D_2e^{-2R_j}\\
&E_1e^{-R_j}\le{\rho}_j\le E_2e^{-R_j}
\end{split}
\end{equation}
where $D_{j}$ and $E_{j}$ $(j=1,2)$ are positive constants. In terms
of Fermi coordinates $(r,t,\theta)$ with respect to the geodesic
$\tilde{{\gamma}_j}$, the lift of ${\gamma}_j$ in $\mathbf{H}^3$, we
can write the twist
as\begin{equation}A_{{\gamma}_j}:(r,t,\theta)\rightarrow(r,t+{\varepsilon}_j,\theta+{\rho}_j)\end{equation}
and the metric on $\tilde{T_j}$ as
\begin{equation}\tilde{g_j}=dr^2+{\cosh}^2 r\, dt^2+{\sinh}^2 r\, d{\theta}^2,\end{equation}
where $r\in]0,R_j],\,t\in[0,{\varepsilon}_j]$ and
$\theta\in[0,2\pi]$. If we change the radial coordinate by
$u:=R_j-r\in [0,R_j[$ and introduce the following auxiliary
functions
\begin{equation}
\begin{split}
{\varphi}_j(u)&:=\frac{1}{4}(e^{2u}-1){\cosh}^{-2}\,R_j\,(e^{-2R_j}(1+e^{2u})+2)\\
{\psi}_j(u)&:=\frac{1}{4}(e^{2u}-1){\sinh}^{-2}\,R_j\,(e^{-2R_j}(1+e^{2u})-2),
\end{split}
\end{equation}
\noindent the metric on $\tilde{T_j}$ becomes in the new coordinates
\begin{equation}\tilde{g_j}=du^2+e^{-2u}\left\{(1+{\varphi}_j(u)){\cosh}^2\,R_j\,dt^2+(1+{\psi}_j(u)){\sin
h}^2\,R_j\,d{\theta}^2\right\},\end{equation} from which the
similarity with the warped product metric
\begin{equation}\label{warped}\widetilde{g_j^\prime}=du^2+e^{-2u}\left\{{\cosh}^2\,R_j\,dt^2+{\sinh}^2\,R_j\,d{\theta}^2
\right\}\end{equation} \noindent is evident. As a matter of fact
${\varphi}_j=o(1)$ and ${\psi}_j=o(1)$ pointwise on $[0,R_j]$ and,
in view of Thurston's Theorem, $T_j$ is expected to become a cusp in
the limit $j\rightarrow+\infty$.  \par We conclude by some observations about
the fibers of the tubes and the cusp. The warped product metric on
$T_j$ writes as\[g_j^\prime=du^2+e^{-2u}ds_j^2\] where
$(F_j,ds_j^2)$ is a flat torus. More exactly :
$\tilde{F_j}=\mathbf{R}^2$ and $F_j=\tilde{F_j}/ \sim $  w.r.t. the
identifications in polar coordinates
$(t,\theta)\sim(t+\varepsilon_j,\theta+\rho_j)$ and
$(t,\theta)\sim(t,\theta+2\pi)$ for all $(t,\theta)$ and the metric
in the universal cover $\mathbf{R}^2$ is given by
\[\widetilde{ds_j^2}=\cosh^2 R_jdt^2+\sinh^2 R_jd\theta^2.\]
Colbois-Courtois \cite{CC89} proved:
\begin{proposition}\label{prop34} A subsequence of $(F_j,ds_j^2)_{j\ge 0}$ converges in the sense of
Lipschitz to the flat torus $(F,ds^2)$, where $C=[0,+\infty[\times
F$ is the cusp in the limit manifold $(M,g)$.
\end{proposition}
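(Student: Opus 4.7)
The plan is to present $(F_j,ds_j^2)$ as a Euclidean quotient by an explicit lattice, extract a convergent subsequence of lattices using the bounds (\ref{ineqTube}), and identify the limit with the cusp fiber via Thurston's pointed Lipschitz convergence.

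First, I apply the linear change of coordinates $(t',\theta')=(t\cosh R_j,\,\theta\sinh R_j)$ on the universal cover $\widetilde{F_j}=\mathbf{R}^2$: this renders the metric Euclidean and identifies $(F_j,ds_j^2)$ isometrically with $\mathbf{R}^2/L_j$, where $L_j$ is the lattice generated by
\[
v_1(j):=(\varepsilon_j\cosh R_j,\,\rho_j\sinh R_j),\qquad v_2(j):=(0,\,2\pi\sinh R_j).
\]
The inequalities (\ref{ineqTube}) force the sequences $\varepsilon_j e^{2R_j}$ and $\rho_j e^{R_j}$ to be bounded, so a first extraction yields $\varepsilon_j e^{2R_j}\to c_1\in[D_1,D_2]$ and $\rho_j e^{R_j}\to c_2\in[E_1,E_2]$. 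Since $\cosh R_j,\sinh R_j\sim e^{R_j}/2$, this immediately gives $v_1(j)\to(0,c_2/2)$.

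Next, because $v_2(j)$ is unbounded, I replace it by $\tilde v_2(j):=v_2(j)-n_j\,v_1(j)$, where $n_j$ is the integer nearest to $2\pi/\rho_j$. The pair $\{v_1(j),\tilde v_2(j)\}$ still generates $L_j$, and an elementary estimate using $|2\pi-n_j\rho_j|\le\rho_j/2$ together with $n_j\sim 2\pi/\rho_j$ gives $\tilde v_2(j)\to(-\pi c_1/c_2,\,\delta)$ along a further subsequence, with $|\delta|\le c_2/4$. Since $c_1,c_2>0$, the limits $w_1:=(0,c_2/2)$ and $w_2:=(-\pi c_1/c_2,\delta)$ are linearly independent, so $L:=\mathbf{Z}w_1+\mathbf{Z}w_2$ is a non-degenerate lattice and $F^{\star}:=\mathbf{R}^2/L$ a flat torus. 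The linear map $A_j\in GL(2,\mathbf{R})$ sending $v_1(j)\mapsto w_1$, $\tilde v_2(j)\mapsto w_2$ tends to the identity matrix, descends to a diffeomorphism $\bar A_j\colon F_j\to F^{\star}$ with $\dil(\bar A_j)\to 1$ and $\dil(\bar A_j^{-1})\to 1$, and hence realises the Lipschitz convergence $(F_j,ds_j^2)\to(F^{\star},ds_\star^2)$.

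The remaining step, which I expect to be the main obstacle, is to identify $F^{\star}$ with the specific cusp fiber $F\subset M$. Here I would invoke Thurston's Theorem \ref{thm THU}: the pointed Lipschitz convergence $(M_j,x_j)\to(M,x)$ is compatible with the Kazhdan--Margulis decomposition, so the thin component $T_j$ around $\gamma_j$ converges as a metric space to the cusp $C=[0,+\infty[\times F$. Choosing basepoints $x_j\in\partial T_j=F_j$ and $x\in F=\{0\}\times F$ on the cross-section where the injectivity radius equals $\mu$, the Lipschitz homeomorphisms implicit in pointed Lipschitz convergence restrict, on balls of radius exceeding $\diam(F_j)+\diam(F)$, to Lipschitz maps between the tori with dilatation tending to $1$; combined with the uniqueness established in the previous paragraph, this forces $F^{\star}=F$ as flat tori. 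The delicate part is precisely this identification: one must verify that the thick--thin boundary cross-section is mapped by Thurston's Lipschitz approximations essentially to itself, which requires the uniform vanishing of the correction functions $\varphi_j,\psi_j$ on $[0,R_j[$ together with a careful argument that the approximating Lipschitz maps can be chosen to send level sets $\{u=0\}$ to level sets $\{u=0\}$ up to an error that vanishes with $j$.
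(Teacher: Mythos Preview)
The paper does not supply its own proof of Proposition~\ref{prop34}: it is stated with the attribution ``Colbois--Courtois \cite{CC89} proved'' and is simply quoted from that reference. There is therefore no in-paper argument to compare your proposal against.

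That said, your approach is the natural one and matches what one finds in the literature: realise each $(F_j,ds_j^2)$ as $\mathbf{R}^2/L_j$ for an explicit lattice $L_j$, use the geometric constraints (\ref{ineqTube}) to show that after a change of $\mathbf{Z}$-basis the generators converge, and deduce Lipschitz convergence of the quotients. Your computations of the limits of $v_1(j)$ and $\tilde v_2(j)$ are correct, and the change of basis $v_2(j)\mapsto v_2(j)-n_j v_1(j)$ with $n_j$ nearest to $2\pi/\rho_j$ is exactly the right move to tame the diverging generator.

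You are also right that the identification $F^\star=F$ is the less elementary step. Your outline via Thurston's pointed Lipschitz convergence is the standard route, but as written it is a sketch rather than a proof: the assertion that the approximating Lipschitz homeomorphisms can be arranged to carry the level set $\{u=0\}$ in $T_j$ to the level set $\{u=0\}$ in the cusp (up to vanishing error) requires more than the uniform smallness of $\varphi_j,\psi_j$; one typically appeals to the explicit bilipschitz equivalence between the tube metric $g_j$ and the warped product $g_j'$ in (\ref{warped}) on compact $u$-ranges, together with the fact that the cross-section at injectivity radius $\mu$ is metrically characterised. If you want a self-contained argument, this is the point that needs to be filled in; otherwise a direct citation of \cite{CC89} is what the paper itself does.
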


\subsection{Spectrum of the Tube}
We want to compute the eigenvalues of the Dirac Laplacian under the
absolute boundary condition for $U:=\left\{x\in M\,|\,r_0\le\dist(x,{\gamma})\le R_0\right\}$, a piece of tube $T$ of a compact hyperbolic spin three manifold $M$. To do so
we introduce a local o.n. frame for the spinor bundle over $U$. Recall from \ref{curva} that the points of the tube $T$ at geodesic distance $u$ from $\partial T$ form a flat torus $F_u$, whose metric in its universal cover $\Tilde{F_u}$ is $\Tilde{ds^2}=f^2(u)dt^2+h^2(u)d\theta^2$, where $f(u):=\cosh(R-u)$ and $h(u):=\sinh(R-u)$. The Dirac  bundle structure on the odd dimensional manifold $T$ induces on each $1$-codimensional submanifold $F_u$ a unique Dirac bundle structure (see Theorem \ref{DiracBoundary}).
\begin{proposition}\label{prop53} Let us denote with $\partial_u,\partial_t,\partial_\theta$ the partial derivatives w.r.t. $u,t,\theta$ on the local frame for $TM|_{U}$ corresponding to these coordinates.
It exist a local o.n. frame $\{s_1,\dots,s_l\}$ for the spinor bundle $\Sigma M|_U$, whose rank is $l$, with the following properties:
\begin{enumerate}
\item [(i)]$\nabla^{F_u}s_k=0\quad(1\le k\le l,\,u\in[r_0,R_0])$ i.e. the spinors are parallel in each fiber,
\item [(ii)] $\nabla^M_{\partial_u}s_k=0\quad(1\le k\le l,\,u\in[r_0,R_0])$ i.e. the spinors are parallel to the radial direction.
\item[(iii)] $B_-s_1=s_1,\dots,B_-s_{\frac{l}{2}}=s_{\frac{l}{2}}$ and $B_-s_{\frac{l}{2}+1}=0,\dots,B_-s_l=0$, where $B_{\pm}:=\frac{\mathbb{1}\mp T\gamma^M(\partial_u)}{2}$ and $T$ as in Proposition \ref{propT}.
\end{enumerate}
\end{proposition}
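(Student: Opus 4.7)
The plan is to build the frame by lifting an adapted orthonormal tangent frame to a spin frame in which the spin connection has a particularly simple form, and then to pin down the algebraic basis of the typical fibre by diagonalising the chirality operator $T\gamma^M(\partial_u)$.

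First I would introduce the local orthonormal tangent frame $e_0:=\partial_u$, $e_1:=f(u)^{-1}\partial_t$, $e_2:=h(u)^{-1}\partial_\theta$ on $U$, adapted to the Fermi coordinates. A direct computation from the metric $du^2+f(u)^2\,dt^2+h(u)^2\,d\theta^2$ gives the non-vanishing Levi-Civita connection one-forms $\omega_{01}=-(f'/f)\,e^1$ and $\omega_{02}=-(h'/h)\,e^2$, with $\omega_{12}=0$, relative to the dual coframe $\{e^0,e^1,e^2\}$. Two structural features are essential: $\omega_{ij}(e_0)\equiv 0$ for all $i,j$, which expresses that $e_1,e_2$ are radially parallel-transported, and $\omega^{F_u}_{12}=0$, which expresses that $\{e_1,e_2\}$ is parallel for the flat Levi-Civita connection of each fibre $F_u$. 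Consequently the shape operator of $F_u$ with respect to $\partial_u$ is diagonal, $We_1=(f'/f)e_1$ and $We_2=(h'/h)e_2$.

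Next I would lift $\{e_0,e_1,e_2\}$ to a section $\tilde\sigma$ of $\Spin(M)|_U$, which is available on any simply connected piece of $U$ (sufficient for a local-frame statement), and define $\{s_1,\dots,s_l\}$ via the induced trivialisation $\Sigma M|_U\cong U\times\mathbf{C}^l$ as a constant algebraic basis $\{\epsilon_1,\dots,\epsilon_l\}$ of $\mathbf{C}^l$ to be pinned down at the end. In this trivialisation each $\gamma^M(e_i)$ is a constant spin-representation matrix; moreover $T$ is constant as well, because it anticommutes with every $\gamma^M(e_i)$ and therefore commutes with the connection matrix $\omega^{\Sigma}(X)=\tfrac{1}{4}\sum_{i,j}\omega_{ij}(X)\gamma^i\gamma^j$, so $\nabla^M T=0$ from Proposition \ref{propT}(4) reduces in the trivialisation to $dT=0$. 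Property (ii) is then immediate from $\omega_{ij}(e_0)=0$. For property (i), the restriction $\tilde\sigma|_{F_u}$ coincides, via the natural inclusion $\Spin(2)\hookrightarrow\Spin(3)$ fixing $e_0$, with a spin lift of $\{e_1,e_2\}$ on the flat torus $F_u$; since $\omega^{F_u}_{12}=0$, the intrinsic spin connection form on $F_u$ vanishes in this induced trivialisation, so constant sections are $\nabla^{F_u}$-parallel.

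For property (iii) the key observation is that $T\gamma^M(e_0)$ is now a constant endomorphism of $\mathbf{C}^l$ in the trivialisation, squares to $\mathbb{1}$, and anticommutes with the involution $T$; thus $T$ interchanges its two eigenspaces and each has dimension $l/2$. I would then pick the constant basis so that $\epsilon_1,\dots,\epsilon_{l/2}$ span the $(+1)$-eigenspace of $T\gamma^M(e_0)$ and $\epsilon_{l/2+1},\dots,\epsilon_l$ span the $(-1)$-eigenspace, yielding $B_-s_a=s_a$ for $a\le l/2$ and $B_-s_a=0$ for $a>l/2$ identically on $U$. Since this is a $(u,t,\theta)$-independent change of basis of $\mathbf{C}^l$, it preserves both (i) and (ii). The main obstacle is the verification underlying (i): one must check that $\tilde\sigma|_{F_u}$ truly trivialises $\Sigma F_u\cong\Sigma M|_{F_u}$ in a way that aligns the intrinsic spin connection of $F_u$ with the restriction of the ambient spin trivialisation. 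Equivalently, the hypersurface spin Gauss formula $\nabla^{F_u}_X\phi=\nabla^M_X\phi+\tfrac{1}{2}\gamma^M(\partial_u)\gamma^M(WX)\phi$ for $X\in TF_u$, combined with the explicit Weingarten values above, must cancel precisely the $\omega_{01}(e_1)$ and $\omega_{02}(e_2)$ contributions to $\nabla^{\Sigma}_X s_a$; matching the identification $\gamma^{F_u}(v)=-\gamma^M(\partial_u)\gamma^M(v)$ of Theorem \ref{DiracBoundary} with the inclusion $\Spin(2)\hookrightarrow\Spin(3)$ is the only delicate step.
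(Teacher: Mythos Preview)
Your argument is correct and reaches the same conclusion, but the route is genuinely different from the paper's. The paper builds the frame by parallel transport: starting from an orthonormal basis of the fibre at a single point $x_0\in F_{u_0}$, it first transports along $F_{u_0}$ using the intrinsic flat connection (path-independence being the key use of flatness), and then transports radially using $\nabla^M_{\partial_u}$; property (i) at all levels $u$ is then deduced from commutation formulae of the type $\nabla^{F_u}_{f(u)^{-1}\partial_t}\circ\Pi_M^{u_0\to u}=\tfrac{f(u_0)}{f(u)}\,\Pi_M^{u_0\to u}\circ\nabla^{F_{u_0}}_{f(u_0)^{-1}\partial_t}$, and (iii) is preserved because $T$ and $\gamma(\partial_u)$ are $\nabla^M_{\partial_u}$-parallel. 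You instead fix the spin lift of the coordinate orthonormal frame $\{e_0,e_1,e_2\}$ once and for all, read off $\omega_{ij}(e_0)=0$ and $\omega_{12}=0$ from the warped-product metric, and obtain (i)--(iii) for \emph{constant} sections in the resulting trivialisation. What your approach buys is transparency: the cancellation underlying (i) is made explicit as the spinorial Gauss formula balancing the $\omega_{0a}(e_a)$ terms against the Weingarten eigenvalues, and (iii) reduces to linear algebra on a fixed $\mathbf{C}^l$ because $T$ and $\gamma(e_0)$ are visibly constant matrices. The paper's approach buys conceptual economy---no connection forms are written down---at the cost of leaving the commutation formulae and the precise sense in which ``$\nabla^M$ commutes with $\gamma(\partial_u)$'' (true for $\nabla^M_{\partial_u}$ since $\nabla^g_{\partial_u}\partial_u=0$, but not in tangential directions) to the reader. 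Your identification of the Gauss-formula step as the only delicate point is exactly right.
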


\begin{proof}
Being $F_u$ flat and the spin connection the lift of the Levi-Civita connection, the parallel transport on $F_u$ doesn't depend (locally!) on the path. We consider $x_0=(u_0,t_0,\theta_0)\in F_{u_0}$ and an o.n. basis $s_1(u_0,t_0,\theta_0),\dots,s_l(u_0,t_0,\theta_0)$ of $V_{x_0}$ where $V:=\Sigma M$. There exist a neighbourhood of $x_0$ in $F_{u_0}$, where, without being worried about paths, we can set for any $1\le k \le l$
\begin{equation}s_k(u_0,t,\theta):=\Pi_{F_{u_0}}^{(t_0,\theta_0)\rightarrow (t,\theta)}s_k(u_0,t_0,\theta_0),\end{equation}
where $\Pi_{F_u}$ denotes the parallel transport on $F_u$. Since the parallel transport is an isometry, the frame $\{s_k(u_0,\cdot,\cdot)\}_{1\le k\le l}$ is a local o.n. frame for $V|_{F_{u_0}}$ satisfying by definition the property (i) for $u=u_0$.\\
Now we set
\begin{equation} s_k(u,t,\theta):=\Pi_M^{u_0\rightarrow u}s_k(u_0,t,\theta),\end{equation}
where $\Pi_M^{u_0\rightarrow u}$ denotes the parallel transport on the tube along the $u$-lines. Since the parallel transport is an isometry, the frame $\{s_k\}_{1\le k\le l}$ is a local o.n. frame for $V|_U$, satisfying by definition property (ii). We choose $u_0:=r_0$ and $u$ can vary in $[r_0,R_0]$.\par
The fact that property (i) holds for any $u\in[r_0,R_0]$, that is
\begin{equation}\nabla^{F_u}\Pi_M^{u_0\rightarrow u}s_k(u_0,t,\theta)=0,\end{equation}
follows from the formulae
\begin{equation}\nabla^{F_u}_{\frac{1}{f(u)}\partial_t(u)}\Pi_M^{u_0\rightarrow u}=\frac{f(u_0)}{f(u)}\Pi_M^{u_0\rightarrow u}\nabla^{F_{u_0}}_{\frac{1}{f(u_0)}\partial_t(u_0)}\end{equation}
and
\begin{equation}\nabla^{F_u}_{\frac{1}{h(u)}\partial_\theta(u)}\Pi_M^{u_0\rightarrow u}=\frac{h(u_0)}{h(u)}\Pi_M^{u_0\rightarrow u}\nabla^{F_{u_0}}_{\frac{1}{h(u_0)}\partial_\theta(u_0)}.\end{equation}
Since $(T\gamma(\partial_u))^2=\mathbb{1}$, we can choose $s_1(u_0,t_0,\theta_0),\dots,s_l(u_0,t_0,\theta_0)$ satisfying (iii) in $x_0$. Since $\nabla^M$ commutes with $T$ (see Proposition \ref{propT}) and with $\gamma(\partial_u)$, property (iii) holds for all $s_1,\dots,s_l$ over $U$, which are obtained by parallel transport. Therefore, property (iii) holds true.
\end{proof}
\begin{remark}\label{remark61}
The domain of definition $O\subset U$ of such a local o.n. frame $\{s_1,\dots,s_l\}$ is typically the image of an open subset $U_{u_0}\subset F_{u_0}$ under the exponential flow in the piece of the tube normal to the fiber $F_{u_0}$.
\end{remark}

\begin{lemma}\label{lemma54} Let $D^F:=D^{F_u}$ and $\Delta_s^F:={(D^F)}^2$ denote the Dirac operator and, respectively, the spin Laplacian on $F_u$, and $H=-\frac{1}{2}\partial_u(\log fh)$ the mean curvature of $F_u$ in $M$.
For any spinor $\sigma$ over the piece of the tube $U$, the spin Laplacian writes as
\begin{equation}\Delta_s^M\sigma=\Delta_s^F\sigma+[D^F,\nabla^M_{\partial_ u}]\sigma
-(\nabla^M_{\partial_u})^2\sigma+(\partial_uH-H^2)\sigma+2H\nabla^M_{\partial_u}\sigma.\end{equation}
\end{lemma}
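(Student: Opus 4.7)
The plan is to decompose the Dirac operator $D^M$ on the tube relative to the foliation by the fibers $\{F_u\}$, show that it admits the compact form $D^M=\gamma^M(\partial_u)\bigl(\nabla^M_{\partial_u}+D^F-H\bigr)$, and then obtain the Laplacian formula by squaring this expression and exploiting a few commutation and anticommutation rules.

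First I would take the orthonormal frame $\{\partial_u,\bar{e}_1,\bar{e}_2\}$ of $TM|_U$ with $\bar{e}_1:=f^{-1}\partial_t$ and $\bar{e}_2:=h^{-1}\partial_\theta$, and write
\begin{equation*}
D^M=\gamma^M(\partial_u)\nabla^M_{\partial_u}+\sum_{\alpha=1}^{2}\gamma^M(\bar{e}_\alpha)\nabla^M_{\bar{e}_\alpha}.
\end{equation*}
The induced Dirac bundle structure on $F_u$ from Theorem \ref{DiracBoundary} has Clifford multiplication $\gamma^F(X)=-\gamma^M(\partial_u)\gamma^M(X)$, and its spin connection $\nabla^F$ differs from $\nabla^M$ restricted to $TF_u$ by a shape-operator term given by the spinorial Gauss--Weingarten formula. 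Rewriting $\gamma^M(X)=\gamma^M(\partial_u)\gamma^F(X)$ on $TF_u$ and summing over $\alpha$, the tangential piece collapses to $\gamma^M(\partial_u)D^F$ plus a scalar contribution coming from $\sum_\alpha\gamma^F(\bar{e}_\alpha)\gamma^F(S(\bar{e}_\alpha))=-\tr(S)$. In the warped-product metric this trace equals $-2H$, so
\begin{equation*}
D^M=\gamma^M(\partial_u)\bigl(\nabla^M_{\partial_u}+D^F-H\bigr).
\end{equation*}

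Next I would square this expression. The key inputs are $\gamma^M(\partial_u)^2=-\mathbb{1}$; the commutation $[\gamma^M(\partial_u),\nabla^M_{\partial_u}]=0$, a consequence of $\nabla^M_{\partial_u}\partial_u=0$ since $\partial_u$ is a unit geodesic field; the anticommutation $\gamma^M(\partial_u)D^F=-D^F\gamma^M(\partial_u)$, which follows directly from the definition of $\gamma^F$; and the fact that $H$ depends only on $u$, so $D^FH=HD^F$ while $\nabla^M_{\partial_u}H=(\partial_uH)+H\nabla^M_{\partial_u}$ as operators. Using these rules to move the leftmost $\gamma^M(\partial_u)$ past the inner factor, one computes
\begin{equation*}
(D^M)^2=-\bigl(\nabla^M_{\partial_u}-D^F-H\bigr)\bigl(\nabla^M_{\partial_u}+D^F-H\bigr),
\end{equation*}
and a straightforward expansion of this product, in which the mixed $HD^F$ terms cancel and the mixed $\nabla^M_{\partial_u}$--$D^F$ terms assemble into $[D^F,\nabla^M_{\partial_u}]$, produces exactly the five contributions on the right-hand side of the lemma.

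The genuine obstacle is the spinorial Gauss--Weingarten step in the conventions of the paper. One must verify that the induced Dirac bundle structure of Theorem \ref{DiracBoundary} carries a spin connection differing from $\nabla^M|_{TF_u}$ by $\tfrac{1}{2}\gamma^F(S(\cdot))$ and that the resulting trace is $\tr(S)=\partial_u\log(fh)=-2H$, consistent with $H=-\tfrac{1}{2}\partial_u\log(fh)$ in the warped metric $du^2+\cosh^2(R-u)\,dt^2+\sinh^2(R-u)\,d\theta^2$. The adapted frame $\{s_1,\dots,s_l\}$ of Proposition \ref{prop53}, for which $\nabla^M_{\partial_u}s_k=0$ and $\nabla^{F_u}s_k=0$, is exactly the right computational tool here, since it reduces the verification of these intertwining identities to a check on coefficient functions. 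Once the decomposition $D^M=\gamma^M(\partial_u)(\nabla^M_{\partial_u}+D^F-H)$ is secured, the identity of the lemma follows from the purely algebraic squaring outlined above.
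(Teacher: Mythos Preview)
Your argument is correct and follows essentially the same route as the paper: both start from the hypersurface decomposition $D^M=\gamma(\partial_u)(\nabla^M_{\partial_u}+D^F-H)$ and then square, using $\gamma(\partial_u)^2=-\mathbb{1}$, $\nabla^M_{\partial_u}\partial_u=0$, and the anticommutation of $\gamma(\partial_u)$ with $D^F$. The only difference is that the paper simply quotes this decomposition from B\"ar \cite{Ba96} rather than re-deriving it via the spinorial Gauss--Weingarten formula, and then expands $D^MD^M$ term by term instead of first commuting $\gamma(\partial_u)$ through to reach $-(\nabla^M_{\partial_u}-D^F-H)(\nabla^M_{\partial_u}+D^F-H)$; your packaging is slightly cleaner but the content is identical.
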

\begin{proof}
According to B\"ar \cite{Ba96} the Dirac operator  on the tube writes as
\begin{equation}D^M\sigma=\gamma(\partial_u)D^F\sigma-H\gamma(\partial_u)\sigma+\gamma(\partial_u)\nabla^M_{\partial_u}\sigma.\end{equation}
So, for the spin Laplacian we have
\begin{equation}\begin{split}
\Delta_s\sigma
&=D^MD^M\sigma=D^M(\gamma(\partial_u)D^F\sigma-H\gamma(\partial_u)\sigma+\gamma(\partial_u)\nabla^M_{\partial_u}\sigma)=\\
&=\gamma(\partial_u)D^F(\gamma(\partial_u)D^F\sigma-H\gamma(\partial_u)\sigma+\gamma(\partial_u)\nabla^M_{\partial_u}\sigma)+\\
&\quad-H\gamma(\partial_u)(\gamma(\partial_u)D^F\sigma-H\gamma(\partial_u)\sigma+\gamma(\partial_u)\nabla^M_{\partial_u}\sigma)+\\
&\quad+\gamma(\partial_u)\nabla^M_{\partial_u}(\gamma(\partial_u)D^F\sigma-H\gamma(\partial_u)\sigma+\gamma(\partial_u)\nabla^M_{\partial_u}\sigma)=\\
&=(D^F)^2\sigma+(D^F\nabla^M_{\partial_u}-\nabla^M_{\partial_u}D^F)\sigma-(\nabla^M_{\partial_u})^2\sigma+(\partial_u H-H^2)\sigma+\\
&\quad +2H\nabla^M_{\partial_u}\sigma,
\end{split}\end{equation}which is the assertion of the lemma. We used of course that\begin{equation}\nabla^M_{\partial_u}\partial_u=0\end{equation} and that \begin{equation}-\gamma(\partial_u)\nabla_{\partial_u}^M(H\gamma(\partial_u)\sigma)=\partial_uH\sigma+H\nabla_{\partial_u}^M\sigma.\end{equation}
\end{proof}

\begin{proposition}\label{prop55}Let $c_{ri}:=\left<\gamma^F(f^{-1}\partial_t)s_r,s_i\right>$ and $d_{ri}:=\left<\gamma^F(h^{-1}\partial_\theta)s_r,s_i\right>$ for $1\le r,i \le l$.
Let $\{s_1,\dots,s_l\}$ be the local o.n. frame for the spinor bundle over the piece of tube $U$ defined in Proposition \ref{prop53}. Under the decomposition $\sigma=\sum_{k=1}^l\sigma^ks_k$ , we obtain the following equivalences
\begin{enumerate}
\item[(i)] Eigenvalue equation:
\begin{equation}\begin{split}(\Delta_s-\lambda)\sigma=0&\Leftrightarrow(\Delta_0-\lambda)\sigma^k-\partial_u^2\sigma^k+(\partial_uH-H^2)\sigma^k+ \\
&\quad+2H\partial_u\sigma^k-\sum_{i\neq k}\left[(f\partial_u(f^2)+\partial_uf)c_{ik}\partial_t\sigma^i\right.+\\
&\quad\left.-(h\partial_u(h^2)+\partial_uh)d_{ik}\partial_\theta\sigma^i\right]=0\\
&\quad(1\le k\le l). \end{split}
\end{equation}
\item[(ii)] Absolute boundary condition:
\begin{equation}
\begin{split}(B_-\sigma)|_{\partial U}=0,&\quad (B_-D^M\sigma)|_{\partial U}=0\\
&\Leftrightarrow\\
\sigma^k=0,\quad(\partial_u-H)\sigma^{k+\frac{l}{2}}=0&\quad(u=r_0,R_0 \text{ and } 1\le k \le\frac{l}{2}).
\end{split}
\end{equation}
\end{enumerate}
\end{proposition}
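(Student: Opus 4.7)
The proof is a direct computation combining Lemma \ref{lemma54} with the privileged local frame $\{s_1,\dots,s_l\}$ of Proposition \ref{prop53}, along which both $\nabla^{F_u}$ and $\nabla^M_{\partial_u}$ vanish. Write $\sigma=\sum_k\sigma^k s_k$ and expand each of the five terms of Lemma \ref{lemma54} in this frame.

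For (i), the two radial pieces collapse to $-\partial_u^2\sigma^k+2H\partial_u\sigma^k$ componentwise, and the scalar term $(\partial_u H-H^2)\sigma$ is already diagonal. A short preliminary step shows that the matrix elements $c_{ri},d_{ri}$ are constants on $U$: on each fiber they are constant because $\nabla^{F_u}$ annihilates both $\{s_r\}$ and the orthonormal frame $\{e_1,e_2\}$, while the warped-product Christoffel computation gives $\nabla^M_{\partial_u}e_1=\nabla^M_{\partial_u}e_2=0$, so that $\partial_u c_{ri}=\partial_u d_{ri}=0$. With $c,d$ constant, the Clifford relations $\gamma^F(e_1)^2=\gamma^F(e_2)^2=-\mathbb{1}$ and $\gamma^F(e_1)\gamma^F(e_2)+\gamma^F(e_2)\gamma^F(e_1)=0$ translate into matrix identities that collapse the cross terms in $(D^F)^2$ and leave $\Delta_s^F\sigma=\sum_k\Delta_0\sigma^k\, s_k$, with $\Delta_0:=-(f^{-2}\partial_t^2+h^{-2}\partial_\theta^2)$ the scalar Laplacian on the flat torus $F_u$. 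Finally $[D^F,\nabla^M_{\partial_u}]\sigma$ only retains the $u$-derivatives of the prefactors $f^{-1}$ and $h^{-1}$ that sit inside $D^F$, which produce the off-diagonal coupling via $c_{ik}$ and $d_{ik}$ with $i\neq k$ at the stated coefficients. Summing the four contributions and matching with $\lambda\sigma$ componentwise yields the system in (i).

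For (ii), property (iii) of Proposition \ref{prop53} says that $B_-$ is the orthogonal projection onto $\operatorname{span}\{s_1,\dots,s_{l/2}\}$, so $B_-\sigma|_{\partial U}=0$ is equivalent to $\sigma^k|_{\partial U}=0$ for $1\le k\le l/2$. For the second condition, substitute the B\"ar factorisation $D^M\sigma=\gamma^M(\partial_u)(D^F\sigma-H\sigma+\nabla^M_{\partial_u}\sigma)$ recorded inside the proof of Lemma \ref{lemma54} and use the intertwining $B_-\gamma^M(\partial_u)=\gamma^M(\partial_u)B_+$ to obtain
\begin{equation*}
B_-D^M\sigma|_{\partial U}=\gamma^M(\partial_u)\,B_+(D^F\sigma-H\sigma+\nabla^M_{\partial_u}\sigma)|_{\partial U}.
\end{equation*}
A direct Clifford-algebra check yields $T\gamma^M(\partial_u)\gamma^F(v)=-\gamma^F(v)T\gamma^M(\partial_u)$ for every $v\in TF$, so $\gamma^F$ interchanges the $B_\pm$-eigenspaces; consequently the nonzero entries of $c,d$ occur only between the two halves of the frame, and on $\partial U$, where $\sigma$ lies in $\operatorname{span}\{s_{l/2+1},\dots,s_l\}$, the section $D^F\sigma|_{\partial U}$ lies in $\operatorname{span}\{s_1,\dots,s_{l/2}\}$, whence $B_+D^F\sigma|_{\partial U}=0$. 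Combining this with $B_+\sigma|_{\partial U}=\sigma|_{\partial U}$ and $B_+\nabla^M_{\partial_u}\sigma=\sum_{k>l/2}(\partial_u\sigma^k)s_k$, and invoking the invertibility of $\gamma^M(\partial_u)$, the boundary condition reduces to $(\partial_u-H)\sigma^{k+l/2}=0$ on $\partial U$ for $1\le k\le l/2$.

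The principal obstacle is the bookkeeping in (i): verifying rigorously that $c_{ri}, d_{ri}$ are truly constant on $U$ and then organising the Clifford identities so that $(D^F)^2$ diagonalises into $\Delta_0$ on each component while $[D^F,\nabla^M_{\partial_u}]$ contributes precisely the advertised off-diagonal couplings, with the right combinations of $\partial_u f$, $\partial_u h$ and $f,h$ prefactors. Once these identifications are in place, part (ii) follows directly from the frame description of $B_\pm$ together with the commutation rule $B_\pm\gamma^M(\partial_u)=\gamma^M(\partial_u)B_\mp$ and the fact that $\gamma^F$ swaps the two halves of the frame.
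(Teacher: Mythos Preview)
Your argument is correct and follows the same overall strategy as the paper: expand $\sigma$ in the parallel frame of Proposition~\ref{prop53}, compute each of the five terms in Lemma~\ref{lemma54} componentwise, and read off the boundary conditions from the frame description of $B_\pm$. There are only two minor points of divergence worth noting. First, the paper establishes the vanishing of the diagonal entries $c_{rr}=d_{rr}=0$ by an integration-by-parts argument on subdomains $G\subset F_{u_0}$ using Green's formula for $D^{F_{u_0}}$, whereas you obtain it (inside your treatment of (ii)) from the Clifford anticommutation $T\gamma^M(\partial_u)\gamma^F(v)=-\gamma^F(v)T\gamma^M(\partial_u)$, which forces $\gamma^F(e_j)$ to swap the two halves of the frame; this is an equally valid and arguably cleaner route, though you invoke it only in (ii) while it is already needed in (i) to justify that the commutator term is purely off-diagonal. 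Second, you put some effort into proving that $c_{ri},d_{ri}$ are constant on $U$; the paper neither states nor uses this (the formula in (i) is pointwise), so that step, while correct, is not required for the proposition as written. For (ii) the paper simply says ``by direct insertion'', and your explicit reduction via $B_-\gamma^M(\partial_u)=\gamma^M(\partial_u)B_+$ and $B_+D^F\sigma|_{\partial U}=0$ spells out exactly what that insertion amounts to.
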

\begin{proof}
We insert the decomposition $\sigma=\sum_{k=1}^l\sigma^ks_k$, where $\sigma^k=\sigma^k(u,t,\theta)$, in the equation $\Delta_s\sigma=\lambda\sigma$.
 We represent $\Delta_s$ using Lemma \ref{lemma54}:
\begin{equation}\Delta_s^M\sigma=\Delta_s^F\sigma+[D^F,\nabla^M_{\partial_u}]\sigma-(\nabla^M_{\partial_u})^2\sigma+(\partial_u H-H^2)\sigma+2H\nabla^M_{\partial_u}\sigma.\end{equation}
Using the properties of the local o.n. frame $s_1,\dots,s_l$ described in the Proposition \ref{prop53}, we find:
\begin{equation}
\begin{split}
&D^F\sigma=\sum_{k=1}^l\gamma^F(\grad^F\sigma^k)s_k,  \\
&\nabla^M_{\partial_u}\sigma=\sum_{k=1}^l(\partial_u\sigma_k)s_k,  \\
&\Delta^F_s\sigma=\sum_{k=1}^l(\Delta_0\sigma^k)s_k,  \\
&(\nabla^M_{\partial_u})^2\sigma=\sum_{k=1}^l(\partial^2_u\sigma^k)s_k,  \\
&[D^F,\nabla^M_{\partial_u}]\sigma=\sum_{k=1}^l\gamma^F([\grad^F,\nabla^M_{\partial_u}]\sigma^k)s_k,
\end{split}
\end{equation}
\noindent and for any function $\varphi$
\begin{equation}
\begin{split}
[\grad^F,\nabla^F_{\partial_u}]\varphi=&-(f\partial_u(f^2)+\partial_uf)\partial_t\varphi f^{-1}\partial_t+\\
&-(h\partial_u(h^2)+\partial_uh)\partial_\theta\varphi h^{-1}\partial_\theta.
\end{split}
\end{equation}
\noindent Therefore,
\begin{equation}
\begin{split}
&[D^F,\nabla^M_{\partial_u}]\sigma=-\sum_{i,k=1}^l\{(f\partial_u(f^2)+\partial_uf)\underbrace{\left<\gamma^F(f^{-1}\partial_t)s_i,s_k\right>}_{=c_{ik}}\partial_t\sigma^i+  \\
&\qquad\qquad\qquad+(h\partial_u(h^2)+\partial_uh)\underbrace{\left<\gamma^F(h^{-1}\partial_\theta)s_i,s_k\right>}_{=d_{ik}}\partial_\theta\sigma^i\}s_k.
\end{split}
\end{equation}
Remark that $c_{rr}=d_{rr}\equiv 0$. In fact, $\nabla^Fs_r=0$ and thus $D^Fs_r=0$. Therefore for any $u_0\in[r_0,R_0]$ and any open $G\subset U_{u_0}\subset F_{u_0}$:
\begin{equation}(\underbrace{D^{F_{u_0}}s_r}_{=0},s_r)=(s_r,\underbrace{D^{F_{u_0}}s_r}_{=0})-\int_{\partial G}\left<\gamma^F(\nu)s_r,s_r\right>\dvol_{\partial G}.\end{equation}
If we denote by $\alpha:=g(\nu,f^{-1}\partial_t)$ and by $\beta:=g(\nu,h^{-1},\partial_{\theta})$, we obtain for all $G$:
\begin{equation}\int_{\partial G}(\alpha c_{rr}+\beta d_{rr})\dvol_{\partial G}=0\end{equation} Therefore, $c_{rr}=d_{rr}=0$.
Thus, the statement (i) follows. Statement (ii) follows by direct insertion and the properties of the frame $\{s_1,\dots,s_l\}$.\\
\end{proof}
Since we are primarly interested in the first few eigenvalues, and the equations and the boundary conditions are linear, we can choose for any $i\neq k$ $\;\sigma^i:=0$. Therefore we obtain for the lower absolute eigenvalues:
\begin{equation}
{\small
\begin{cases}(\Delta_0-\lambda)\sigma^k-\partial_u^2\sigma^k+(\partial_uH-H^2)\sigma^k+2H\partial_u\sigma^k=0.\\
(\partial_u-H)\sigma^k=0\quad(u=r_0,R_0)\end{cases}(\frac{l}{2}+1\le k \le l)
}
\end{equation}
or
\begin{equation}
{\small
\begin{cases}(\Delta_0-\lambda)\sigma^k-\partial_u^2\sigma^k+(\partial_uH-H^2)\sigma^k+2H\partial_u\sigma^k=0\\
\sigma^k=0\quad(u=r_0,R_0)\end{cases}(1\le k \le \frac{l}{2}).
}
\end{equation}
We are going now to explicitly determine a regular discrete resolution for the function Laplacian $\Delta_0$ on $\mathbf{R}^2$ with metric given in polar coordinates by\\ $f^2(u)dt^2+h^2(u)d\theta^2$ under the periodicity conditions $a(t,\theta)=a(t+\varepsilon,\theta+\rho)$ and $a(t,\theta)=a(t,\theta+2\pi)$ for all $(t,\theta)$.
\begin{lemma}\label{lemma56}
A regular spectral decomposition of $\Delta_0$ on $F_u$ is given by  $(g_i,\kappa_i)_{i\in\mathbf{Z}^2}$ i.e. $\Delta_0 g_i=\kappa_i g_i$ for all $i\in \mathbf{Z}^2$ and $(g_i)_{i\in\mathbf{Z}^2}$ is an o.n.b. of $L^2(F_u,\mathbf{C})$, where for $i=(r,s)\in\mathbf{Z}^2$:
\begin{equation}
\begin{split}
g_i&=g_i(u,t,\theta)=\frac{e^{\imath[(2\pi s+r\rho)\frac{t}{\varepsilon}-r\theta]}}{\sqrt{2\pi\varepsilon fh(u)}}\\
\kappa_i&=\kappa_i(u)=\frac{(2\pi s+r\rho)^2}{f^2(u)\varepsilon^2}+\frac{r^2}{h^2(u)}.
\end{split}
\end{equation}
\end{lemma}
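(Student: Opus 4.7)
The plan is to verify the claim by direct calculation, reducing everything to elementary Fourier analysis on a flat torus. First I would realize $F_u$ as $\mathbf{R}^2/L$ with the lattice $L:=\mathbf{Z}(\varepsilon,\rho)+\mathbf{Z}(0,2\pi)$ of covolume $2\pi\varepsilon$, so that $[0,\varepsilon)\times[0,2\pi)$ serves as a fundamental domain (with the vertical edges glued by a $\rho$-twist in $\theta$). I would then verify that each $g_i$ descends to $F_u$ by checking invariance under the two lattice generators: under $(t,\theta)\mapsto(t,\theta+2\pi)$ the phase of $g_i$ shifts by $-2\pi r\in 2\pi\mathbf{Z}$, and under $(t,\theta)\mapsto(t+\varepsilon,\theta+\rho)$ it shifts by $(2\pi s+r\rho)-r\rho=2\pi s\in 2\pi\mathbf{Z}$, so in both cases $g_i$ is preserved.

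Next, since on $F_u$ the metric coefficients $f(u),h(u)$ are constants (depending only on the fixed radial parameter), the Laplace--Beltrami operator simplifies to $\Delta_0=-f^{-2}\partial_t^2-h^{-2}\partial_\theta^2$. Direct differentiation gives $\partial_t^2 g_i=-(2\pi s+r\rho)^2\varepsilon^{-2}g_i$ and $\partial_\theta^2 g_i=-r^2 g_i$, which at once yields $\Delta_0 g_i=\kappa_i g_i$ with $\kappa_i$ as claimed.

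For orthonormality, with volume form $d\mathrm{vol}_{F_u}=fh\,dt\,d\theta$ the prefactor $(2\pi\varepsilon fh(u))^{-1/2}$ cancels the metric weight, and integration over the fundamental domain factors: the $\theta$-integral is $\int_0^{2\pi}e^{i(r-r')\theta}d\theta=2\pi\delta_{rr'}$, and with $r=r'$ the $t$-integral reduces to $\int_0^\varepsilon e^{2\pi i(s'-s)t/\varepsilon}dt=\varepsilon\delta_{ss'}$. Completeness is standard Fourier theory on flat tori: the dual lattice of $L$ is $L^*=\{((2\pi s+r\rho)/\varepsilon,-r)\,:\,(r,s)\in\mathbf{Z}^2\}$, and the characters $e^{i\langle\xi,\cdot\rangle}$ with $\xi\in L^*$ form a complete orthonormal system in $L^2(F_u,\mathbf{C})$; the $g_i$ are precisely these characters with the stated normalization. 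The whole argument is essentially bookkeeping; the only point requiring care is correctly identifying the dual lattice, which is what forces the index set to be $\mathbf{Z}^2$ and pins down the particular form of the eigenvalues $\kappa_i$.
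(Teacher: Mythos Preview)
Your proof is correct and arguably cleaner than the paper's. The paper proceeds by separation of variables: it posits $a(t,\theta)=A(t)B(\theta)$, solves the resulting pair of ODEs for $A$ and $B$, and then imposes the two periodicity conditions $a(t,\theta)=a(t,\theta+2\pi)$ and $a(t,\theta)=a(t+\varepsilon,\theta+\rho)$ to quantize the parameters, arriving at the same $g_i$ and $\kappa_i$; completeness is then simply asserted. You instead verify the stated formulas directly by realizing $F_u$ as $\mathbf{R}^2/L$, checking lattice invariance, and applying $\Delta_0=-f^{-2}\partial_t^2-h^{-2}\partial_\theta^2$ to confirm the eigenvalue equation. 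Your route has two advantages: since the lemma already hands you the answer, direct verification is more economical than rederiving it, and your appeal to the dual lattice makes completeness a genuine consequence of standard Fourier theory on flat tori rather than a bare assertion. The paper's separation-of-variables derivation, on the other hand, shows how one would discover the eigenfunctions without knowing them in advance.
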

\begin{proof}
 We have to solve the partial differential equation
\begin{equation}\Delta_0a=\kappa a\end{equation} for an unknown function of two variables $a=a(t,\theta)$, satisfying the periodicity conditions
\begin{equation}a(t,\theta)=a(t+\varepsilon,\theta+\rho)\quad\text{and}\quad a(t,\theta)=a(t,\theta+2\pi)\quad\text{for all $(t,\theta)$.}\end{equation}
The Ansatz $a(t,\theta)=A(t)B(\theta)$ inserted in $\Delta_0a=\kappa a$ leads to two ordinary differential equations
\begin{align}
&A^{\prime\prime}+f^2\mu A=0 \\
&B^{\prime\prime}+h^2\nu B=0,
\end{align}
and $a:=AB$ is then a solution of the originary PDE with $\kappa=\mu+\nu$. In view of the periodicity conditions, we ignore the cases where $\mu<0$ or $\nu<0)$ and find that $A(t)=e^{\imath f\sqrt{\mu}t}$ and $B(\theta)=e^{\imath h\sqrt{\nu}\theta}$ are solutions. We insert $a(t,\theta):=A(t)B(\theta)$ in the second periodicity condition to obtain
\begin{equation}\nu=\frac{r^2}{h^2}\quad\text{for $r\in\mathbf{Z}$}\end{equation}
and in the first
\begin{equation}\mu=\frac{(2\pi s+r\rho)^2}{\varepsilon^2 f^2}\quad\text{for a $s\in\mathbf{Z}$.}\end{equation}
The eigenvalues are therefore, setting $i:=(r,s)\in\mathbf{Z}^2$,
\begin{equation}\kappa_i=\kappa_i(u)=\mu+\nu=\frac{(2\pi s+r\rho)^2}{f^2(u)\varepsilon^2}+\frac{r^2}{h^2(u)}\end{equation} and the eigenfunctions
\begin{equation}a_i=a_i(t,\theta)=e^{\imath[(2\pi s+r\rho)\frac{t}{\varepsilon}-r\theta]}.\end{equation}
To get an o.n. sequence we normalize as follows:
\begin{equation}g_i(u,t,\theta):=\frac{a_i}{\|a_i\|}.\end{equation}
Since\begin{equation}\|a_i\|^2_{L^2(F_u,\mathbf{C})}=2\pi fh(u),\end{equation} we find $g_i$ as claimed.  The sequence $(g_i)_{i\in\mathbf{Z}^2}$ is an o.n.b. in $L^2(F_u,\mathbf{C})$.\\
\end{proof}
\noindent By direct verification we obtain the following
\begin{lemma}\label{lemma57}
The eigenfunctions $(g_i)_{i\in\mathbf{Z}^2}$ of $\Delta_0$ have the following properties under derivation:
\begin{equation}
{\small
\begin{array}{ll}
\partial_ug_i=-\frac{1}{2}\partial_u(\log(fh))g_i &\partial_u^2g_i=[-\frac{1}{2}\partial_u^2(\log(fh))+\frac{1}{4}(\partial_u(\log(fh))^2]g_i \\
  & \\
\partial_tg_i=\imath\frac{2\pi s+r\rho}{\varepsilon}g_i &\partial_t^2g_i=-\left(\frac{2\pi s+r\rho}{\varepsilon}\right)^2g_i  \\
 & \\
\partial_\theta g_i=-\imath rg_i &\partial_\theta^2g_i=-r^2g_i.
\end{array}
}
\end{equation}
\end{lemma}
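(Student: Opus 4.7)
The plan is that since $g_i$ has the separable form $g_i(u,t,\theta) = A(u)\,B(t)\,C(\theta)$ with
\begin{equation*}
A(u) = \frac{1}{\sqrt{2\pi\varepsilon fh(u)}}, \qquad B(t) = e^{\imath(2\pi s + r\rho)t/\varepsilon}, \qquad C(\theta) = e^{-\imath r\theta},
\end{equation*}
each of the six identities reduces to a one-variable elementary differentiation, so the proof is a direct verification. I would open by writing out this factorization explicitly and then handle the $t$, $\theta$, and $u$ derivatives in turn.

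The $t$ and $\theta$ derivatives are immediate: only the relevant complex exponential factor depends on the differentiation variable, and differentiating once pulls down the coefficients $\imath(2\pi s + r\rho)/\varepsilon$ and $-\imath r$ respectively, while differentiating twice squares these (picking up an extra $-1$). This reads off the four identities in the right-hand column and the second and fourth identities in the left column.

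For $\partial_u g_i$, I would take the logarithmic derivative of the radial factor: $\log A(u) = -\tfrac{1}{2}\log(2\pi\varepsilon) - \tfrac{1}{2}\log(fh(u))$, hence $A'(u)/A(u) = -\tfrac{1}{2}\partial_u\log(fh)$, giving $\partial_u g_i = -\tfrac{1}{2}\partial_u\log(fh)\cdot g_i$. For the second derivative, I differentiate this identity once more using the product rule,
\begin{equation*}
\partial_u^2 g_i = -\tfrac{1}{2}\partial_u^2\log(fh)\cdot g_i - \tfrac{1}{2}\partial_u\log(fh)\cdot \partial_u g_i,
\end{equation*}
and substitute the first-order formula back in, which yields the stated $\bigl[-\tfrac{1}{2}\partial_u^2\log(fh) + \tfrac{1}{4}(\partial_u\log(fh))^2\bigr] g_i$.

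There is no genuine obstacle here: the statement is a bookkeeping lemma collecting the derivative formulae needed to expand the eigenvalue equation of Proposition 6.3 along the radial and fiber directions of the tube. The only minor point worth flagging is the identity $A''/A = (A'/A)' + (A'/A)^2$ used in the second radial derivative; everything else is immediate from the separation of variables.
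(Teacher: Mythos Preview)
Your proposal is correct and is precisely the ``direct verification'' the paper invokes (the paper gives no further details beyond that phrase). The only blemish is expository: your sentence about ``the four identities in the right-hand column and the second and fourth identities in the left column'' miscounts the $3\times 2$ layout, but the actual computations you carry out cover all six formulae correctly.
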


\noindent Now we decompose the $k$-th coordinate function of the spinor $\sigma$ in its Fourier serie w.r.t. the o.n.b $(g_i)_{i\in\mathbf{Z}^2}$ of $L^2(F_u,\mathbf{C})$ found in Lemma \ref{lemma56}
\begin{equation}\sigma^k=\sum_{i\in\mathbf{Z}^2}a_i^kg_i\end{equation}where $a_i$ is the $u$-dependent $i$th Fourier coefficient.
We insert this decomposition in the boundary value problems found at the end of the preceding subsection and drop the $k$ superscript, because they all have the same form, independently of what $k\in\{1,\dots,l\}$ we consider. Using the properties of all the $g_i$s under derivation, listed in Lemma \ref{lemma57}, we obtain
\begin{equation} \begin{cases}\sum_{i\in\mathbf{Z}^2}[-a_i^{\prime\prime}+(\kappa_i-\lambda)a_i]g_i=0\\\sum_{i\in\mathbf{Z}^2}[a_i^{\prime}-(\log fh)^\prime a_i]g_i=0\quad(u=r_0,R_0)\end{cases}
\end{equation}
or
\begin{equation}
\begin{cases}\sum_{i\in\mathbf{Z}^2}[-a_i^{\prime\prime}+(\kappa_i-\lambda)a_i]g_i=0\\\sum_{i\in\mathbf{Z}^2}a_ig_i=0\qquad\qquad(u=r_0,R_0).\end{cases}
\end{equation}
All the equations are satisfied, if and only if  all the Fourier coefficients vanish. This leads to the following two families of $1$-dimensional  boundary value problems:
\begin{equation}\label{abs1}
\begin{cases}-a_i^{\prime\prime}+(\kappa_i-\lambda)a_i=0\\a_i^{\prime}-(\log fh)^\prime a_i=0\quad(u=r_0,R_0)\end{cases} \quad(i\in\mathbf{Z}^2).\\
\end{equation}
and
\begin{equation}\label{abs2}
\begin{cases}-a_i^{\prime\prime}+(\kappa_i-\lambda)a_i=0\\a_i=0\quad(u=r_0,R_0)\end{cases}\quad(i\in\mathbf{Z}^2).
\end{equation}

\begin{remark}\label{remk}
 All the eigenvalues of the original absolute eigenvalue equations for the piece of the tube are eigenvalues of these two families of $1$-dimensional boundary value problems, but not viceversa. In fact to get the eigenspinors for $\lambda$ on each $O$ (cf. Remark \ref{remark61}), it suffices to take the restrictions of an eigenspinor for $\lambda$ on all $U$ to $O$. The converse procedure does not work in general, because eigenspinors for the same eigenvalue $\lambda$ on different open subsets of $U$ do not necessarily need to match on the overlaps.\par
 The prominent example is $\lambda=O\left(\frac{1}{R_0-r_0}\right)$ corresponding to the choice $i=0\in\mathbf{Z}^2$ and $\kappa_0=0$, which is not an absolute eigenvalue of $D^M$ on $U$, because the restriction of the the spin structure on $M$ to the torus $F_u$ is non trivial, i.e., it does not admit harmonic spinors, as explained in \cite{Ba00}
\end{remark}

The boundary value problems for $i\neq0\in\mathbf{Z}^2$ give rise to eigenvalues, which are bounded away from $0$ uniformly w.r.t. $R$. We insert $f(u)=\cosh(R-u)$ and $h(u)=\sinh(R-u)$ and set for any $i\neq0\in\mathbf{Z}^2$
\begin{equation}
q_i(u):=\kappa_i(u)-\lambda=\frac{(2\pi s+r\rho)^2}{\cosh^2(R-u)\varepsilon^2}+\frac{r^2}{\sinh^2(R-u)}-\lambda\;\,\text{for $i=(r,s)$.}\end{equation}
Recall from Section \ref{curva} that $\varepsilon$, $\rho$, $R$ can't be arbitrarily chosen but have instead to satisfy the inequalities (\ref{ineqTube})
for positive constants $D_{1,2}$ and $E_{1,2}$. This fact implies a certain behaviour for the eigenvalues $\kappa_i(u)$ of the function Laplacian of the tube fibers $F_u$. There exists a positive constants $S$ such that for every $R\ge S$ and every $i\in\dot{\mathbf{Z}^2}$
\begin{equation}\kappa_i(u)\ge {\left(\frac{E_1}{D_2}\,e^{r_0}\right)}^2\qquad\forall u\in[r_0,R].\end{equation}
If we choose $r_0$ big enough, then for any $i\in\dot{\mathbf{Z}^2}$ $q_i(u)\ge 5-\lambda$ on $[r_0,R_0]$. Let us choose $R_0:=R-1$. If $\lambda<1$ is an absolute eigenvalue, solution of (\ref{abs1}), then there is a non trivial solution $a_i$ and $q_i(u)>4$ on $[r_0,R-1]$. By Proposition \ref{propA1} one has
\begin{equation}
\liminf_{R\rightarrow+\infty}\frac{a^\prime_i(R-1)}{a_i(R-1)}\ge1>0.
 \end{equation}
But this contradicts the absolute boundary condition at $u=R-1\rightarrow+\infty$ (as $R\rightarrow+\infty$), because
\begin{equation}\frac{a_i^\prime(R-1)}{a_i(R-1)}=-(\tanh(1)+\tanh^{-1}(1))<0.\end{equation}
If $\lambda<1$ is an absolute eigenvalue, solution of (\ref{abs2}), then there is a non trivial solution $a_i$ and $q_i(u)>4$ on $[r_0,R-1]$. By Proposition \ref{propA2} one has
\begin{equation}
a_i(R-1)\neq 0.
 \end{equation}
But this contradicts the absolute boundary condition at $u=R-1$, because
\begin{equation}a_i(R-1)=0.\end{equation}
The conclusion is that there are two positive constants $S$ and $r_0$ such that for all $R\ge S$, any absolute eigenvalue  must be greater than or equal to $1$. In terms of the sequence of pieces of tubes converging to a cusp this means
\begin{proposition}\label{prop58}
There exist an integer $j_0\in\mathbf{N}_0$ and a positive constant $r_0$ such that $\forall j\ge j_0,\;\forall n\ge 0$
\begin{equation}\lambda_1((\Delta_{s_j}^{(U_j,g_j)})_{B_+})\ge 1,\end{equation}
where $U_j:=T([r_0,R_j-1])$ is the relevant piece of tube.
\end{proposition}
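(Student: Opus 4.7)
The plan is to consolidate the analysis carried out in the preceding subsection into a uniform lower bound. First, using the local orthonormal frame $\{s_1,\ldots,s_l\}$ from Proposition \ref{prop53} and the Fourier basis $(g_i)_{i\in\mathbf{Z}^2}$ on the torus fiber $F_u$ from Lemma \ref{lemma56}, I would reduce the absolute eigenvalue problem $(\Delta_{s_j}-\lambda)\sigma = 0$ with $B_{-}\sigma|_{\partial U_j} = B_{-} D^{M_j}\sigma|_{\partial U_j} = 0$ to the two decoupled families of one-dimensional Sturm--Liouville boundary value problems (\ref{abs1}) and (\ref{abs2}), indexed by $i\in\mathbf{Z}^2$ and the spinor component $k$. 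This reduction is already carried out componentwise in the text preceding the statement, so only its uniform-in-$j$ version needs to be recorded.

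Next I would discard the zero mode $i=0$, which would otherwise produce eigenvalues of order $O(1/(R_j-1-r_0))$ tending to zero. As stressed in Remark \ref{remk}, the restriction of the spin structure of $M_j$ to each torus fiber $F_u\subset T_j$ is the non-trivial one (this is the point taken from \cite{Ba00}), and hence $F_u$ carries no harmonic spinors. Consequently the Fourier expansion of any absolute eigenspinor has vanishing $i=0$ coefficient in every component, and the analysis can be restricted to $i\in\dot{\mathbf{Z}^2}:=\mathbf{Z}^2\setminus\{0\}$. This is the topological heart of the argument and the main obstacle: the estimate fails for trivial spin structures on the fibers, and we need to know that the non-trivial one occurs uniformly along the degenerating sequence.

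For $i\in\dot{\mathbf{Z}^2}$ I would exploit the inequalities (\ref{ineqTube}) to bound $\varepsilon_j,\rho_j$ from above and below by multiples of $e^{-2R_j}$ and $e^{-R_j}$, respectively. Substituting into
\[
\kappa_i(u)=\frac{(2\pi s+r\rho_j)^2}{\cosh^2(R_j-u)\,\varepsilon_j^2}+\frac{r^2}{\sinh^2(R_j-u)} \qquad (i=(r,s)),
\]
one obtains the existence of $S>0$ such that for every $R_j\ge S$, every $i\in\dot{\mathbf{Z}^2}$, and every $u\in[r_0,R_j-1]$,
\[
\kappa_i(u)\ge \left(\frac{E_1}{D_2}\,e^{r_0}\right)^{2}.
\]
Then I would choose $r_0$ so large that the right-hand side is at least $5$, and $j_0$ so that $R_j\ge S$ for all $j\ge j_0$. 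With these choices the potential $q_i(u):=\kappa_i(u)-\lambda$ in both (\ref{abs1}) and (\ref{abs2}) satisfies $q_i\ge 4$ on $[r_0,R_j-1]$ for every $\lambda<1$.

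Finally, suppose for contradiction that some absolute eigenvalue $\lambda<1$ exists on $U_j$ for $j\ge j_0$. Some $i\in\dot{\mathbf{Z}^2}$ then admits a non-trivial solution $a_i$ of (\ref{abs1}) or (\ref{abs2}) with $q_i>4$ throughout $[r_0,R_j-1]$. By Proposition A1, in the first case $\liminf_{R\to\infty}a_i'(R-1)/a_i(R-1)\ge 1$, which contradicts the Robin-type boundary value $a_i'(R-1)/a_i(R-1)=-(\tanh 1+\tanh^{-1}1)<0$ dictated by $(\log fh)'|_{u=R_j-1}$; by Proposition A2, in the second case $a_i(R-1)\neq 0$, contradicting the Dirichlet condition $a_i(R-1)=0$. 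Either way we obtain a contradiction, so $\lambda\ge 1$ for every absolute eigenvalue, giving $\lambda_1\ge 1$ and completing the proof.
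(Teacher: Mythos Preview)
Your proposal is correct and follows essentially the same route as the paper: reduction to the one-dimensional problems (\ref{abs1})--(\ref{abs2}) via Proposition~\ref{prop53} and Lemma~\ref{lemma56}, elimination of the $i=0$ mode through the non-triviality of the induced spin structure on the torus fibers (Remark~\ref{remk}), the uniform lower bound $\kappa_i(u)\ge(E_1e^{r_0}/D_2)^2$ from (\ref{ineqTube}), and the contradiction for $\lambda<1$ via Propositions~\ref{propA1} and~\ref{propA2}. The only cosmetic difference is that the paper presents this argument in the paragraphs immediately preceding the proposition rather than packaging it as a separate proof.
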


\subsection{Proof of the Lower Bound Inequality}\label{sketch}
We first sketch  the structure of the proof of Theorems
\ref{spinors3Manifold}. We can assume without loss of generality
that $M$ has only one cusp. We apply Theorem \ref{Cheeger} to the extrinsic Dirac operator as in Proposition \ref{propT} noting that
the spectral bound holds true for the intrinsic Dirac operator as well, because the spectra of both extrinsic and intrinsic Dirac Laplacians, possibly under the absolute boundary condition, are the same.
\begin{enumerate}
\item For every $j\ge 0$, we cover its approximating manifold $M_j$
with three $0$-codimensional submanifolds with boundary:
\begin{enumerate}
\item $W_j\supset(M_j)_{]\mu,\infty[}\cup\left\{x\in(M_j)_{]0,\mu]}\,|\,R_j\ge\dist(x,{\gamma}_j)\ge
R_j-r_0\right\}$: a compact neighborhood of the thick part of
$M_j$.
\item $U_j\supset\left\{x\in (M_j)_{]0,\mu]}\,|\,R_j-r_0\ge\dist(x,{\gamma}_j)\ge
1\right\}$: a relevant piece of the tube (a solid annular torus).
\item $V_j\supset\left\{x\in
(M_j)_{]0,\mu]}\,|\,1\ge\dist(x,{\gamma}_j)\right\}$: a tubular
neighborhood of the closed geodesic (a solid torus).
\end{enumerate}
The submanifolds can be chosen as the closure of a $\varepsilon$ neighbourhood (for a fixed small $\varepsilon$ ) of the sets specified on the right hand side. The constant $r_0>0$ is chosen according to Proposition \ref{prop58}. 
\item We compute the spectral bound given by Theorem \ref{Cheeger}.
\item We control the spectra of the bounded parts $W_j$ and $V_j$  under the absolute boundary conditions
      using spectral perturbation theory.
\item Since the metric of the tube converge to the metric on
the cusp, the lower eigenvalues of $P$ on the piece of cusp for
the absolute  boundary conditions converge to the the lower
eigenvalues of $P$ on $U_j$ under the absolute boundary condition.
\end{enumerate}

\begin{proof}[Proof of Theorem \ref{spinors3Manifold}]
Following the steps above we apply Theorem \ref{Cheeger} for the cover of the manifold, for which we have $N_1=N_2=0$, $N=1$ to obtain

\begin{equation}
\begin{split}
\lambda_1^2(P) \ge C_1&\left\{\frac{1}{\mu(W_j)}+\frac{1}{\mu(U_j)}+\frac{1}{\mu(V_j)}+\right.\\
&+4\left(\frac{C_j}{\mu(W_j\cap U_j)}+1\right)\left(\frac{1}{\mu(W_j)}+\frac{1}{\mu(U_j)}\right)+\\
&\left.+4\left(\frac{C_j}{\mu(U_j\cap V_j)}+1\right)\left(\frac{1}{\mu(U_j)}+\frac{1}{\mu(V_j)}\right)\right\}^{-1},
\end{split}
\end{equation}
\noindent for a $C_1>0$ and constants $C_j>0$ depending on the $C^1$ norm of a partition of unity subordinate to the chosen cover (cf. Theorem \ref{Cheeger}). The constants $C_j$ are bounded from above by a constant $C_2>0$. Now, we examine the different eigenvalues involved:
\begin{itemize}
\item The eigenvalues $\mu(W_j)$ and $\mu(W_j\cap U_j)$ are bounded from below by a positive constant independent of $j$ because $W_j$ converges to a closed $\varepsilon$ neighbourhood of the thick part $M_{\text{thick}}$, which is compact.
\item By Proposition \ref{prop58} there is a $j_0\in\mathbf{N}_0$ such that the eigenvalue $\mu(U_j)\ge 1$ for all $j\ge j_0$.
\item The eigenvalues $\mu(V_j)$ and $\mu(U_j\cap V_j)$ are uniformly bounded from below by a positive constant independent of $j$, because $V_j$ and $U_j\cap V_j$ are bounded.
\end{itemize}
\noindent We conclude that there exist a positive constant $c>0$ such that
\begin{equation}
\lambda_1^2(P) \ge c,
\end{equation}
and the proof is completed.
\end{proof}
\begin{remark}We can mimick this proof for the Laplace-Beltrami operator on $1$-forms and reobtain Jammes's result stated in Theorem \ref{thm43}. The only essential difference is that the first non zero eigenvalue on the tube converges to zero as the inverse of the square of the diameter as the manifold degenerates.
\end{remark}
\appendix
\section{Some Results about Second Order Boundary Value Problems}
\begin{proposition}\label{propA1}
Let the function $a=a(u)$ be a non trivial solution of the linear
second order boundary value problem
\begin{equation}
\begin{cases}
-a^{\prime\prime}+qa=0 \\ a^{\prime}(m_0)+\alpha a(m_0)=0,
\end{cases}
\end{equation} where $q\in C^\infty([m_0,m_1])$ is a smooth function satisfying $q>k^2$ for constants $k, \alpha\in \mathbf{R}$
such that $k>0$ and $\alpha\le k$. Then, for the unique solution $v$
of the initial value problem
\begin{equation}
\begin{cases}-v^{\prime\prime}+k^2v=0\\
v(m_0)=a(m_0)\\
v^{\prime}(m_0)=a^{\prime}(m_0)
\end{cases}
\end{equation}
the following inequality holds on $[m_0,m_1]$:
\begin{equation}\label{comp}
\frac{a^{\prime}}{a}\ge\frac{v^{\prime}}{v}.
\end{equation}
In particular
\begin{equation}\label{nobvp}
\liminf_{m_1\rightarrow+\infty}\frac{a^\prime(m_1)}{a(m_1)}\ge\frac{1}{2}k.
\end{equation}
\end{proposition}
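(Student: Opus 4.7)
The plan is to prove the pointwise comparison \eqref{comp} and the asymptotic bound \eqref{nobvp} by analyzing the Riccati equations satisfied by the logarithmic derivatives of $a$ and $v$. I would set $w := a'/a$ and $\tilde w := v'/v$, which satisfy $w' = q - w^2$ and $\tilde w' = k^2 - \tilde w^2$ respectively, and which share the initial value $w(m_0) = \tilde w(m_0) = -\alpha$ by the Robin condition at $m_0$ together with the matched initial data defining $v$.

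For the comparison \eqref{comp} I would subtract the two Riccati equations to get
\[
(w - \tilde w)' = (q - k^2) - (w + \tilde w)(w - \tilde w), \qquad (w - \tilde w)(m_0) = 0,
\]
and apply a maximum-principle argument: at any putative first point $u_1 > m_0$ at which $w - \tilde w$ were to decrease through zero, the ODE forces $(w - \tilde w)'(u_1) = q(u_1) - k^2 > 0$, contradicting the assumed decrease. Hence $w \ge \tilde w$ on $[m_0, m_1]$, which is \eqref{comp}. To avoid the singularity of $w$ or $\tilde w$ where $a$ or $v$ vanishes, I would reformulate the same comparison through the Wronskian $W := av' - a'v$, which satisfies $W' = (k^2 - q)\,av$ with $W(m_0) = 0$ and hence $W \le 0$ on each component of $\{av > 0\}$; this is equivalent to \eqref{comp}. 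Sturm disconjugacy applied to $-y'' + qy = 0$ (with $q > 0$) ensures $a$ has at most one zero on $[m_0, \infty)$, so only finitely many components must be tracked.

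For the liminf \eqref{nobvp} I would argue directly from the Riccati equation for $w$. The hypothesis $\alpha \le k$ gives $w(m_0) = -\alpha \ge -k$, so $w(m_0)^2 \le k^2 < q(m_0)$ and therefore $w'(m_0) > 0$; more generally $w$ is strictly increasing at every instant where $|w| \le k$, since then $|w| < \sqrt{q}$. Suppose for contradiction that $w \le k/2$ on all of $[m_0, \infty)$; monotonicity then gives a limit $w_\infty \le k/2$, and $w' \to 0$ would force $q \to w_\infty^2 \le k^2/4$, contradicting $q > k^2$. Hence there is a finite $u_\star$ with $w(u_\star) \ge k/2$. Once at this level, the trapping identity $w'|_{w = k/2} = q - k^2/4 > 3k^2/4 > 0$ forbids any first subsequent downcrossing, so $w \ge k/2$ for all $u \ge u_\star$, which is \eqref{nobvp}.

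The main obstacle I expect is the bookkeeping around a possible zero of $a$ in $[m_0, \infty)$, where $w = a'/a$ is singular. The Wronskian reformulation keeps the pointwise comparison \eqref{comp} clean across such a zero, while Sturm disconjugacy restricts their number to at most one; past such a zero, the Riccati equation shows that $w$ reappears at $+\infty$ and then decreases toward $\sqrt{q} > k > k/2$, so the liminf bound is preserved unchanged.
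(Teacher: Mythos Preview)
Your approach is correct. For the comparison \eqref{comp} it essentially coincides with the paper's: the paper also works with the Wronskian (in the form $a'v-av'=-W$), derives $(a'v-av')'=(q-k^2)av$ with initial value zero, and runs a continuity argument that simultaneously forces $a\ge v>0$ and $a'v-av'\ge 0$ on the whole ray. In particular neither $a$ nor $v$ ever vanishes under the hypotheses, so your Sturm-disconjugacy contingency for a possible zero of $a$ turns out to be unnecessary, though harmless.

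The genuine difference is in \eqref{nobvp}. The paper writes $v$ in closed form, computes $v'/v\to k$ as $u\to\infty$, and reads off \eqref{nobvp} directly from \eqref{comp}. Your direct Riccati analysis of $w=a'/a$ is a valid alternative, but two small repairs are needed. First, from $w$ increasing and bounded you only get $\liminf w'=0$ along a subsequence (since $\int w'<\infty$ and $w'\ge 0$), not $w'\to 0$. Second, the bound $w_\infty^2\le k^2/4$ is unjustified: you only know $w_\infty\in(-k,\,k/2]$, hence $w_\infty^2<k^2$. This weaker bound already suffices, since along that subsequence $q(u_n)-w(u_n)^2\to 0$ forces $q(u_n)\to w_\infty^2<k^2$, contradicting $q>k^2$. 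With these corrections your argument goes through and in fact handles the borderline case $\alpha=k$ (where $v'/v\equiv -k$ and the paper's route via $v$ does not directly yield \eqref{nobvp}) more cleanly than the paper's.
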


\begin{proof}
 Without loss of generality we can prove the inequality on $[m_0,m_1[$ and choose $m_0:=0$ and $m_1=+\infty$. We need to distinguish several cases:

\begin{description}

\item [{\sc case 0}:] $a(0)=0$ never occurs. In fact, both cases $a(0)=0$ and $a^{\prime}(0)=0$
are excluded by the assumption on the non triviality of $a$ and by
the existence and uniqueness theorem for the solutions of ordinary
differential equations.

\item[{\sc case 1}:] $a(0)>0$.\\
Since $a^{\prime}(0)=-\alpha a(0)>-ka(0)$, we obtain
$v(u)=a(0)\cosh(ku)+\frac{a^{\prime}(0)}{k}\sinh(ku)>0$ $\forall
u\in[0,+\infty[$. With $w:=a^{\prime}v-av^{\prime}$ it follows
$w^{\prime}=(q-k^2)av$, $w(0)=0$ and
$w^{\prime}(0)=(q(0)-k^2)v^2(0)>0$. So,
$\varepsilon_1:=\sup\left\{u\in]0,+\infty[\;|\;w^{\prime}>0\;\text{on}\;]0,u[\right\}$
must belong to $]0,+\infty]$. If $\varepsilon_1<+\infty$, then by
continuity $w^{\prime}(\varepsilon_1)=0$.\\ Analogously, since
$a(0)>0$,
$\varepsilon_2:=\sup\left\{u\in]0,+\infty[\;|\;a>0\;\text{on}\;]0,u[\right\}$
must be in $]0,+\infty]$. If $\varepsilon_2<+\infty$, then by
continuity $a(\varepsilon_2)=0$. Set
$\varepsilon:=\min\{\varepsilon_1,\varepsilon_2\}$. On
$[0,\varepsilon[$ one has $w\ge0$, i.e.
$\frac{a^{\prime}}{a}\ge\frac{v^{\prime}}{v}$, being $a$ and $v$
positive. Integrating both sides of this inequality , one gets $a\ge
v$ on $[0,\varepsilon[$. So, on this interval one has
$w^{\prime}(u)=(q-k^2)a(u)v(u)\ge(q(u)-k^2)v^2(u)=(q(u)-k^2)a^2(0)\cosh^2(ku)$
and $a(u)\ge v=a(0)\cosh(ku)$. Assume now that $\varepsilon<\infty$.
There are two possibilities: if $\varepsilon=\varepsilon_1$, then by
continuity
$w^{\prime}(\varepsilon_1)=(q(\varepsilon_1)-k^2)a^2(0)\cosh^2(k\varepsilon_1)>0$;
if $\varepsilon=\varepsilon_2$, again by continuity
$a(\varepsilon_2)\ge a(0)\cosh(k\varepsilon_2)>0$. In both cases
there is a contradiction, so it must be $\varepsilon=\infty$. We
therefore come to the conclusion that
$\frac{a^{\prime}}{a}\ge\frac{v^{\prime}}{v}$ on $[0,+\infty[$.

\item[{\sc case 2}:] $a(0)<0$.\\
We set $\bar{a}:=-a$ and $\bar{v}:=-v$. Case 1 leads to
$\frac{\bar{a}^{\prime}}{\bar{a}}\ge\frac{\bar{v}^{\prime}}{\bar{v}}$
on $[0,+\infty[$, which means
$\frac{a^{\prime}}{a}\ge\frac{v^{\prime}}{v}$ on the same interval.

\end{description}
\noindent By solving the initial value problem for $v$, we can
determine $v$ and $v^\prime$ explicitly:
\begin{equation}
\begin{split}
v(u)&=a(m_0)\cosh(k(u-m_0))+\frac{a^{\prime}(m_0)}{k}\sinh(k(u-m_0))  \\
 &  \\
v^{\prime}(u)&=ka(m_0)\sinh(k(u-m_0))+a^{\prime}(m_0)\cosh(k(u-m_0)).  \\
\end{split}
\end{equation}
Since $v(u)\neq0$ for $u\in[m_0,m_1]$ we can write:
\begin{equation}
\frac{v^{\prime}(u)}{v(u)}=\,k\,\frac{a(m_0)\tanh(k(u-m_0))+\frac{1}{k}a^{\prime}(m_0)}{a(m_0)+\frac{1}{k}a^{\prime}(m_0)\tanh(k(u-m_0))}.
\end{equation}
We insert the boundary condition $a^{\prime}(m_0)+\alpha a(m_0)=0$
and simplify by $a(m_0)\neq0$:
\begin{equation}
\frac{v^{\prime}(u)}{v(u)}=\,k\frac{\tanh(k(u-m_0))-\frac{\alpha}{k}}{1-\frac{\alpha}{k}\tanh(k(u-m_0))}.
\end{equation}
Since $k>\alpha$, we obtain
\begin{equation}
\lim_{u\rightarrow+\infty}\frac{v^{\prime}(u)}{v(u)}=k
\end{equation}
 and the inequality (\ref{nobvp}) follows from the estimate (\ref{comp}).\\
\end{proof}

\begin{proposition}\label{propA2}
Let the function $a=a(u)$ be a non trivial solution of the linear
second order boundary value problem
\begin{equation}
\begin{cases}
-a^{\prime\prime}+qa=0 \\ a(m_0)=0,
\end{cases}
\end{equation} where $q\in C^\infty([m_0,m_1])$ is a smooth function satisfying $q>k^2$ for a constant $k>0$.
Then, for the unique solution $v$ of the initial value
problem
\begin{equation}
\begin{cases}-v^{\prime\prime}+k^2v=0\\
v(m_0)=0\\
v^{\prime}(m_0)=a^{\prime}(m_0)
\end{cases}
\end{equation}
the following inequality holds on $]m_0,m_1]$:
\begin{equation}
\frac{a^{\prime}}{a}\ge\frac{v^{\prime}}{v}.
\end{equation}
There exist $\delta>m_0$ such that
\begin{equation}\label{nobvp0}
\begin{split}
a(u)&\ge a(\delta)e^{\frac{k(u-\delta)}{2}}>0\quad(a^{\prime}(m_0)>0)\\
a(u)&\le
a(\delta)e^{\frac{k(u-\delta)}{2}}<0\quad(a^{\prime}(m_0)<0).
\end{split}
\end{equation}
\end{proposition}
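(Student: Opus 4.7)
The plan is to mimic the Sturm--Picone comparison used in the proof of Proposition \ref{propA1}, with the crucial adaptation that now both $a$ and $v$ vanish at the left endpoint $m_0$, so the quantity $w:=a'v-av'$ starts with a triple zero rather than first-order behaviour. First I would write down $v$ explicitly: since $-v''+k^2v=0$, $v(m_0)=0$, $v'(m_0)=a'(m_0)$, one has
\begin{equation}
v(u)=\frac{a'(m_0)}{k}\sinh\bigl(k(u-m_0)\bigr),
\end{equation}
so that $v$ has the sign of $a'(m_0)$ on all of $]m_0,m_1]$, and
\begin{equation}
\frac{v'(u)}{v(u)}=k\coth\bigl(k(u-m_0)\bigr)>k\quad\text{for all }u\in\,]m_0,m_1].
\end{equation}

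Without loss of generality assume $a'(m_0)>0$ (the other case follows by switching to $\bar a:=-a$ and $\bar v:=-v$, just as in the proof of Proposition \ref{propA1}). Define the Wronskian type function $w:=a'v-av'$. A direct computation using $a''=qa$ and $v''=k^2v$ gives $w'=(q-k^2)av$ together with $w(m_0)=0$. Let $\varepsilon_2:=\sup\{u\in\,]m_0,m_1]\mid a>0\text{ on }]m_0,u[\}$, which is strictly larger than $m_0$ because $a'(m_0)>0$. On $]m_0,\varepsilon_2[$ both $a$ and $v$ are strictly positive, hence
\begin{equation}
w(u)=\int_{m_0}^{u}(q(s)-k^2)a(s)v(s)\,ds>0,
\end{equation}
and equivalently, dividing by $v^2>0$, the function $F:=a/v$ satisfies $F'=w/v^2>0$ on $]m_0,\varepsilon_2[$. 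Because $a(m_0)=v(m_0)=0$ and $a'(m_0)=v'(m_0)$, L'Hôpital yields $\lim_{u\downarrow m_0}F(u)=1$, so $F(u)>1$ on $]m_0,\varepsilon_2[$, i.e.\ $a(u)>v(u)>0$ there. This forbids $\varepsilon_2<m_1$ (otherwise continuity would force $0=a(\varepsilon_2)\ge v(\varepsilon_2)>0$), so $\varepsilon_2=m_1$ and both $a>0$, $w\ge0$ throughout $]m_0,m_1]$. Since $a,v>0$, the inequality $w\ge0$ rewrites as $a'/a\ge v'/v$, which is the first assertion.

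For the exponential lower bound, combine the comparison with the explicit formula for $v'/v$: on all of $]m_0,m_1]$,
\begin{equation}
\frac{a'(u)}{a(u)}\ge\frac{v'(u)}{v(u)}=k\coth\bigl(k(u-m_0)\bigr)>k>\frac{k}{2}.
\end{equation}
Picking any $\delta\in\,]m_0,m_1]$, integrating $(\log a)'\ge k/2$ from $\delta$ to $u$ gives $a(u)\ge a(\delta)\exp(k(u-\delta)/2)$, which is the claimed estimate in the case $a'(m_0)>0$. The case $a'(m_0)<0$ follows by applying the first case to $\bar a$ and reversing the sign.

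The main obstacle I anticipate is the behaviour at $m_0$: unlike in Proposition \ref{propA1}, both $a$ and $v$ and all three of $w$, $w'$, $w''$ vanish there, so the positivity of $w$ just above $m_0$ cannot be read off from a first-order Taylor expansion. The clean way around this is the L'Hôpital computation showing $F=a/v\to1$ as $u\downarrow m_0$, which replaces the role played by the pointwise comparison $a\ge v$ in the proof of Proposition \ref{propA1} and allows the continuation argument to go through without having to analyse higher derivatives of $w$ at $m_0$.
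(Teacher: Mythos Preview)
Your proof is correct. The Wronskian comparison $w=a'v-av'$ with $w'=(q-k^2)av$ and the L'H\^opital computation $a/v\to1$ at $m_0$ is a clean way to handle the common zero of $a$ and $v$ at the left endpoint, and the exponential bound follows at once from $a'/a\ge k\coth(k(u-m_0))>k/2$.

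The paper takes a different, shorter route: in the case $a'(m_0)>0$ it picks $\delta>m_0$ small enough that $a(\delta)>0$ and $a'(\delta)>0$, observes that then $\alpha:=-a'(\delta)/a(\delta)<0<k$, and simply invokes Proposition~\ref{propA1} on $[\delta,m_1]$. This is more economical in that it recycles the previous result wholesale, but as written it only yields the comparison on $[\delta,m_1]$ and with the auxiliary solution matching $a$ at $\delta$, not with the $v$ of the statement on all of $]m_0,m_1]$. Your direct argument actually establishes the inequality $a'/a\ge v'/v$ with the stated $v$ on the full half-open interval, which is the stronger assertion; for the application in the main text only $a(m_1)\neq0$ is needed, and both routes deliver that. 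One minor remark: the exponential bound you obtain by integrating $(\log a)'\ge k/2$ is valid for $u\ge\delta$, which is all that is required and matches the intended use of the proposition.
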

\begin{proof}
Without loss of generality we can prove the inequality on
$[m_0,m_1[$ and choose $m_0:=0$ and $m_1=+\infty$. We need to
distinguish several cases:

\begin{description}

\item [{\sc case 0}:] $a^{\prime}(0)=0$ never occurs. Cf. {\sc case 0} in the
proof of Proposition \ref{propA1}.

\item[{\sc case 1}:] $a^{\prime}(0)>0$.\\
There exist a $\delta>0$ small enough such that
$a^{\prime}(\delta)>0$ and $a(\delta)>0$. Note that $\alpha
:=-\frac{a^{\prime}(\delta)}{a(\delta)}< k$. We can continue by
applying Proposition \ref{propA1} and obtain the result stated.
\item[{\sc case 2}:] $a(0)<0$.\\
Analogously to {\sc case 2} in the proof of Proposition
\ref{propA1}.

\end{description}
\end{proof}

\end{document}